\documentclass[english]{article}
\usepackage{amsmath}
\usepackage{amsthm}
\usepackage{amssymb}
\usepackage{authblk}
\usepackage{color}
\usepackage{babel}
\usepackage{mathrsfs}
\usepackage{graphicx}
\usepackage[bookmarks=true,bookmarksnumbered=false,bookmarksopen=false,
 breaklinks=false,pdfborder={0 0 1},backref=false,colorlinks=true]
 {hyperref}

\makeatletter
\ifx\proof\undefined
\newenvironment{proof}[1][\protect\proofname]{\par
	\normalfont\topsep6\p@\@plus6\p@\relax
	\trivlist
	\itemindent\parindent
	\item[\hskip\labelsep\scshape #1]\ignorespaces
}{%
	\endtrivlist\@endpefalse
}
\providecommand{\proofname}{Proof}
\fi

\@ifundefined{date}{}{\date{}}
\ifdefined\showcaptionsetup
 \PassOptionsToPackage{caption=false}{subfig}
\fi
\usepackage{subfig}
\makeatother

\theoremstyle{plain}
\newtheorem{thm}{\protect\theoremname}[section]
\newtheorem{cor}[thm]{\protect\corollaryname}
\theoremstyle{remark}
\newtheorem{rem}[thm]{\protect\remarkname}
\theoremstyle{plain}
\newtheorem{prop}[thm]{\protect\propositionname}
\newtheorem{lem}[thm]{\protect\lemmaname}
\theoremstyle{remark}

\providecommand{\claimname}{Claim}
\providecommand{\corollaryname}{Corollary}
\providecommand{\lemmaname}{Lemma}
\providecommand{\propositionname}{Proposition}
\providecommand{\remarkname}{Remark}
\providecommand{\theoremname}{Theorem}

\title{Long-time stability of hierarchical point vortex configurations}

\author[a]{Slim Ibrahim}
\author[b]{Shengyi Shen}

\affil[a]{Department of Mathematics and Statistics, University of Victoria, Canada}
\affil[b]{School of Mathematics and Statistics, Ningbo University, China}

\date{}

\begin{document}
\maketitle

\begin{abstract}
In this paper, we investigate the dynamics and long-time stability of hierarchical configurations in an $N$-point vortex system on $\mathbb{R}^2$ and in simply connected domains, without the Cantor-set restriction inherent in KAM theory. Since the Hamiltonian perturbation lacks a standard power-series expansion in the usual normalized variables, we develop a modified Birkhoff normal form based on successive ratios of adjacent variables, tailored to the hierarchical structure. A key feature is the preservation of the resulting natural exponent structure throughout the normal form procedure. Consequently, for $n$ sufficiently large, the hierarchical configuration persists for times of order at least $\varepsilon^{-\left(n-2(N-1)\right)}$.
\end{abstract}

\section{Introduction}

The point vortex system is one of the most classical models and influential finite dimensional reductions in fluid mechanics. Beyond their intrinsic relevance to hydrodynamics, point vortex systems have also become fundamental examples in Hamiltonian dynamics, geometric mechanics, and singular perturbation theory. 
A central theme in vortex dynamics is the existence and stability of coherent structures, particularly relative equilibria, namely vortex configurations evolving by rigid uniform rotation or translation~\cite{aref2003vortex,newton2001n}. Although classical work focused on symmetric configurations such as Thomson polygons~\cite{thomson1883treatise}, recent studies have revealed rich geometric families and bifurcation structures of asymmetric relative equilibria, including a detailed classification for the planar four-vortex problem~\cite{kallyadan2025geometries}. Finiteness of stationary configurations has been established for the planar four-vortex problem~\cite{yu2023finiteness}. For the planar five-vortex problem, the corresponding result holds for generic vortex strengths~\cite{yu2025finiteness}. Beyond rigid motions, self-similar collapse and expansion configurations have also been studied through a linear-algebraic formulation of the configuration equations~\cite{kallyadan2022selfsimilar}. Such expanding point-vortex configurations have further been used to construct global solutions of the two-dimensional Euler equation consisting of well-separated vortex patches~\cite{zbarsky2021point}.

A prominent focus in recent years is the study of hierarchical or multiscale vortex structures, where vortices form nested clusters interacting at different spatial scales~\cite{wayne2012}. These configurations present delicate dynamical features due to resonant interactions between internal cluster motions and global collective dynamics. Understanding their long-term persistence requires tools capable of capturing these multiscale resonances.

This paper investigates the long-time stability of broad classes of hierarchical configurations for the $N$-point vortex problem in both the plane and bounded simply connected domains. The core novelty is the systematic application of Jacobi-type hierarchical coordinates combined with high-order adapted Birkhoff normal form techniques~\cite{birkhoff1927dynamical, giorgilli2002notes}. 

Although Birkhoff normal forms are standard for near-integrable Hamiltonian systems, their application to vortex dynamics poses unique challenges: the logarithmic interaction potential creates strong geometric degeneracies, nested clusters produce complicated frequency resonances, and boundary effects introduce nontrivial couplings. We overcome these obstacles by employing hierarchical Jacobi coordinates adapted~\cite{lim1998jacobi}, which naturally separate the center-of-mass motion from internal cluster oscillations, partially diagonalizing the leading-order interactions. 

A natural approach to study the stability consists of expanding the dynamics about a prescribed reference trajectory. Since this generally produces a nonautonomous Hamiltonian system, with periodic or quasiperiodic couplings persisting even in a co-rotating frame, the classical autonomous Birkhoff normal form near a fixed equilibrium is not directly applicable. Instead, after suitable symplectic reductions, we retain the autonomous dominant Hamiltonian, exploit successive ratios of adjacent Jacobi variables, and solve a modified homological equation to construct high-order normal forms in the hierarchical regime while controlling resonant interactions across cluster scales. This yields stability for a full neighborhood of hierarchical motions and shows that configurations initially close to a hierarchically stable relative equilibrium remain close for polynomially long times, with the exponent determined by the normalization order.



At this point, it is natural to compare our framework with KAM approaches to hierarchical vortex systems. The existence of perpetual quasi-periodic motions in such configurations was first established by Khanin~\cite{khanin1982}, and more recently, Xiong and Liu~\cite{liu2023} derived explicit nondegeneracy conditions for KAM tori in planar $N$-point vortex systems. KAM theory provides invariant quasi-periodic tori for all time for a Cantor family of initial data satisfying nonresonance conditions. By contrast, our main theorem establishes quantitative stability over polynomially long times for all initial data satisfying the hierarchical assumptions stated below, without imposing a Diophantine condition. The underlying normal form scheme is not restricted to this particular hierarchy and can be adapted to broader multiscale configurations, as illustrated by the examples in Section~\ref{sec:app}. In the infinite-dimensional Euler setting, KAM methods have also produced time-quasi-periodic vortex patches near uniformly rotating Kirchhoff ellipses~\cite{berti2023time}; see also~\cite{berti2024kam} for a survey.

Our methodology draws inspiration from celestial mechanics, where Jacobi coordinates and normal forms are central to the stability theory of hierarchical $N$-body systems~\cite{meyer1992introduction}. However, the noncanonical symplectic structure and logarithmic potential of the vortex problem generate distinct resonance phenomena that require specialized treatment. 

The paper is organized as follows: Section 2 reviews the Hamiltonian formulation and introduces some preliminary work. Section 3 develops the formal normal form procedure. The rigorous Birkhoff theorem is proved in Section 4, followed by stability estimates (Section 5). Examples of nested configurations conclude the paper (Section 6).

\subsection{The setup and main results}
In the classical Birkhoff Normal Form theory, the unperturbed Hamiltonian
$h_{0}$ is a quadratic polynomial, making the homological operator
$\left\{ \cdot,h_{0}\right\} $ strictly degree-preserving. Consequently,
the homological equation can be solved independently within each subspace
of homogeneous polynomials. However, the presence of logarithmic singularities
in our $h_{0}$ induces a non-homogeneous behavior. The Poisson bracket
$\left\{ \cdot,h_{0}\right\} $ breaks the classical algebraic bahaviours
and coupling terms of different orders. Another algebra difficulty
is that the perturbation is not even a power series in normalized
variable $\left(\boldsymbol{\alpha},\bar{\boldsymbol{\alpha}}\right)$.
We introduce a new variable $\left(\boldsymbol{\beta},\bar{\boldsymbol{\beta}}\right)$
based on the successive ratios of $\left(\boldsymbol{\alpha},\bar{\boldsymbol{\alpha}}\right)$
that enable us to rewrite the perturbation as power series. Although the Poisson
bracket $\left\{ \cdot,h_{0}\right\} _{\mathcal{S}}$ still output
inhomogeneous polynomials in $\left(\boldsymbol{\beta},\bar{\boldsymbol{\beta}}\right)$,
one can solve the homological equation by just adding a higher order
term. 

Let $A_{k}=\sum^{k}_{l=1}a_{l}$ where $a_{l}$ are vortex strength.
The classical Jacobi coordinates (see Fig \ref{fig:jacobi_coordinates})
are defined by
\begin{align*}
w_{k} & =\frac{\sum^{k}_{l=1}a_{l}z_{l}}{A_{k}}-z_{k+1},k=1,2,\cdots,N-1,\\
w_{N} & =\frac{\sum^{N}_{l=1}a_{l}z_{l}}{A_{N}}.
\end{align*}
A key feature of Jacobi coordinates is their ability to effectively
describe relative particle motions. 
\begin{figure}
\begin{centering}
\includegraphics[scale=0.4]{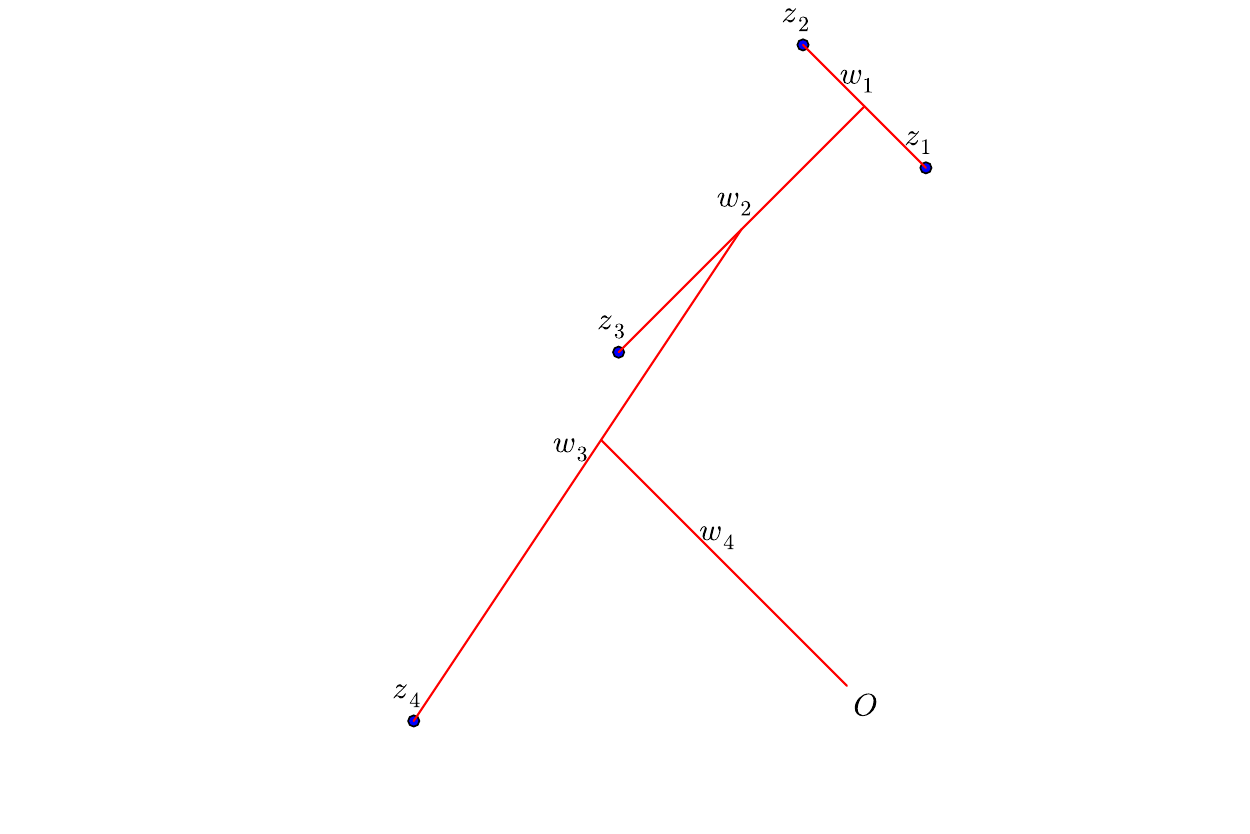}
\par\end{centering}
\caption{Jacobi coordinates for $N=4$.\label{fig:jacobi_coordinates}}

\end{figure}

Qualitatively, a hierarchical configuration consists of nested vortex
clusters on well-separated spatial scales: for each $k=2,\ldots,N-1$,
the vortices $1,\ldots,k$ form a cluster whose diameter is much smaller
than its distance from the $(k+1)$-st vortex. In Jacobi coordinates,
this scale separation is expressed by
$|w_1(0)|\ll\cdots\ll|w_{N-1}(0)|$. The precise
$\varepsilon$-dependent assumptions are stated in
Theorem~\ref{thm:main_result}, which shows that this hierarchy persists
for polynomially long times.
\begin{thm}
\label{thm:main_result}Given a simply connected domain $\Omega\subset\mathbb{R}^{2}$
with $0\in\Omega$ as the stationary point. Let $\varphi:\Omega\to\mathbb{D}$
be biholomorphic such that $\varphi\left(0\right)=0$. Assume that
$A_{k}\neq0,k=1,2,\cdots,N$ and the stability condition is satisfied
\[
2\left|\varphi^{\prime}\left(0\right)\right|^{3}>\left|\varphi^{\prime\prime\prime}\left(0\right)\right|.
\]
Then given $n>2\left(N-1\right)$, there exists $\varepsilon_{0}$ small
enough so that if the initial positions satisfy the following condition:
given $\varepsilon\le\varepsilon_{0}$
\begin{itemize}
\item $\frac{\left|w_{l}\left(0\right)\right|}{\left|w_{l+1}\left(0\right)\right|}=O\left(\varepsilon\right),l=1,2,\cdots,N-2$,
\item $\left|w_{N-1}\left(0\right)\right|=\varepsilon$,
\item $\left|w_{N}\left(0\right)\right|\le\varepsilon$,
\end{itemize}
there is a constant $C_{n}$ independent on $\varepsilon$ such that
\begin{align*}
\left|\left|w_{l}\left(t\right)\right|^{2}-\left|w_{l}\left(0\right)\right|^{2}\right| & \le C_{n}\varepsilon^{2\left(N-l\right)+1},1\le l\le N-1,\\
\left|\left|w_{N}\left(t\right)\right|^{2}-\left|w_{N}\left(0\right)\right|^{2}\right| & \le C_{n}\varepsilon^{3}.
\end{align*}
for $t\le\frac{1}{\varepsilon^{n-2\left(N-1\right)}}$.
\end{thm}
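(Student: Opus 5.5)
We set up the Hamiltonian in Jacobi coordinates and then pass to a normal form. The plan is to write the $N$-vortex Hamiltonian on $\Omega$ via the conformal map $\varphi$, so that the Kirchhoff–Routh function becomes the disk Hamiltonian (logarithmic interaction plus regular part from the Green function and Robin function). We then change to the Jacobi coordinates $w_1,\dots,w_N$, which are symplectic with weights $a_{k+1}A_k/A_{k+1}$ and $A_N$. Expanding around the stationary point $0$ and in the hierarchy $|w_1|\ll\cdots\ll|w_{N-1}|$, the dominant part is
\[
h_0=\sum_{k=1}^{N-1}c_k\log|w_k|^2+c_N|w_N|^2+(\text{quartic twist in }w_N,w_{N-1}).
\]
The logarithmic terms come from the interaction of vortex $k+1$ with the cluster $1,\dots,k$. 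The quadratic term comes from the Robin function near $0$, with coefficients controlled by $\varphi'(0)$ and $\varphi'''(0)$. The stability condition $2|\varphi'(0)|^3>|\varphi'''(0)|$ is exactly what makes the quadratic form in $w_N$ (and in the outer cluster) definite, and hence gives a nonvanishing rotation frequency. We introduce normalized complex variables $\alpha_k$ with $|w_k|^2\sim|\alpha_k|^2$, and then the ratio variables $\beta_1=\alpha_1/\alpha_2,\dots,\beta_{N-2}=\alpha_{N-2}/\alpha_{N-1}$, $\beta_{N-1}=\alpha_{N-1}$, $\beta_N=\alpha_N$. In the $\beta$ variables every multipole expansion of $\log|z_i-z_j|$ for $i,j$ in different shells becomes a convergent power series in $(\boldsymbol\beta,\bar{\boldsymbol\beta})$. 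Under the hypotheses, all $|\beta_k|=O(\varepsilon)$, so $h_0$ depends only on the actions $|\beta_k|^2$ (through $\log$ and quadratic terms), and the perturbation $P$ is a power series starting at a positive order.

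Next we run the modified Birkhoff procedure. At step $r$ we write the non-normalized part of $P$ of $\beta$-degree $r$ as $\sum c_{\mu\nu}\beta^\mu\bar\beta^\nu$ with $\mu\neq\nu$. Since $h_0$ is a function of actions only, $\{\beta^\mu\bar\beta^\nu,h_0\}_{\mathcal S}=i\langle\omega(I),\mu-\nu\rangle\beta^\mu\bar\beta^\nu$. The frequencies $\omega_k(I)$ are of the form $c_k/|w_k|^2$ in the original scaling, which in $\beta$-variables become inhomogeneous, i.e.\ rational in the actions. We solve the homological equation $\{\chi_r,h_0\}=Z_r-P_r$ by dividing monomials by $\langle\omega,\mu-\nu\rangle$. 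This quantity does not vanish because the frequencies are separated by powers of $\varepsilon$ across scales: the hierarchy is itself a strong nonresonance, so no Diophantine set is needed, and the small divisor costs only a bounded power of $\varepsilon$. The inhomogeneity is absorbed by pushing the leftover pieces to higher order, as announced in the setup. The essential bookkeeping is to check that the exponent structure (each monomial carrying the weight $\varepsilon^{\text{degree}}$ dictated by the scaling of the $\beta_k$) is preserved under Poisson brackets with $\chi_r$ and under Lie series. After $n$ steps, the transformed Hamiltonian is $h_0+Z(I)+\mathcal R_n$ with $|\mathcal R_n|\lesssim\varepsilon^{n}$ on a polydisk, using Cauchy estimates on nested domains, as in the rigorous Birkhoff theorem of Section 4.

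For the stability estimates we use the normal form variables, in which each action satisfies $\dot I_k=\{I_k,\mathcal R_n\}$. Rescaling time and variables so that $w_k$ sits at scale $\varepsilon^{N-k}$, the bracket costs at most $\varepsilon^{-2(N-1)}$: the symplectic weight $1/|w_1|^2$ on the innermost shell is $\varepsilon^{-2(N-1)}$. Hence $|\dot I_k|\lesssim\varepsilon^{n-2(N-1)}\cdot(\text{natural size of }I_k)$, and integrating up to $t\le\varepsilon^{-(n-2(N-1))}$ keeps the normal-form actions within a relative $O(\varepsilon)$ of their initial values. A bootstrap argument keeps the trajectory inside the domain of the normal form. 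Transforming back, the near-identity map changes $|w_l|^2$ by a relative $O(\varepsilon)$. Since $|w_l|^2\sim\varepsilon^{2(N-l)}$, this gives the bound $C_n\varepsilon^{2(N-l)+1}$; for $w_N$, the bound is $\varepsilon^3$ from $|w_N|\le\varepsilon$ together with the $O(\varepsilon)$ correction.

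The main obstacle is the preservation of the exponent structure in the second step. The homological divisors $\langle\omega,\mu-\nu\rangle$ mix scales, and brackets of the logarithmic $h_0$ create inhomogeneous terms. The first thing to try is to assign each $\beta_k$ a weight $\varepsilon$ and each divisor an explicit $\varepsilon$-power, and to show by induction that every term generated at step $r$ has total weight at least $r$ minus a fixed loss, the loss totalling $2(N-1)$.
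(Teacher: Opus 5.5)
Your outline has the same skeleton as the paper: Jacobi coordinates, the ratio variables $\boldsymbol\beta$, a modified homological equation whose leftovers are pushed to higher order, a rigorous Birkhoff step on an anisotropic domain, and a bootstrap. But the step you flag as the main obstacle is where the proof actually lives, and the mechanism you suggest for it would fail.

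By \eqref{eq:poly_action}, a bracket of two $\boldsymbol\beta$-monomials can lower the degree by as much as $2(N-1)$. This is the $l=1$ term: differentiating in the innermost variable divides by $|\alpha_1|^2\sim\varepsilon^{2(N-1)}$. The loss can occur at every application of $\mathcal{L}_{g_k}$, so it compounds along the Lie series and over the $n$ steps; it is not a fixed total loss. Giving each $\beta_k$ weight $\varepsilon$ merely reproduces the degree, so it cannot tell harmless monomials from harmful ones. For example, take $N\ge3$ and suppose $p_{k-1}$ contained $|\beta_1|^2\beta_N$. This monomial is non-resonant at level $m=N$ and is its own generating monomial. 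Its bracket with a monomial $f$ having $K'_1\neq L'_1$ produces, via $l=1$, a term of degree $\mathrm{ord}(f)+3-2(N-1)<\mathrm{ord}(f)$. There are also no small divisors to pay for. At level $m\le N-1$ the divisor is dominated by $A_{m+1}(\delta_m(\boldsymbol K)-\delta_m(\boldsymbol L))/(4\pi|\alpha_m|^2)$, which is large and yields the gain $\boldsymbol\beta^{\boldsymbol E_m}\bar{\boldsymbol\beta}^{\boldsymbol E_m}$ in $g_{k,m}$. At level $N$ it is $\omega(\delta_N(\boldsymbol K)-\delta_N(\boldsymbol L))/A_N$, of order one. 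The whole difficulty is in the brackets.

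What the paper supplies, and your proposal lacks, is an invariant class of exponents, the set $\mathscr{C}$. For $(\boldsymbol K,\boldsymbol L)$ at level $m$, the supports of $\boldsymbol K[1:m]$ and of $\boldsymbol K+\boldsymbol L$ are connected, and $K_l=L_l\neq1$ for $l<m$. Three facts are then proved.
\begin{enumerate}
\item The exponents of the true perturbation lie in $\mathscr{C}$. The interaction terms depend only on ratios $\alpha_j/\alpha_k=\beta_j\cdots\beta_{k-1}$. For the regular part $\gamma$ one needs the cancellation of Proposition~\ref{prop:base_structure}: the center of vorticity of vortices $1,\dots,l+1$ does not depend on $w_l$, so an inner $w_l$ never appears to the first power as the leading factor.
\item Brackets with the generating monomials $\boldsymbol\beta^{\boldsymbol K+\boldsymbol E_m}\bar{\boldsymbol\beta}^{\boldsymbol L+\boldsymbol E_m}$ preserve $\mathscr{C}$ (Lemma~\ref{claim:remainder_structure}). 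The condition $K_l\neq1$ is what keeps supports connected after the shift by $\boldsymbol E_m-\boldsymbol E_l$.
\item On $\mathscr{C}$ these brackets raise the order by at least one (Lemma~\ref{claim:oder_increase}). A derivative in $\alpha_l$ with $l<m$ either has vanishing coefficient $\delta_l$, since nothing below the support depends on $\alpha_l$, or comes by connectivity with $|\boldsymbol K|+|\boldsymbol L|\ge2(m-l)+1$, which exactly pays the loss $2(m-l)$.
\end{enumerate}
The same connectivity gives Proposition~\ref{prop:index_inequality}, which makes each $\phi_k$ componentwise near-identity on $U_{\varepsilon,\varepsilon/2}$ in Lemma~\ref{lem:g_domain}. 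Without these ingredients neither $\mathrm{ord}(R_n)\ge n+1$ nor the deformation estimates follow.

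Separately, your time-scale count does not close. Integrating $|\dot I_k|\lesssim\varepsilon^{n-2(N-1)}I_k$ up to $t=\varepsilon^{-(n-2(N-1))}$ gives a relative $O(1)$ drift, not $O(\varepsilon)$. In the paper nothing is lost at this point: $\{R_n,I_l\}_{\mathcal S}$ only multiplies each monomial by a constant times $\delta_l(\boldsymbol K)-\delta_l(\boldsymbol L)$, so $|\dot I_l|\le C_n\varepsilon^{n+1}$. The exponent $n-2(N-1)$ is chosen so that the accumulated drift $\varepsilon^{2N-1}$ stays below $\varepsilon^{2(N-l)+1}$ for every $l$. The binding case is the innermost action $I_1\sim\varepsilon^{2(N-1)}$.
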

For the problem in the whole plane $\mathbb{R}^{2}$, the center of
vorticity $w_N$ is conserved and may be fixed at the origin by a
translation. This normalization concerns only the center of vorticity
and does not require the individual vortices to lie near the origin.
Thus, unlike Theorem~\ref{thm:main_result}, the whole-plane result
requires no $\varepsilon$-dependent smallness assumption on the outer
scale $|w_{N-1}(0)|$; only the hierarchy between successive Jacobi
scales is required.
Noticing that the Hamiltonian now simplifies to
\begin{align*}
H_{\mathbb{R}^{2}} & =\underbrace{-\sum^{N-1}_{k=1}\frac{a_{k+1}A_{k}}{4\pi}\ln\left(2w_{k}\bar{w}_{k}\right)}_{h_{0}}\\
 & \quad-\sum_{k=1}\sum_{l<k}\frac{a_{k}a_{l}}{2\pi}\ln\left|1+\left(\kappa_{l}-1\right)\frac{w_{l-1}}{w_{k-1}}+\sum^{k-2}_{j=l}\kappa_{j+1}\frac{w_{j}}{w_{k-1}}\right|,
\end{align*}
where $h_{0}$ is considered as its unperturbed part and the perturbation
only depends on the ratio of two adjacent components of $\boldsymbol{w}$.
Consequently, setting
$w_N\equiv0$ in the normal-form argument yields the following result.
\begin{cor}
Consider the $N$-point vortices system in $\mathbb{R}^{2}$ with corresponding $A_{k}\neq0,k=1,2,\cdots,N$. Given $n>2\left(N-1\right)$, there exists
$\varepsilon_{0}$ small enough so that if the initial positions satisfy
the following condition: given $\varepsilon\le\varepsilon_{0}$
\begin{itemize}
\item $\frac{\left|w_{l}\left(0\right)\right|}{\left|w_{l+1}\left(0\right)\right|}=O\left(\varepsilon\right),l=1,2,\cdots,N-2$,
\end{itemize}
there is a constant $C_{n}$ independent on $\varepsilon$ such that for $t\le\frac{1}{\varepsilon^{n-2\left(N-1\right)}}$
\[
w_{N}\left(t\right)\equiv0,\left|\left|w_{l}\left(t\right)\right|^{2}-\left|w_{l}\left(0\right)\right|^{2}\right|\le C_{n}\varepsilon^{2\left(N-l\right)+1},l=1,2,\cdots,N-1.
\]
\end{cor}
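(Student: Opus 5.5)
The plan is to deduce the corollary from the normal-form machinery behind Theorem~\ref{thm:main_result}, specialised to $\Omega=\mathbb{R}^{2}$. In that case there is no boundary term from a Robin function, so no stability condition on $\varphi$ is needed.

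\emph{Step 1: reduce to $w_N\equiv0$.} In the whole plane the Hamiltonian is translation invariant, so the center of vorticity $w_{N}$ is a conserved quantity. Because $A_{N}\neq0$, the center of vorticity is well defined and we may translate so that $w_{N}(0)=0$, hence $w_{N}(t)\equiv0$. The Jacobi change of variables is symplectic up to the weights $a_{k+1}A_{k}/A_{k+1}$. Consequently the reduced system in $(w_{1},\dots,w_{N-1})$ is Hamiltonian with the Hamiltonian $H_{\mathbb{R}^{2}}$ displayed above.

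\emph{Step 2: rescale and pass to ratio variables.} The perturbation depends only on the ratios $w_{j}/w_{k-1}$, and $h_{0}$ is logarithmic. The scaling $w\mapsto\lambda w$ therefore changes $H_{\mathbb{R}^{2}}$ only by an additive constant and rescales time by $\lambda^{-2}$ through the Poisson structure. We normalise $|w_{N-1}(0)|=\varepsilon$, which reproduces exactly the hypotheses of Theorem~\ref{thm:main_result} with $w_{N}\equiv0$. This rescaling is also why the whole-plane statement needs no smallness assumption on the outer scale.

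We then introduce the normalised variables $(\boldsymbol{\alpha},\bar{\boldsymbol{\alpha}})$ and the successive-ratio variables $(\boldsymbol{\beta},\bar{\boldsymbol{\beta}})$. In these variables the perturbation is a convergent power series in the $\beta_{k}\sim\varepsilon$, valid on the region where the hierarchy holds.

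\emph{Step 3: normal form and bootstrap.} We run the modified Birkhoff normal form of Sections 3--4 up to order $n$, simply deleting the $w_{N}$ variable. The result is a symplectic change of variables that is $\varepsilon$-close to the identity in the weighted sense. In the new variables the Hamiltonian is a function of the actions $|w_{l}|^{2}$ alone, plus a remainder of size $O(\varepsilon^{n})$ relative to the scaling.

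The actions $|w_{l}|^{2}$ Poisson-commute with the normal-form part, so their drift comes only from the remainder. Over times $t\le\varepsilon^{-(n-2(N-1))}$ this drift is bounded by the remainder size times $t$; the loss $\varepsilon^{-2(N-1)}$ comes from the weights $|w_{l}|^{-2}\sim\varepsilon^{-2(N-l)}$ in the Poisson bracket.

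The deformation of the change of variables contributes $O(\varepsilon^{2(N-l)+1})$ to $|w_{l}|^{2}$, since $|w_{l}|^{2}\sim\varepsilon^{2(N-l)}$ up to constants. A standard continuity (bootstrap) argument keeps the solution inside the domain where the ratio expansions converge for the whole time interval. This yields the stated estimates for $l\le N-1$.

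The main obstacle is Step 2--3 bookkeeping rather than anything new. We must check that, once $w_{N}$ is removed, the exponent structure preserved by the normal form still gives the stated power $\varepsilon^{2(N-l)+1}$. We also need the absence of the $w_{N}$ coupling and of boundary terms not to spoil the solvability of the homological equation; removing terms should only simplify it.
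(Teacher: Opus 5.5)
Your route is the paper's: fix the center of vorticity at the origin so that $w_N\equiv0$, drop the $w_N$-variable and the Robin-function terms, and rerun the normal form of Sections 3--4 and the exit-time argument of Section 5. The gap is in the rescaling step, which you add to remove any assumption on the outer scale. After rescaling you check that the \emph{hypotheses} of Theorem~\ref{thm:main_result} hold, but you never transport the \emph{conclusions} back. Take $\lambda=|w_{N-1}(0)|/\varepsilon$, $w=\lambda\tilde w$ and $t=\lambda^{2}s$. The estimates for $\tilde w$ then give
\[
\bigl||w_l(t)|^2-|w_l(0)|^2\bigr|\le C_n\lambda^{2}\varepsilon^{2(N-l)+1},\qquad t\le\lambda^{2}\varepsilon^{-(n-2(N-1))}.
\]
This is the stated conclusion only when $|w_{N-1}(0)|\approx\varepsilon$, i.e.\ exactly under the normalisation of Theorem~\ref{thm:main_result}.

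Better bookkeeping cannot fix this. The problem is scale invariant, while the stated bounds are absolute powers of $\varepsilon$, and they fail when the outer scale is of order one. Take $N=3$, $a_1=a_2=1$, $a_3=2$. Up to a constant,
$H_{\mathbb{R}^2}=-\frac{1}{2\pi}\left(\ln|w_1|+4\ln|w_2|\right)+\frac{1}{\pi}\Re\frac{w_1^2}{4w_2^2}+O(|w_1/w_2|^4)$.
Hence $\frac{d}{dt}|w_1|^2=-\frac{2}{\pi}\Im\frac{w_1^2}{w_2^2}+\dots$, while $\arg w_1$ rotates at rate $\frac{1}{\pi|w_1|^2}$. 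So $|w_1|^2$ oscillates with amplitude about $|w_1|^4/|w_2|^2$. For $|w_2(0)|=1$ and $|w_1(0)|=\varepsilon$, this amplitude is of order $\varepsilon^4$ and is reached within times $O(\varepsilon^2)$, far above the claimed $C_n\varepsilon^{5}$.

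So your argument proves the corollary only with the extra normalisation $|w_{N-1}(0)|=\varepsilon$, or, for an arbitrary outer scale, in the $\lambda^2$-rescaled form displayed above; that is how it should be stated. The paper's one-line reduction runs the normal form on $U_{\varepsilon,\frac{\varepsilon}{2}}$, where $|\alpha_{N-1}|\sim\varepsilon$, so it does not supply this step either. Your explicit scaling is what exposes the issue.

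A minor inaccuracy: the loss $\varepsilon^{-2(N-1)}$ does not come from weights in the bracket with the actions. By \eqref{eq:poly_action} with $\boldsymbol{K}'=\boldsymbol{L}'=\boldsymbol{E}_l$, the bracket $\{\boldsymbol{\beta}^{\boldsymbol{K}}\bar{\boldsymbol{\beta}}^{\boldsymbol{L}},I_l\}_{\mathcal{S}}$ is a constant multiple of the same monomial, so $|\{R_n,I_l\}_{\mathcal{S}}|\lesssim\varepsilon^{n+1}$. The exponent actually comes from requiring $t\,\varepsilon^{n+1}\le\varepsilon^{2(N-l)+1}$ for the innermost action $l=1$.
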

\begin{rem} When $N=3$, the dynamics resembles that of two point vortices. Indeed, $z_{1},z_{2}$
will rotate around their local center of vorticity. Simultaneously,
this local center acts as an effective point vortex, interacting with
$z_{3}$ to rotate around the global center of vorticity. The system
maintains this hierarchical motion for a long time. See Fig \ref{fig:3pt_example}. Our result can be seen as a generalization of this basic model.
\begin{figure}
\begin{raggedright}
\subfloat[\label{fig:3pt_jacobi_0}]{\includegraphics[scale=0.4]{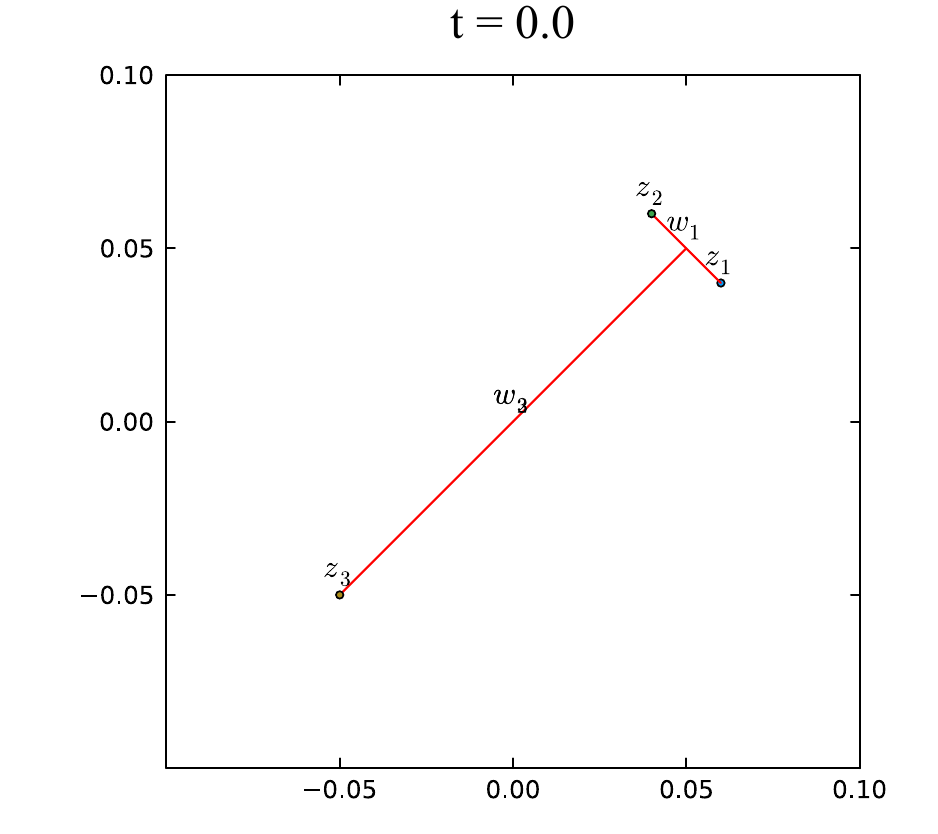}

}\subfloat[\label{fig:3pt_jacobi_1}]{\includegraphics[scale=0.4]{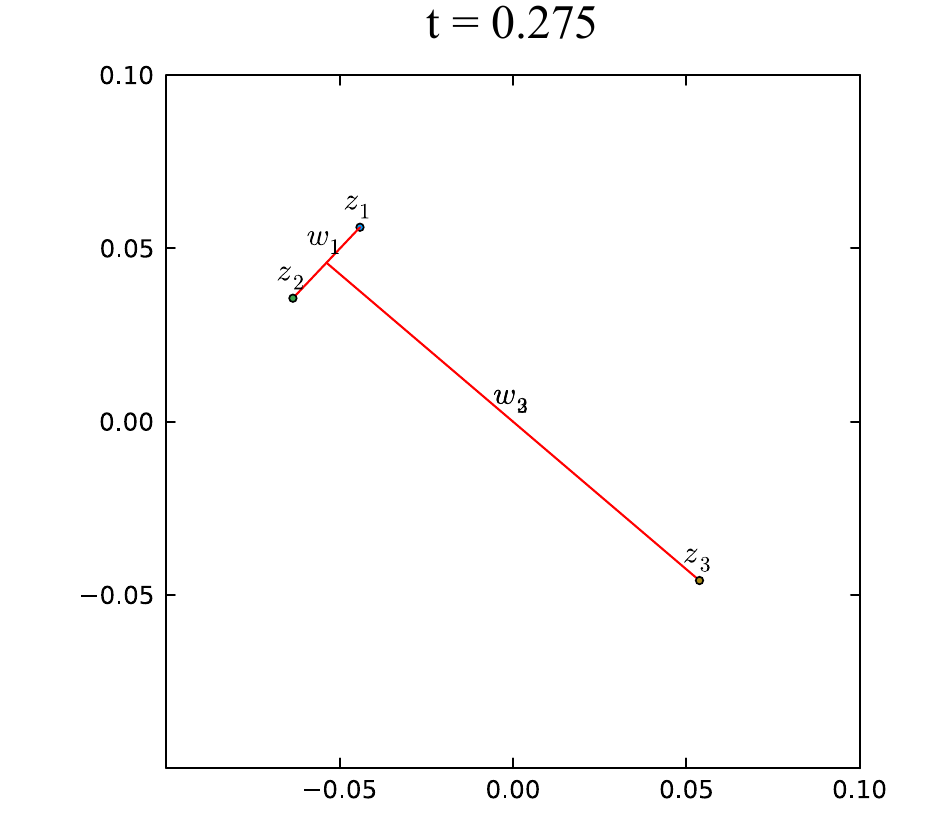}}
\par\end{raggedright}
\begin{raggedright}
\subfloat[\label{fig:3pt_jacobi_2}]{\includegraphics[scale=0.4]{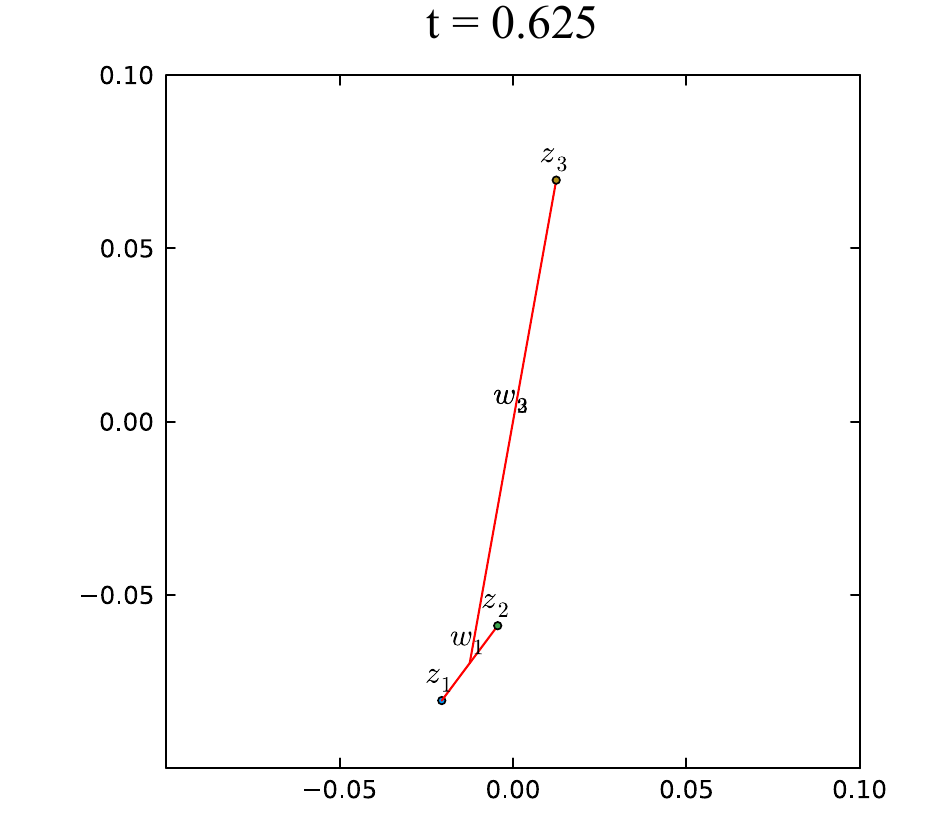}}\subfloat[\label{fig:3pt_full_time}]{\includegraphics[scale=0.4]{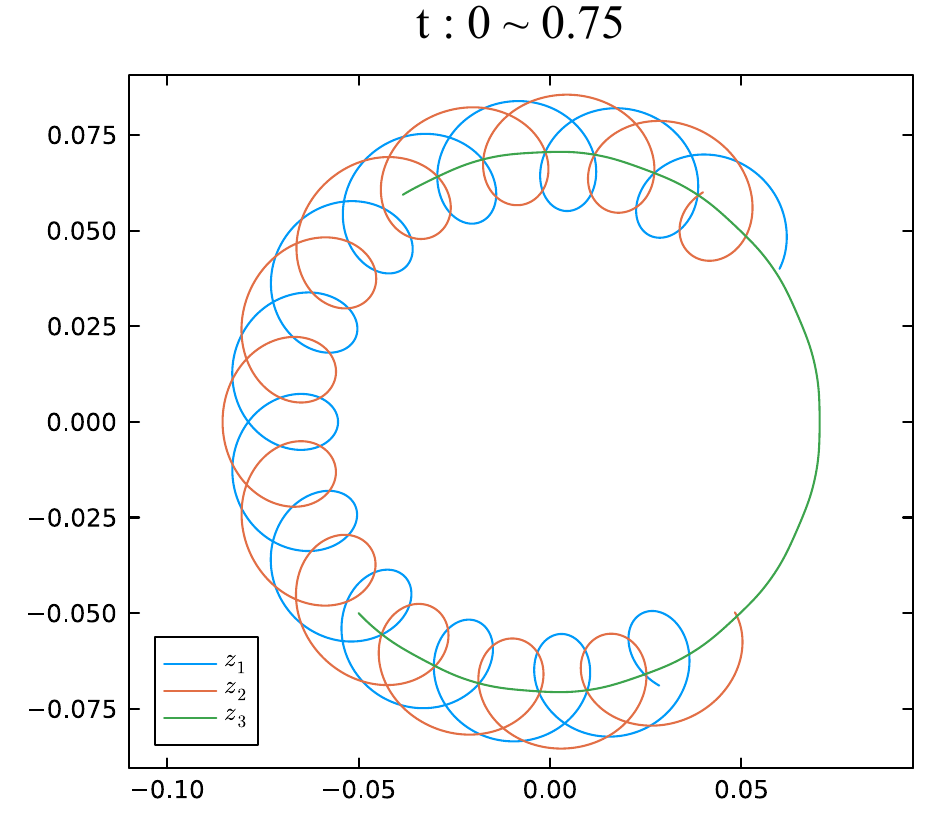}

}\caption{\label{fig:3pt_example}The orbits for $3$ point vortices on $\mathbb{R}^{2}$.
We set $a_{1}=a_{2}=1,a_{3}=2$ and $w_{1}\left(0\right)=\frac{\varepsilon^{2}}{2}-i\frac{\varepsilon^{2}}{2},w_{2}\left(0\right)=\frac{\varepsilon}{2}+i\frac{\varepsilon}{2},w_{3}\left(0\right)=0$.
\ref{fig:3pt_jacobi_0}, \ref{fig:3pt_jacobi_1}, and \ref{fig:3pt_jacobi_2}
are the instantaneous positions at different time and \ref{fig:3pt_full_time}
shows the full orbits for $0<t\le0.75$.}
\par\end{raggedright}
\end{figure}
\end{rem}
\begin{rem}
Without loss of generality, the assumption $w_{l}\left(0\right)\neq0$ can always be fulfilled.
Indeed if $w_{l}\left(0\right)=0$ for some non-adjacent $l$, then one can always
relabel the indices to ensure that this non-zero condition is satisfied.
If $w_{l}\left(0\right)=w_{l+1}\left(0\right)=0$,
it would imply $z_{l+1}=z_{l+2}$ which is impossible.
\end{rem}

\subsection{Notation}

Most of the notations used throughout this paper are summarized below.
\begin{itemize}
\item Point vortices strength: $a_{k}$. Total strength: $a=\sum_{k}a_{k}$.
Partial sum of vorticity strength: $A_{k}=\sum^{k}_{l=1}a_{l}$.
\item Point vortex position: $z_{k}=x_{k}+iy_{k}\simeq\left(x_{k},y_{k}\right)$.
\item The simply connected domain $\Omega\subset\mathbb{R}^{2}$.
\item The Green's function of $\Omega$: $G\left(z_{1},z_{2}\right)=\frac{1}{2\pi}\ln\left|z_{1}-z_{2}\right|+\gamma\left(z_{1},z_{2}\right)$,
where $\gamma$ is the regular part of $G$.
\item Robin's function: $\tilde{\gamma}\left(z\right)=\gamma\left(z,z\right)$.
\item The lowest order of a polynomial in $\left(\boldsymbol{\beta},\bar{\boldsymbol{\beta}}\right)$:
given $f=\sum_{\left(\boldsymbol{K},\boldsymbol{L}\right)}c_{\boldsymbol{K},\boldsymbol{L}}\boldsymbol{\beta}^{\boldsymbol{K}}\bar{\boldsymbol{\beta}}^{\boldsymbol{L}}$,
\[
\mathrm{ord}\left(f\right)=\min\left\{ \left|\boldsymbol{K}\right|+\left|\boldsymbol{L}\right|\big|c_{\boldsymbol{K},\boldsymbol{L}}\neq0\right\} .
\]
\item The set of all exponent pairs $(\boldsymbol{K},\boldsymbol{L})$
corresponding to the expansion of $f$:
\[
\mathscr{E}\left(f\right)=\left\{ \left(\boldsymbol{K},\boldsymbol{L}\right)\left|f=\sum_{\left(\boldsymbol{K},\boldsymbol{L}\right)}c_{\boldsymbol{K},\boldsymbol{L}}\boldsymbol{\beta}^{\boldsymbol{K}}\bar{\boldsymbol{\beta}}^{\boldsymbol{L}},c_{\boldsymbol{K},\boldsymbol{L}}\neq0\right.\right\} 
\]
\item The index set $\boldsymbol{J}$ is formed by concatenating any number of ranges $i$ to $j$ and individual indices. And for multi-index $\boldsymbol{K}$, $\boldsymbol{K}\left[\boldsymbol{J}\right]$ selects the components indexed by $\boldsymbol{J}$, identical to the order of indices in $\boldsymbol{J}$. For example, we can take $\boldsymbol{J} = \left[2:3,5,7:9\right]$ and 
\[
\boldsymbol{K}\left[\boldsymbol{J}\right] =\left(K_{2},K_{3},K_{5},K_{7},K_{8},K_{9}\right).
\]
The index set $\boldsymbol{S}_{\boldsymbol{K}}$ is called the support index set of the multi-index $\boldsymbol{K}$ if $\boldsymbol{K}\left[\boldsymbol{S}_{\boldsymbol{K}}\right]$ contains non-zero element and $\boldsymbol{K}\setminus\boldsymbol{K}\left[\boldsymbol{S}_{\boldsymbol{K}}\right]$ only includes zeros. For example, for multi-index $\boldsymbol{K}=\left(0, 1, 2, 0\right)$, its support index set is $\boldsymbol{S}_{\boldsymbol{K}}=\left[2:3\right]$. We also define the support of a multi-index by 
\[
supp\left(\boldsymbol{K}\right) = \boldsymbol{K}\left[\boldsymbol{S}_{\boldsymbol{K}}\right].
\]
We remark that one can define the support of a truncated multi-index in the same manner. For example, let $\boldsymbol{K}=(0, 1, 2, 0, 3, 4)$. Then the support of $\boldsymbol{K}\left[1:4\right]$ is 
\[
supp\left(\boldsymbol{K}\left[1:4\right]\right) = \left(1, 2\right)
\]
\item A multi-index $\boldsymbol{K}\in\mathbb{N}^{N}$. Let $\mathscr{R}_{0}$
be the set of all multi-index pairs $\left(\boldsymbol{K},\boldsymbol{L}\right)$.
Define the $m$-th level of resonant set
\[
\mathscr{R}_{m}=\left\{ \left(\boldsymbol{K},\boldsymbol{L}\right)\in\mathscr{R}_{0}\left|K_{1}=L_{1},\cdots,K_{m}=L_{m}\right.\right\} .
\]
We introduce another special set $\mathscr{C}$ by
\begin{align*}
\mathscr{C}	&=\bigg(\bigcup^{N}_{m=1}\left\{ \left(\boldsymbol{K},\boldsymbol{L}\right)\in\mathscr{R}_{m-1}\setminus\mathscr{R}_{m}\left|\begin{aligned} & supp\left(\boldsymbol{K}\left[1:m\right]\right)\text{ is connected}\\
 & K_{l}=L_{l}\neq1,1\le l\le m-1
\end{aligned}
\right.\right\} \\
	&\qquad\bigcup\left\{ \left(\boldsymbol{K},\boldsymbol{L}\right)\in\mathscr{R}_{N}\left|supp\left(\boldsymbol{K}\right)\text{ is connected}\right.\right\} \bigg)\\
	&\qquad\bigcap\left\{ \left(\boldsymbol{K},\boldsymbol{L}\right)\left|supp\left(\boldsymbol{K}+\boldsymbol{L}\right)\text{ is connected}\right.\right\} .
\end{align*}
For example, we take $N=4$ and 
\begin{align*}
    & \left(\boldsymbol{K}=\left(0,1,2,0\right),\;\boldsymbol{L}=\left(0,2,0,1\right)\right)\in\mathscr{C}, \quad m=2\\
    & \left(\boldsymbol{K}=\left(0,2,2,1\right),\;\boldsymbol{L}=\left(0,2,1,1\right)\right)\in\mathscr{C}, \quad m=3\\
    & \left(\boldsymbol{K}=\left(1,2,3,0\right),\;\boldsymbol{L}=\left(1,2,0,1\right)\right)\notin\mathscr{C}, \quad m=3, K_1=L_1=1. \\
    & \left(\boldsymbol{K}=\left(2,1,0,0\right),\;\boldsymbol{L}=\left(1,2,0,1\right)\right)\notin\mathscr{C},\quad \mbox{non-connected support of  } {\bf K}+{\bf L}.
\end{align*}
The set $\mathscr{C}$ is crucial in the construction of the normal form, as it remains invariant under the normal form procedure.
\end{itemize}

\section{Preliminaries}

\subsection{Review of the N Point Vortices system}

Let $\mathcal{N}$ be the phase space that includes all possible positions
of point vortices: 
\[
\mathcal{N}=\left\{ \left(x_{1},\cdots,x_{N},y_{1},\cdots,y_{N}\right)\big|\left(x_{k},y_{k}\right)\in\Omega,\left(x_{k},y_{k}\right)\neq\left(x_{l},y_{l}\right),1\le l<k\le N\right\} \subset\mathbb{R}^{2N}.
\]
Recall the the differential $2$-form on $\mathcal{N}$ is defined
as
\[
\omega_{\mathcal{N}}=\sum^{N}_{k=1}a_{k}dx_{k}\wedge dy_{k},
\]
Clearly $\omega_{\mathcal{N}}$ is a sympolectic form on $\mathcal{N}$.
It establishes an isomorphism from the cotangent space $T^{*}\mathcal{N}=\mathbb{R}^{2N}$
to tangent space $T\mathcal{N}=\mathbb{R}^{2N}$. Indeed the representation
theorem implies that each $\boldsymbol{\xi}\in T\mathcal{N}$ corresponds
to a $1$-form $\boldsymbol{\xi}^{*}\in T^{*}\mathcal{N}$ such that
$\boldsymbol{\xi}^{*}\left(\cdot\right)=\omega_{\mathcal{N}}\left(\boldsymbol{\xi},\cdot\right)$.
We denote the isomorphism by $J_{\mathcal{N}}:T^{*}\mathcal{N}\to T\mathcal{N}$.
For each $\boldsymbol{\eta}\in T\mathcal{N}$, it holds that 
\[
\boldsymbol{\xi}^{*}\left(\boldsymbol{\eta}\right)=\omega_{\mathcal{N}}\left(\boldsymbol{\xi},\boldsymbol{\eta}\right)=\sum^{N}_{k=1}a_{k}\left|\begin{array}{cc}
\xi_{k} & \xi_{k+N}\\
\eta_{k} & \eta_{k+N}
\end{array}\right|=\sum^{N}_{k=1}-a_{k}\xi_{k+N}\eta_{k}+a_{k}\xi_{k}\eta_{k+N}.
\]
It indicates that
\[
\boldsymbol{\xi}^{*}=\left(-a_{1}\xi_{N+1},\cdots,-a_{N}\xi_{2N},a_{1}\xi_{1},\cdots,a_{N}\xi_{N}\right)^{T}.
\]
Since $J_{\mathcal{N}}\boldsymbol{\xi}^{*}=\boldsymbol{\xi}$ we obtain
the matrix 
\[
J_{\mathcal{N}}=\left(\begin{array}{cccccc}
 &  &  & \frac{1}{a_{1}}\\
 & 0 &  &  & \ddots\\
 &  &  &  &  & \frac{1}{a_{N}}\\
-\frac{1}{a_{1}}\\
 & \ddots &  &  & 0\\
 &  & -\frac{1}{a_{N}}
\end{array}\right).
\]
The equations of motion for point vortices are represented as the
following Hamiltonian system
\[
\frac{d}{dt}\left(\begin{array}{c}
\boldsymbol{x}\\
\boldsymbol{y}
\end{array}\right)=J_{\mathcal{N}}d_{\mathcal{N}}H
\]
which are equivalent to 
\begin{equation}
\dot{x}_{k}=\frac{1}{a_{k}}\partial_{y_{k}}H,\dot{y}_{k}=-\frac{1}{a_{k}}\partial_{x_{k}}H,\label{eq:N_pt_vector}
\end{equation}
where the Hamiltonian reads 
\[
H=-\sum_{k=1}\sum_{l=1}\frac{a_{k}a_{l}}{2}\gamma\left(z_{k},z_{l}\right)-\sum_{k=1}\sum_{l<k}\frac{a_{k}a_{l}}{2\pi}\ln\left|z_{k}-z_{l}\right|.
\]
The Poisson bracket is defined by
\begin{align*}
\left\{ F,G\right\} _{\mathcal{N}} & =\omega_{\mathcal{N}}\left(J_{\mathcal{N}}d_{\mathcal{N}}F,J_{\mathcal{N}}d_{\mathcal{N}}G\right)\\
 & =\sum^{N}_{k=1}\frac{1}{a_{k}}\left(\partial_{x_{k}}F\partial_{y_{k}}G-\partial_{y_{k}}F\partial_{x_{k}}G\right).
\end{align*}
The vector field $J_{\mathcal{N}}d_{\mathcal{N}}H$ is also called
the Hamiltonian vector field generated by $H$. 

\subsubsection{Reformulation under a linear invertible map}

We now introduce the change of variables 
\[
\left(w_{1},w_{2},\cdots,w_{N}\right)\to\left(z_{1},z_{2},\cdots,z_{N}\right),\quad w_{k}=\left(\xi_{k},\eta_{k}\right),
\]
defined by a linear invertible map $\mathcal{L}:\mathcal{M}\to\mathcal{N}$
whose matrix is the following
\[
\mathcal{L}=\left(\begin{array}{cc}
L & 0\\
0 & L
\end{array}\right),
\]
where $L=\left(L_{mn}\right)_{1\le m\le N,1\le n\le N}$ is an invertible
$N\times N$ matrix. Thus a symplectic structure on $\mathcal{M}$
induced by $\omega_{\mathcal{N}}$ is 
\[
\omega_{\mathcal{M}}=\mathcal{L}^{*}\omega_{\mathcal{N}}=\sum^{N}_{k=1}\sum_{m,n}a_{k}L_{km}L_{kn}d\xi_{m}\wedge d\eta_{n}=\sum_{m,n}B_{mn}d\xi_{m}\wedge d\eta_{n}
\]
where $B_{mn}=\sum^{N}_{k=1}a_{k}L_{km}L_{kn}$. The isomorphism from
$T^{*}\mathcal{M}$ to $T\mathcal{M}$ is 
\[
J_{\mathcal{M}}=\left(\begin{array}{cc}
0 & -B\\
B & 0
\end{array}\right)^{-1}=\left(\begin{array}{cc}
0 & B^{-1}\\
-B^{-1} & 0
\end{array}\right)
\]
with $B=\left(B_{mn}\right)_{1\le m\le N,1\le n\le N}$ a symmetric
matrix. Letting $B^{-1}=\left(C_{mn}\right)_{1\le m\le N,1\le n\le N}$,
the Poisson bracket has the following expression
\begin{align*}
\left\{ F,G\right\} _{\mathcal{M}} & =\omega_{\mathcal{M}}\left(J_{\mathcal{M}}d_{\mathcal{M}}F,J_{\mathcal{M}}d_{\mathcal{M}}G\right)\\
 & =\sum_{m,n}B_{mn}\left|\begin{array}{cc}
\left(B^{-1}\nabla_{\eta}F\right)_{m} & -\left(B^{-1}\nabla_{\xi}F\right)_{n}\\
\left(B^{-1}\nabla_{\eta}G\right)_{m} & -\left(B^{-1}\nabla_{\xi}G\right)_{n}
\end{array}\right|\\
 & =\sum_{m,n}B_{mn}\left|\begin{array}{cc}
\sum_{l}C_{ml}\partial_{\eta_{l}}F & -\sum_{k}C_{nk}\partial_{\xi_{k}}F\\
\sum_{l}C_{ml}\partial_{\eta_{l}}G & -\sum_{k}C_{nk}\partial_{\xi_{k}}G
\end{array}\right|\\
 & =\sum_{m,n}\sum_{k,l}B_{mn}C_{ml}C_{nk}\left(\partial_{\xi_{k}}F\partial_{\eta_{l}}G-\partial_{\eta_{l}}F\partial_{\xi_{k}}G\right)\\
 & =\sum_{k,l}C_{kl}\left(\partial_{\xi_{k}}F\partial_{\eta_{l}}G-\partial_{\eta_{l}}F\partial_{\xi_{k}}G\right),
\end{align*}
where we use the fact that $B^{-1}$ is also symmetric so that $\sum_{m,n}B_{mn}C_{ml}C_{nk}=C_{kl}$
at the last step. The dynamic equations under coordinates $\left(\xi,\eta\right)$
are 
\begin{equation}
\dot{\xi}_{k}=\left\{ \xi_{k},H\right\} _{\mathcal{M}},\dot{\eta}_{k}=\left\{ \eta_{k},H\right\} _{\mathcal{M}},k=1,2,\cdots,N.\label{eq:dynamic_eqn_pq}
\end{equation}

\subsubsection{Jacobi coordinates and its modification}

Recall that $A_{k}=\sum^{k}_{l=1}a_{l}$ and are all non-zero. The
classical Jacobi coordinates:
\begin{align*}
w_{k} & =\frac{\sum^{k}_{l=1}a_{l}z_{l}}{A_{k}}-z_{k+1},k=1,2,\cdots,N-1,\\
w_{N} & =\frac{\sum^{N}_{l=1}a_{l}z_{l}}{A_{N}}.
\end{align*}
Let $\kappa_{k}=\frac{a_{k}}{A_{k}},k=1,2,\cdots,N$, then the matrix
mapping $\boldsymbol{w}$ to $\boldsymbol{z}$ is 
\[
L=\left(\begin{array}{ccccc}
\kappa_{2} & \kappa_{3} & \cdots & \kappa_{N} & 1\\
-\left(1-\kappa_{2}\right) & \kappa_{3} & \cdots & \kappa_{N} & 1\\
0 & -\left(1-\kappa_{3}\right) & \cdots & \kappa_{N} & 1\\
\vdots &  & \ddots &  & \vdots\\
0 & \cdots & 0 & -\left(1-\kappa_{N}\right) & 1
\end{array}\right).
\]
Noticing that $\kappa_{1}=1$, $z_{k}$ can be recovered explicitly
by
\[
z_{k}=\left(\kappa_{k}-1\right)w_{k-1}+\sum^{N}_{j=k}\kappa_{j+1}w_{j},
\]
where the sum index $j$ is understood as an element in $\mathbb{Z}/N\mathbb{Z}$.
Assuming that $D=\text{diag}\left(a_{1},a_{2},\cdots,a_{N}\right)$,
one can explicitly calculate that 
\begin{align*}
B & =L^{T}DL=\text{diag}\left(A_{1}\kappa_{2},A_{2}\kappa_{3},\cdots,A_{N-1}\kappa_{N},A_{N}\right),\\
C & =B^{-1}=\text{diag}\left(\frac{1}{A_{1}\kappa_{2}},\frac{1}{A_{2}\kappa_{3}},\cdots,\frac{1}{A_{N-1}\kappa_{N}},\frac{1}{A_{N}}\right).
\end{align*}
Therefore the sympolectic structure on $\mathcal{M}$ is 
\[
\omega_{\mathcal{M}}=\sum^{N}_{k=1}A_{k}\kappa_{k+1}d\xi_{k}\wedge d\eta_{k},
\]
and Poisson bracket becomes 
\[
\left\{ F,G\right\} _{\mathcal{M}}=\sum^{N}_{k=1}\frac{1}{A_{k}\kappa_{k+1}}\left(\partial_{\xi_{k}}F\partial_{\eta_{k}}G-\partial_{\eta_{k}}F\partial_{\xi_{k}}G\right).
\]
The Hamiltonian in Jacobi coordinates is 
\begin{align*}
H_{\Omega} & =-\sum_{k=1}\sum_{l<k}\frac{a_{k}a_{l}}{2\pi}\ln\left|\left(\kappa_{l}-1\right)w_{l-1}+\sum^{k-2}_{j=l}\kappa_{j+1}w_{j}+w_{k-1}\right|\\
 & \quad-\sum_{k=1}\sum_{l=1}\frac{a_{k}a_{l}}{2}\gamma\left(-w_{k-1}+\sum^{N}_{j=k-1}\kappa_{j+1}w_{j},-w_{l-1}+\sum^{N}_{j=l-1}\kappa_{j+1}w_{j}\right).
\end{align*}
It is worth noting that by expanding $\gamma$ (ref.~\cite{shen2026twopoint}) it holds that
\[
\dot{w}_{N}=A_{N}\left(\begin{array}{cc}
\frac{3}{2}\Im\left(c_{0}\right) & \frac{3}{2}\Re\left(c_{0}\right)-c_{1}\\
\frac{3}{2}\Re\left(c_{0}\right)+c_{1} & -\frac{3}{2}\Im\left(c_{0}\right)
\end{array}\right)w_{N}+h.o.t,
\]
where 
\[
c_{0}=\frac{\varphi'''\left(0\right)}{6\pi\varphi'\left(0\right)},c_{1}=\frac{\left|\varphi'\left(0\right)\right|^{2}}{2\pi}.
\]
The linear part of $\dot{w}_{N}$ is a rotation if the stability condition
$2\left|\varphi^{\prime}\left(0\right)\right|^{3}>\left|\varphi^{\prime\prime\prime}\left(0\right)\right|$
is satisfied. Then there is a linear map $L'$ with $\det\left(L'\right)=1$
that only maps $w_{N}$ to its standard form and leaves $w_{1},w_{2},\cdots,w_{N-1}$
unchanged. Denoting the new coordinates by $\zeta_{k}=\left(p_{k},q_{k}\right)\simeq p_{k}+iq_{k}$
and the phase space by $\mathcal{S}$, the corresponding sympolectic
structure becomes
\[
\omega_{\mathcal{S}}=\sum^{N-1}_{k=1}A_{k}\kappa_{k+1}dp_{k}\wedge dq_{k}+A_{N}\det\left(L'\right)dp_{N}\wedge dq_{N}=\sum^{N}_{k=1}A_{k}\kappa_{k+1}dp_{k}\wedge dq_{k}.
\]
And the Poisson bracket, Hamiltonian are the following
\begin{align}
\left\{ F,G\right\} _{\mathcal{S}} & =\sum^{N}_{k=1}\frac{1}{A_{k}\kappa_{k+1}}\left(\partial_{p_{k}}F\partial_{q_{k}}G-\partial_{q_{k}}F\partial_{p_{k}}G\right).\label{eq:poisson_bracket}\\
H & =-\sum_{k=1}\sum_{l<k}\frac{a_{k}a_{l}}{2\pi}\ln\left|\left(\kappa_{l}-1\right)\zeta_{l-1}+\sum^{k-2}_{j=l}\kappa_{j+1}\zeta_{j}+\zeta_{k-1}\right|\nonumber \\
 & \quad-\sum_{k=1}\sum_{l=1}\frac{a_{k}a_{l}}{2}\gamma\left(S^{-1}\zeta_{N}-\zeta_{k-1}+\sum^{N-1}_{j=k-1}\kappa_{j+1}\zeta_{j},S^{-1}\zeta_{N}-\zeta_{l-1}+\sum^{N-1}_{j=l-1}\kappa_{j+1}\zeta_{j}\right).\label{eq:hamiltonian}
\end{align}
We will work on the coordinates $\zeta_{k}$.

\subsection{Lie Transform}

Given a smooth function $g:\mathcal{S}\to\mathcal{S}$, its Hamiltonian
vector field is $\boldsymbol{X}_{g}=\boldsymbol{J}_{\mathcal{\mathcal{S}}}d_{\mathcal{\mathcal{S}}}g$.
Let $\phi^{s}_{g}$ be the Hamiltonian phase flow
\[
\frac{d}{ds}\phi^{s}_{g}\left(\boldsymbol{\zeta}\right)=\boldsymbol{X}_{g}\left(\phi^{s}_{g}\left(\boldsymbol{\zeta}\right)\right).
\]
The Lie transform is defined by the time $1$-map $\phi_{g}:=\phi^{1}_{g}$.
Then $\phi_{g}$ preserves the differential $2$-form $\omega_{\mathcal{\mathcal{S}}}$.
Thus the change of variable 
\[
\left(p,q\right)=\phi_{g}\left(P,Q\right)
\]
preserves the structure of dynamic equations \eqref{eq:dynamic_eqn_pq}
and 
\[
\dot{P}_{k}=\left\{ P_{k},H\circ\phi_{g}\right\} _{\mathcal{S}},\dot{Q}_{k}=\left\{ Q_{k},H\circ\phi_{g}\right\} _{\mathcal{S}}.
\]
Therefore, it is enough to consider the Hamiltonian under Lie transform.
We remark that by definition of the Poisson bracket, we have 
\[
\frac{d}{ds}H\circ\phi^{s}_{g}=\left\{ g,H\right\} _{\mathcal{S}}\circ\phi^{s}_{g}:=\mathcal{L}_{g}\left(H\right)\circ\phi^{s}_{g},
\]
where $\mathcal{L}_{g}\left(\cdot\right):=\left\{ g,\cdot\right\} _{\mathcal{S}}$
is called the Lie derivative along the Hamiltonian vector field generated
by $g$. More generally, taking derivative $l$ times yields
\[
\frac{d^{l}}{ds^{l}}H\circ\phi^{s}_{g}=\mathcal{L}^{l}_{g}\left(H\right)\circ\phi^{s}_{g}.
\]
Then its Maclaurin expansion of $H\circ\phi^{s}_{g}$ is 
\[
H\circ\phi^{s}_{g}=\sum^{\infty}_{l=0}s^{l}H_{l}=\sum^{\infty}_{l=0}\frac{s^{l}}{l!}\mathcal{L}^{l}_{g}\left(H\right).
\]
Therefore evaluating at $s=1$ gives 
\begin{equation}
H\circ\phi_{g}=\sum^{\infty}_{l=0}\frac{1}{l!}\mathcal{L}^{l}_{g}\left(H\right).\label{eq:transform_expand}
\end{equation}

\subsection{Normal Form Preparation}

Recall the Hamiltonian \eqref{eq:hamiltonian}{\small
\begin{align*}
H_{\Omega} & =-\sum_{k=1}\sum_{l<k}\frac{a_{k}a_{l}}{2\pi}\ln\left|\left(\kappa_{l}-1\right)\zeta_{l-1}+\sum^{k-2}_{j=l}\kappa_{j+1}\zeta_{j}+\zeta_{k-1}\right|\\
 & \quad-\sum_{k=1}\sum_{l=1}\frac{a_{k}a_{l}}{2}\gamma\left(S^{-1}\zeta_{N}-\zeta_{k-1}+\sum^{N-1}_{j=k-1}\kappa_{j+1}\zeta_{j},S^{-1}\zeta_{N}-\zeta_{l-1}+\sum^{N-1}_{j=l-1}\kappa_{j+1}\zeta_{j}\right),
\end{align*}
}
where the regular part $\gamma$  can be explicitly represented as 
\begin{align}
    \gamma\left(z_1,z_2\right)	&=-\frac{1}{2\pi}\ln\left|\left(1-\varphi\left(z_1\right)\overline{\varphi\left(z_2\right)}\right)\left(1-\overline{\varphi\left(z_1\right)}\varphi\left(z_2\right)\right)\right| \nonumber \\ 
	&\qquad+\frac{1}{2\pi}\ln\frac{\left|\varphi\left(z_1\right)-\varphi\left(z_2\right)\right|}{\left|z_1-z_2\right|},\quad z_1,z_2\in\mathbb{C}. \label{eq:gamma}
\end{align}
Thanks to Lemma 2.1 in~\cite{shen2026twopoint}, we know the expansion of $\gamma$
up to the second order: for $x,y$ small
\begin{align}
\gamma\left(z_1,z_2\right) & =\frac{1}{2\pi}\ln\left|\varphi'\left(0\right)\right|+\frac{1}{2}\Re\left(c_{0}\left(z+1^{2}+z_1z+2+z_2^{2}\right)\right)+c_{1}\Re\left(z_1\bar{z}_2\right)+h.o.t,\label{eq:gamma_expand}\\
\tilde{\gamma}\left(z\right) & =\frac{1}{2\pi}\ln\left|\varphi'\left(0\right)\right|+\frac{3}{2}\Re\left(c_{0}z^{2}\right)+c_{1}\left|z\right|^{2}+h.o.t.\label{eq:tilde_gamma_expand}
\end{align}
where 
\[
c_{0}=\frac{\varphi'''\left(0\right)}{6\pi\varphi'\left(0\right)},c_{1}=\frac{\left|\varphi'\left(0\right)\right|^{2}}{2\pi}.
\]
Expanding $\gamma$ by \eqref{eq:gamma_expand} and letting 
\begin{align*}
z_1 & =S^{-1}\zeta_{N}-\zeta_{k-1}+\sum^{N-1}_{j=k-1}\kappa_{j+1}\zeta_{j},\\
z_2 & =S^{-1}\zeta_{N}-\zeta_{l-1}+\sum^{N-1}_{j=l-1}\kappa_{j+1}\zeta_{j}
\end{align*}
 imply 
\begin{align*}
H_{\Omega} & =-\sum_{k=1}\sum_{l<k}\frac{a_{k}a_{l}}{2\pi}\ln\left|\left(\kappa_{l}-1\right)\zeta_{l-1}+\sum^{k-2}_{j=l}\kappa_{j+1}\zeta_{j}+\zeta_{k-1}\right|\\
 & \quad-\frac{1}{2}\omega\left|\zeta_{N}\right|^{2}-\sum^{N-1}_{k=1}\frac{\kappa_{k+1}A_{k}A_{n}}{4}\left(c_{0}\zeta^{2}_{k}+\bar{c}_{0}\bar{\zeta}^{2}_{k}\right)+h.o.t.
\end{align*}
By introducing the normalized variable $\boldsymbol{\alpha}=\frac{\boldsymbol{\zeta}}{\sqrt{2}},\bar{\boldsymbol{\alpha}}=\frac{\bar{\boldsymbol{\zeta}}}{\sqrt{2}}$,
the $l$-th action variable is simply $I_{l}=\alpha_{l}\bar{\alpha}_{l}$.
The above Hamiltonian then becomes 
\begin{align}
H_\Omega & =\underbrace{-\sum^{N-1}_{k=1}\frac{a_{k+1}A_{k}}{4\pi}\ln\left(\alpha_{k}\bar{\alpha}_{k}\right)-\omega\alpha_{N}\bar{\alpha}_{N}}_{h_{0}}\nonumber \\
 & \quad-\sum_{k=1}\sum_{l<k}\frac{a_{k}a_{l}}{2\pi}\ln\left|1+\left(\kappa_{l}-1\right)\frac{\alpha_{l-1}}{\alpha_{k-1}}+\sum^{k-2}_{j=l}\kappa_{j+1}\frac{\alpha_{j}}{\alpha_{k-1}}\right|\nonumber \\
 & \quad-\sum^{N-1}_{k=1}\frac{\kappa_{k+1}A_{k}A_{N}}{2}\left(c_{0}\alpha^{2}_{k}+\bar{c}_{0}\bar{\alpha}^{2}_{k}\right)+h.o.t,\label{eq:Ham}
\end{align}
where $\omega>0$ is the characteristic frequency of the linear motion
of the center of vorticity. We expect that $h_{0}$ is the dominant
part and consider the rest as the perturbation. Apparently, the perturbation
$H-h_{0}$ is not standard since it is not a power series in terms
of $\left(\boldsymbol{\alpha},\bar{\boldsymbol{\alpha}}\right)$.
we define the variables $\boldsymbol{\beta}=\left(\beta_{1},\beta_{2},\cdots,\beta_{N}\right)$
to help doing the normal form procedure:
\begin{align*}
 & \beta_{i}:=\frac{\alpha_{i}}{\alpha_{i+1}},i=1,2,\cdots,N-2,\\
 & \beta_{N-1}:=\alpha_{N-1},\beta_{N}:=\alpha_{N}.
\end{align*}
By noting that for $1\le l\le k\le N-1$,
\[
\frac{\alpha_{l}}{\alpha_{k}}=\prod^{k-1}_{i=l}\beta_{i},\quad\alpha_{l}=\prod^{N-1}_{i=l}\beta_{i}
\]
so given all $\beta_{i}$ small, one can expand the Hamiltonian as
the following{\footnotesize
\begin{align}
H_{\Omega} & =h_{0}-\sum^{N}_{k=1}\sum^{k-1}_{l=2}\frac{a_{k}a_{l}}{2\pi}\ln\left|1+\left(\kappa_{l}-1\right)\beta_{l-1}\beta_{l}\cdots\beta_{k-2}+\sum^{k-2}_{j=l}\kappa_{j+1}\beta_{j}\beta_{j+1}\cdots\beta_{k-2}\right|\nonumber \\
 & \quad-\sum^{N-1}_{k=1}\frac{\kappa_{k+1}A_{k}A_{N}}{2}\left(c_{0}\left(\beta_{k}\beta_{k+1}\cdots\beta_{N-1}\right)^{2}+\bar{c}_{0}\left(\bar{\beta}_{k}\bar{\beta}_{k+1}\cdots\bar{\beta}_{N-1}\right)^{2}\right)+h.o.t\nonumber \\
 & =h_{0}+\sum_{\left|\boldsymbol{K}\right|+\left|\boldsymbol{L}\right|\ge1}c_{0,K,L}\beta^{\boldsymbol{K}}\bar{\beta}^{\boldsymbol{L}}\nonumber \\
 & :=h_{0}+R_{0}.\label{eq:H0}
\end{align}
}A key observation for $R_{0}$ is that the degree pairs $(\boldsymbol{K},\boldsymbol{L})$
are highly constrained. Indeed one can show that $\mathscr{E}\left(R_0\right)\in\mathscr{C}$ (see details in the proof of Lemma \ref{lem:normal_form_formal}). 

Next we consider the Poisson bracket \eqref{eq:poisson_bracket}.
In $\left(\boldsymbol{\alpha},\bar{\boldsymbol{\alpha}}\right)$ coordinates
the Poisson bracket has form
\[
\left\{ F,G\right\} _{\mathcal{S}}=\sum^{N}_{k=1}\frac{i}{A_{k}\kappa_{k+1}}\left(\partial_{\alpha_{k}}F\partial_{\bar{\alpha}_{k}}G-\partial_{\bar{\alpha}_{k}}F\partial_{\alpha_{k}}G\right).
\]
where the index $k\in\mathbb{Z}/N\mathbb{Z}$. We are interested in
computing $\left\{ f,h_{0}\right\} _{\mathcal{S}}$ and $\left\{ f,g\right\} _{\mathcal{S}}$
where $f,g$ are polynomials in $\left(\boldsymbol{\beta},\bar{\boldsymbol{\beta}}\right)$.
It naturally reduces to evaluating the bracket of monomials, namely
$\left\{ \boldsymbol{\beta}^{\boldsymbol{K}}\bar{\boldsymbol{\beta}}^{\boldsymbol{L}},h_{0}\right\} _{\mathcal{S}}$
and $\left\{ \boldsymbol{\beta}^{\boldsymbol{K}}\bar{\boldsymbol{\beta}}^{\boldsymbol{L}},\boldsymbol{\beta}^{\boldsymbol{K}'}\bar{\boldsymbol{\beta}}^{\boldsymbol{L}'}\right\} _{\mathcal{S}}$.
A straightforward calculation shows {\scriptsize
\begin{align}
\left\{ \boldsymbol{\beta}^{\boldsymbol{K}}\bar{\boldsymbol{\beta}}^{\boldsymbol{L}},h_{0}\right\} _{\mathcal{S}} & =-\frac{i}{4\pi}\sum^{N-1}_{l=1}A_{l+1}\left(\delta_{l}\left(\boldsymbol{K}\right)-\delta_{l}\left(\boldsymbol{L}\right)\right)\boldsymbol{\beta}^{\boldsymbol{K}-\boldsymbol{E}_{l}}\bar{\boldsymbol{\beta}}^{\boldsymbol{L}-\boldsymbol{E}_{l}}\nonumber \\
 & \quad-i\frac{\omega}{A_{N}}\left(\delta_{N}\left(\boldsymbol{K}\right)-\delta_{N}\left(\boldsymbol{L}\right)\right)\boldsymbol{\beta}^{\boldsymbol{K}}\boldsymbol{\beta}^{\boldsymbol{L}},\label{eq:h0_action}\\
\left\{ \boldsymbol{\beta}^{\boldsymbol{K}}\bar{\boldsymbol{\beta}}^{\boldsymbol{L}},\boldsymbol{\beta}^{\boldsymbol{K}'}\bar{\boldsymbol{\beta}}^{\boldsymbol{L}'}\right\} _{\mathcal{S}} & =i\sum^{N-1}_{l=1}\frac{\delta_{l}\left(\boldsymbol{K}\right)\delta_{l}\left(\boldsymbol{L}'\right)-\delta_{l}\left(\boldsymbol{L}\right)\delta_{l}\left(\boldsymbol{K}'\right)}{A_{l}\kappa_{l+1}}\boldsymbol{\beta}^{\boldsymbol{K}+\boldsymbol{K}'-\boldsymbol{E}_{l}}\bar{\boldsymbol{\beta}}^{\boldsymbol{L}+\boldsymbol{L}'-\boldsymbol{E}_{l}}\nonumber \\
 & \quad-i\frac{\delta_{N}\left(\boldsymbol{K}\right)\delta_{N}\left(\boldsymbol{L}'\right)-\delta_{N}\left(\boldsymbol{L}\right)\delta_{N}\left(\boldsymbol{K}'\right)}{A_{N}}\boldsymbol{\beta}^{\boldsymbol{K}+\boldsymbol{K}'-\boldsymbol{e}_{N}}\bar{\boldsymbol{\beta}}^{\boldsymbol{L}+\boldsymbol{L}'-\boldsymbol{e}_{N}},\label{eq:poly_action}
\end{align}
}where $\boldsymbol{E}_{m}$ is an index translation vector: 
\[
\left(\boldsymbol{E}_{m}\right)_{j}=\begin{cases}
1, & m\le j\le N-1,\\
0, & \text{otherwise},
\end{cases}
\]
and for a given multi-index $\boldsymbol{K}$ the function $\delta_{j}\left(\boldsymbol{K}\right)$
is defined by
\[
\delta_{j}\left(\boldsymbol{K}\right)=\begin{cases}
K_{1}, & j=1,\\
K_{j}-K_{j-1}, & 2\le j\le N-1,\\
K_{N}, & j=N.
\end{cases}
\]

\section{A formal normal form construction}

In this section, we perform the formal Birkhoff normal form up to
degree $n$. We agree that any reference to the degree or order of
a polynomial are all with respect to the variables $\left(\boldsymbol{\beta},\bar{\boldsymbol{\beta}}\right)$.
Recall that we consider 
\[
h_{0}=-\sum^{N-1}_{k=1}\frac{a_{k+1}A_{k}}{4\pi}\ln\left(\alpha_{k}\bar{\alpha}_{k}\right)-\omega\alpha_{N}\bar{\alpha}_{N}
\]
as the unperturbed Hamiltonian. Identity \eqref{eq:poly_action} shows that for two polynomials in $\left(\boldsymbol{\beta},\bar{\boldsymbol{\beta}}\right)$ with degree $k,l$, the maximum degree loss in Poisson bracket is $2N-2$ (corresponding to $l=1$ in \eqref{eq:poly_action}) while in the classical case, the loss is only $2$. This is a serious obstruction to run a normal form procedure even in a formal sense. In this section we will show the following formal
result. Recall that 
\[
\mathscr{E}\left(f\right)=\left\{ \left(\boldsymbol{K},\boldsymbol{L}\right)\left|f=\sum_{\left(\boldsymbol{K},\boldsymbol{L}\right)}c_{\boldsymbol{K},\boldsymbol{L}}\boldsymbol{\beta}^{\boldsymbol{K}}\bar{\boldsymbol{\beta}}^{\boldsymbol{L}},c_{\boldsymbol{K},\boldsymbol{L}}\neq0\right.\right\} 
\]
is the set of all exponent pairs $(\boldsymbol{K},\boldsymbol{L})$
corresponding to the expansion of $f$. 
\begin{lem}
\label{lem:normal_form_formal}Given $n=1,2,3,
\cdots$, there
exists a canonical transformation $T_{n}\left(\boldsymbol{\alpha},\bar{\boldsymbol{\alpha}}\right)$
formally such that 
\[
H_{n}:=H_\Omega\circ T_{n}=h_{0}+Z_{n}+R_{n},
\]
where $\left\{ Z_{n},h_{0}\right\} _{\mathcal{S}}=0$ and in terms
of $\left(\boldsymbol{\beta},\bar{\boldsymbol{\beta}}\right)$, it
holds that $\mathrm{ord}\left(R_{n}\right)\ge n+1$. Moreover, all
the exponents of $Z_{n},R_{n}$ satisfy
\[
\mathscr{E}\left(Z_{n}\right)\subset\mathscr{C},\quad\mathscr{E}\left(R_{n}\right)\subset\mathscr{C}.
\]
\end{lem}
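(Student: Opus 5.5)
Proof proposal. The construction goes by induction on $n$, composing Lie transforms $T_n=\phi_{g_1}\circ\cdots\circ\phi_{g_n}$. At each step the generator $g_n$ is chosen to remove the non-resonant monomials of the lowest-order part of $R_{n-1}$. The set $\mathscr{C}$ serves as the inductive invariant.

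\textbf{Base case.} First we check $\mathscr{E}(R_0)\subset\mathscr{C}$ directly from \eqref{eq:H0}.

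The logarithmic terms are functions of the chains $\beta_{j}\beta_{j+1}\cdots\beta_{k-2}$ together with their conjugates. A product of such chains, with $\boldsymbol{K}$ coming from holomorphic factors and $\boldsymbol{L}$ from antiholomorphic ones, has connected $supp(\boldsymbol{K})$ and connected $supp(\boldsymbol{K}+\boldsymbol{L})$. We expect this because all chains appearing in the expansion of $\ln|1+\cdots|$ end at the same index $k-2$.

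For the $\gamma$-terms, the monomials are $(\beta_k\cdots\beta_{N-1})^2$ and the higher-order terms in $\zeta_N$. These are tails ending at $N-1$ or $N$, so the same connectivity holds.

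Finally, every such monomial belongs to $\mathscr{R}_{m-1}\setminus\mathscr{R}_m$ for the first index $m$ where $K_m\neq L_m$ (or to $\mathscr{R}_N$). The requirement $K_l=L_l\neq1$ for $l<m$ must then be verified from the explicit structure of the chains; I expect a short case check.

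\textbf{Inductive step, homological equation.} Write $R_{n-1}=F_n+R'_{n-1}$, where $F_n$ collects the monomials of order exactly $n$. By \eqref{eq:h0_action}, $\{\boldsymbol{\beta}^{\boldsymbol{K}}\bar{\boldsymbol{\beta}}^{\boldsymbol{L}},h_0\}$ vanishes exactly on $\mathscr{R}_N$. Take $Z$-part $=\Pi_{\mathscr{R}_N}F_n$. For $(\boldsymbol{K},\boldsymbol{L})\in\mathscr{R}_{m-1}\setminus\mathscr{R}_m$, all $\delta_l$-differences with $l<m$ vanish. The bracket with $h_0$ is then a sum of monomials with shifts $\boldsymbol{E}_l$, $l\ge m$, whose leading contribution comes from $l=m$ with nonzero coefficient $A_{m+1}(\delta_m(\boldsymbol{K})-\delta_m(\boldsymbol{L}))$.

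Since the output is inhomogeneous, I would solve $\{g,h_0\}+F_n-Z=\text{(order}\ge n+1)$ by a triangular/Neumann procedure. Set $g^{(0)}=c\,\boldsymbol{\beta}^{\boldsymbol{K}+\boldsymbol{E}_m}\bar{\boldsymbol{\beta}}^{\boldsymbol{L}+\boldsymbol{E}_m}$, so that its $l=m$ term cancels the target monomial. The remaining terms with shifts $\boldsymbol{E}_m-\boldsymbol{E}_l$, $l>m$, have strictly higher order. Because $\boldsymbol{E}_m-\boldsymbol{E}_l$ is supported on $[m:l-1]$, they either lie in a higher resonance level or can be pushed into $R_n$.

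This is the sense of "adding a higher order term". The price is that $g$ has order $\ge n-(2N-2)$ in general, which is exactly the degree loss noted above. The bookkeeping must use that $\mathscr{C}$-exponents, after shifting by $\boldsymbol{E}_m$, produce brackets whose genuine order does not drop below $n+1$.

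\textbf{Invariance of $\mathscr{C}$ and order counting.} Expanding $H_{n-1}\circ\phi_{g_n}$ via \eqref{eq:transform_expand}, we must show two things:
\begin{itemize}
\item[(a)] $\mathscr{C}$ is closed under the shifts $\pm\boldsymbol{E}_l$ appearing above, together with the condition $K_l=L_l\neq 1$ for $l<m$;
\item[(b)] \eqref{eq:poly_action} maps $\mathscr{C}\times\mathscr{C}$ into $\mathscr{C}$.
\end{itemize}
Connectivity of supports survives adding two connected supports whose bracket is nonzero, because $\delta_l(\boldsymbol{K})\delta_l(\boldsymbol{L}')-\delta_l(\boldsymbol{L})\delta_l(\boldsymbol{K}')\ne0$ forces the supports to meet or touch near $l$.

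The condition $K_l=L_l\neq1$ is what guarantees that subtracting $\boldsymbol{E}_l$ never creates negative exponents or disconnects the support. It is also what lets us show that $\{g,\cdot\}$ loses at most $2$ in effective order on $\mathscr{C}$, rather than $2N-2$. The point is that the $\boldsymbol{E}_m$ padding of $g$ is cancelled by the $-\boldsymbol{E}_l$ in the bracket.

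\textbf{Main obstacle.} I expect this combinatorial closure, and the resulting sharp order estimate $\mathrm{ord}(R_n)\ge n+1$ despite the $2N-2$ loss, to be the main obstacle. I would attack it first by proving a standalone lemma: for $f,g$ with exponents in $\mathscr{C}$, one has $\mathscr{E}(\{f,g\})\subset\mathscr{C}$ and $\mathrm{ord}\{f,g\}\ge\mathrm{ord} f+\mathrm{ord} g-2$ after the $\boldsymbol{E}$-padding is accounted for. The induction then closes, and the resonant parts accumulated over the steps define $Z_n$, which satisfies $\{Z_n,h_0\}=0$.
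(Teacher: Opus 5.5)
Your architecture (Lie-transform induction, generators padded by $\boldsymbol{E}_m$ with the $l>m$ leftovers pushed into the remainder, $\mathscr{C}$ as the invariant) matches the paper's. There are, however, genuine gaps at the two places where the real content lies: one step is deferred and the other is misformulated.

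\textbf{Base case.} The condition $K_l=L_l\neq1$ ($l<m$) for $R_0$ is not a short check on chains, and your inventory of terms is incomplete. For the whole-plane logarithms the condition holds only because $\ln|1+u|=\Re\ln(1+u)$ splits into a purely holomorphic and a purely antiholomorphic series. So every monomial has $\boldsymbol{K}=\boldsymbol{0}$ or $\boldsymbol{L}=\boldsymbol{0}$, and $K_l=L_l$ forces both to vanish. Mixed products of chains, as you describe them, would violate it (e.g.\ $\beta_1\beta_2\,\bar\beta_1\bar\beta_2\bar\beta_3$). The Robin part is not just $(\beta_k\cdots\beta_{N-1})^2$ plus $\zeta_N$-terms: it contains genuinely mixed monomials from $\ln\bigl(1-\varphi(z_k)\overline{\varphi(z_l)}\bigr)$, and for these the condition rests on a cancellation specific to Jacobi coordinates. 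One writes the double sum as $-\sum_{p\ge1}\frac1p\bigl(\sum_k a_k\varphi(z_k)^p\bigr)\overline{\bigl(\sum_l a_l\varphi(z_l)^p\bigr)}$, and each factor is a combination of the sums $\sum_k a_kz_k^q$. In $\sum_k a_kz_k^q$, the coefficient of any $\boldsymbol{w}^{\boldsymbol{K}}$ whose first nonzero exponent is $K_j=1$ ($j\le N-1$) is proportional to $A_j\kappa_{j+1}+a_{j+1}(\kappa_{j+1}-1)=0$. Since $w_j\propto\beta_j\cdots\beta_{N-1}$, the $\beta$-exponents are cumulative sums of $\boldsymbol{w}$-exponents, hence $0$ or $\ge2$ at every index $\le N-1$. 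Without this identity, a term such as $w_1w_2^2\,\bar w_1\bar w_3^3$ would give $((1,3,3,\dots),(1,1,4,\dots))\notin\mathscr{C}$.

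\textbf{Inductive step.} The standalone lemma you plan to prove first is false as stated, so the induction cannot be closed through it.

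$\mathscr{C}$ is not closed under brackets of arbitrary $\mathscr{C}$-monomials. Indeed $(\boldsymbol{e}_1,\boldsymbol{0}),(\boldsymbol{0},\boldsymbol{e}_1)\in\mathscr{C}$, but for $N\ge3$, since $\beta_1=\alpha_1/\alpha_2$, the bracket $\{\beta_1,\bar\beta_1\}_{\mathcal{S}}$ contains $|\beta_2\cdots\beta_{N-1}|^{-2}$, which is not even a polynomial. For the same reason, closure under $-\boldsymbol{E}_l$ fails.

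Nor is there a uniform loss of at most $2$. Take the generator padded from $((2,2,1,0,\dots),(2,2,0,\dots))\in\mathscr{C}$ with $m=3$. The $l=1$ term of its bracket with $f=\beta_1$ is a nonzero multiple of $\boldsymbol{\beta}^{(2,1,1,0,\dots)}\bar{\boldsymbol{\beta}}^{(1,1,0,\dots)}$, of degree $6<\mathrm{ord}f+\mathrm{ord}g-2$ under any reading of $\mathrm{ord}\,g$. (Incidentally, padded generators have degree $n+2(N-m)\ge n$; the loss $2(N-l)$ occurs in the bracket, not in $g$.)

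What is true, and suffices, is a statement only about padded monomials $\boldsymbol{\beta}^{\boldsymbol{K}+\boldsymbol{E}_m}\bar{\boldsymbol{\beta}}^{\boldsymbol{L}+\boldsymbol{E}_m}$ with $(\boldsymbol{K},\boldsymbol{L})\in\mathscr{C}\cap(\mathscr{R}_{m-1}\setminus\mathscr{R}_m)$. Their bracket with any $\mathscr{C}$-monomial stays in $\mathscr{C}$ and raises the order by at least $1$ (in general no more). That is enough, because every new remainder term is at least one bracket with $g_k$ applied to something of order $\ge k$. The mechanism is as follows:
\begin{itemize}
\item For $l\ge m$, the padding absorbs $-\boldsymbol{E}_l$.
\item For $l<m$, either $K_l=L_l=K_{l-1}=L_{l-1}=0$, so the relevant $\delta_l$'s vanish and the term drops out; or connectivity forces $K_j=L_j\ge1$ on $[l,m-1]$. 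In the latter case $|\boldsymbol{K}|+|\boldsymbol{L}|\ge2(m-l)+1$ exactly compensates the loss $2(m-l)$, and $K_j\neq1$ gives $K_j-1\ge1$, so supports stay connected.
\item For $m=l=N$, you also need an invariant you do not track: monomials supported only at index $N$ occur only in degree $\ge3$, because the quadratic $\zeta_N$-part was absorbed into $h_0$.
\end{itemize}
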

\begin{proof}
The construction goes recursively. Let 
\[
H_{0}:=H_\Omega=h_{0}+Z_{0}+R_{0}.
\]
Clearly $Z_{0}=0$ and according to \eqref{eq:H0}, $\mathrm{ord}\left(R_{0}\right)=1$.
Next we show $\mathscr{E}\left(R_0\right)\subset\mathscr{C}$. From \eqref{eq:H0}, we know
{\small
\begin{align}
R_{0} & =-\sum^{N}_{k=1}\sum^{k-1}_{l=2}\frac{a_{k}a_{l}}{2\pi}\ln\left|1+\left(\kappa_{l}-1\right)\beta_{l-1}\beta_{l}\cdots\beta_{k-2}+\sum^{k-2}_{j=l}\kappa_{j+1}\beta_{j}\beta_{j+1}\cdots\beta_{k-2}\right|\label{eq:from_plane}\\
 & \quad-\sum^{N-1}_{k=1}\frac{\kappa_{k+1}A_{k}A_{N}}{2}\left(c_{0}\left(\beta_{k}\beta_{k+1}\cdots\beta_{N-1}\right)^{2}+\bar{c}_{0}\left(\bar{\beta}_{k}\bar{\beta}_{k+1}\cdots\bar{\beta}_{N-1}\right)^{2}\right)+h.o.t.\label{eq:boundary_effect}
\end{align}
}
Appearently, by considering $\boldsymbol{\beta},\bar{\boldsymbol{\beta}}$
as independed variables, $\boldsymbol{\beta}^{\boldsymbol{K}}\bar{\boldsymbol{\beta}}^{\boldsymbol{L}}$
in the expansion of \eqref{eq:from_plane} satisfies either $\boldsymbol{K}=\boldsymbol{0}$
or $\boldsymbol{L}=\boldsymbol{0}$. In addition, the support
of non-zero multi-index $\boldsymbol{K}$ or $\boldsymbol{L}$ is
connected, so that $\mathscr{E}\left(\eqref{eq:from_plane}\right)\subset\mathscr{C}$. For \eqref{eq:boundary_effect}, it is derived from 
\[
\sum^{N}_{k=1}\sum^{N}_{l=1}\frac{a_{k}a_{l}}{2}\gamma\left(z_{k},z_{l}\right)
\]
through series of changes of variables
\[
\boldsymbol{z}\to\boldsymbol{w}\to\boldsymbol{\zeta}\to\boldsymbol{\alpha}\to\boldsymbol{\beta}.
\]
Noticing that from \eqref{eq:gamma}, it holds that 
\begin{align}
    & \sum^{N}_{k=1}\sum^{N}_{l=1}\frac{a_k a_l}{2}\gamma\left(z_k,z_l\right) \nonumber \\	
    =&-\frac{1}{4\pi}\sum^{N}_{k=1}\sum^{N}_{l=1}a_k a_l\ln\left|\left(1-\varphi\left(z_k\right)\overline{\varphi\left(z_l\right)}\right)\left(1-\overline{\varphi\left(z_k\right)}\varphi\left(z_l\right)\right)\right| \label{eq:mixed}\\ 
	&+\frac{1}{4\pi}\sum^{N}_{k=1}\sum^{N}_{l=1}a_k a_l\ln\frac{\left|\varphi\left(z_k\right)-\varphi\left(z_l\right)\right|}{\left|z_k-z_l\right|}. \label{eq:nonmixed}
\end{align}
For \eqref{eq:nonmixed}, the expansion in terms of $\boldsymbol{\beta},\bar{\boldsymbol{\beta}}$ has the same structure as in \eqref{eq:from_plane}. Thus one has $\mathscr{E}\left(\eqref{eq:nonmixed}\right)\subset\mathscr{C}$. To prove $\mathscr{E}\left(\eqref{eq:mixed}\right)\subset\mathscr{C}$, it is sufficient to show
\begin{equation}\label{eq:mix_middle_step}
\mathscr{E}\left(\sum^{N}_{k=1}\sum^{N}_{l=1}a_{k}a_{l}\ln\left(1-\varphi\left(z_{k}\right)\overline{\varphi\left(z_{l}\right)}\right)\right)\subset\mathscr{C}.
\end{equation}
Noticing that we have the expansion
\begin{align*}
\sum^{N}_{k=1}\sum^{N}_{l=1}a_{k}a_{l}\ln\left(1-\varphi\left(z_{k}\right)\overline{\varphi\left(z_{l}\right)}\right) & =\sum_{p\ge1}\sum_{k,l}-\frac{a_{k}a_{l}}{p}\left(\varphi\left(z_{k}\right)\overline{\varphi\left(z_{l}\right)}\right)^{p}\\
 & =\sum_{p\ge1}-\frac{1}{p}\sum^{N}_{k=1}a_{k}\varphi\left(z_{k}\right)^{p}\sum^{N}_{l=1}a_{l}\overline{\varphi\left(z_{l}\right)}^{p},
\end{align*}
applying Proposition \ref{prop:mix_structure} gives the following power series in $\boldsymbol{w},\bar{\boldsymbol{w}}$
\[
\sum^{N}_{k=1}\sum^{N}_{l=1}a_{k}a_{l}\ln\left(1-\varphi\left(z_{k}\right)\overline{\varphi\left(z_{l}\right)}\right)=\sum c_{\boldsymbol{K},\boldsymbol{L}}\boldsymbol{w}^{\boldsymbol{K}}\bar{\boldsymbol{w}}^{\boldsymbol{L}}
\]
where for any non-vanishing term $c_{\boldsymbol{K},\boldsymbol{L}} \neq 0$, the first non-zero component $K_i$ of $\boldsymbol{K}$ satisfies $K_i \ge 2$ whenever $i \le N-1$, and similarly, the first non-zero component $L_j$ of $\boldsymbol{L}$ satisfies $L_j \ge 2$ whenever $j \le N-1$. Plugging in the change of variable $\boldsymbol{w}\to\boldsymbol{\beta}$ proves \eqref{eq:mix_middle_step}.
In summary, the Lemma is true for $H_0$. 

Assuming that 
\[
H_{k-1}=h_{0}+Z_{k-1}+R_{k-1}
\]
satisfies $\left\{ Z_{k-1},h_{0}\right\} _{\mathcal{S}}=0, \mathrm{ord}\left(R_{k-1}\right)=k$ and $\mathscr{E}\left(R_{k-1}\right)\subset \mathscr{C}, \mathscr{E}\left(Z_{k-1}\right)\subset \mathscr{C}$.
Let $p_{k-1}$ be the terms of degree $k$ in $R_{k-1}$. To the next
step, the canonical transformation $\phi_{k}$ shall satisfy
\[
H_{k}=H_{k-1}\circ\phi_{k}=h_{0}+Z_{k}+\sum_{\left|\boldsymbol{K}\right|+\left|\boldsymbol{L}\right|\ge k+1}c_{k,\boldsymbol{K},\boldsymbol{L}}\boldsymbol{\beta}^{\boldsymbol{K}}\bar{\boldsymbol{\beta}}^{\boldsymbol{L}}.
\]
Assuming the generating function of $\phi_{k}$ is $g_{k}$. According
to \eqref{eq:transform_expand}, it holds that
\begin{align}
H_{k} & =h_{0}+Z_{k-1}\nonumber \\
 & \quad+\left\{ g_{k},h_{0}\right\}_{\mathcal{S}} +p_{k-1}\label{eq:linear_eqn}\\
 & \quad+Z_{k-1}\circ\phi_{k}-Z_{k-1}\label{eq:h1}\\
 & \quad+h_{0}\circ\phi_{k}-h_{0}-\left\{ g_{k},h_{0}\right\}_{\mathcal{S}} \label{eq:h2}\\
 & \quad+p_{k-1}\circ\phi_{k}-p_{k-1}\label{eq:h3}\\
 & \quad+\left(R_{k-1}-p_{k-1}\right)\circ\phi_{k}.\label{eq:h4}
\end{align}
In a classic scenario one has to solve the homological equation in
$g_{k}$ and $z_{k}$
\[
\left\{ g_{k},h_{0}\right\}_{\mathcal{S}} +p_{k-1}=z_{k}
\]
where $\left\{ z_{k},h_{0}\right\} _{\mathcal{S}}=0$ so that the
$k$-th order terms cancel out and prove that the remainders \eqref{eq:h1}, \eqref{eq:h2}, \eqref{eq:h3}, and \eqref{eq:h4}
are all of order at least $k+1$. However since $\left\{ \cdot,h_{0}\right\} _{\mathcal{S}}$
is no longer homogeneous, the classic homological equation does not
have a solution in the space of polynomials. Fortunately, we can allow
\eqref{eq:linear_eqn} to output a higher order part $r_{k}$ such
that $g_{k}$ is still a polynomial. Then homological equation is
modified as the following
\begin{equation}
\begin{cases}
\left\{ g_{k},h_{0}\right\} _{\mathcal{S}}+p_{k-1}=z_{k}+r_{k},\\
\left\{ z_{k},h_{0}\right\} _{\mathcal{S}}=0,\\
\mathrm{ord}\left(r_{k}\right)\ge k+1.
\end{cases}\label{eq:homological_eqn}
\end{equation}
According to \eqref{eq:h0_action}, if $\left(\boldsymbol{K},\boldsymbol{L}\right)\in\mathscr{R}_{m-1}\setminus\mathscr{R}_{m},1\le m\le N-1$,
one can apply
\begin{align}
 & \left\{ -\frac{4\pi i}{A_{m+1}\left(\delta_{m}\left(\boldsymbol{K}\right)-\delta_{m}\left(\boldsymbol{L}\right)\right)}\beta^{\boldsymbol{K}+\sum^{N-1}_{j=m}\boldsymbol{e}_{j}}\bar{\beta}^{\boldsymbol{L}+\sum^{N-1}_{j=m}\boldsymbol{e}_{j}},h_{0}\right\} _{S}\nonumber \\
= & -\beta^{\boldsymbol{K}}\bar{\beta}^{\boldsymbol{L}}-\sum^{N-1}_{l=m+1}\frac{A_{l+1}}{A_{m+1}}\frac{\delta_{l}\left(\boldsymbol{K}\right)-\delta_{l}\left(\boldsymbol{L}\right)}{\delta_{m}\left(\boldsymbol{K}\right)-\delta_{m}\left(\boldsymbol{L}\right)}\boldsymbol{\beta}^{\boldsymbol{K}+\sum^{l-1}_{j=m}\boldsymbol{e}_{j}}\bar{\boldsymbol{\beta}}^{\boldsymbol{L}+\sum^{l-1}_{j=m}\boldsymbol{e}_{j}}\label{eq:partial_sol_homological_1}\\
 & -\frac{4\pi\omega}{A_{m+1}A_{N}}\frac{\delta_{N}\left(\boldsymbol{K}\right)-\delta_{N}\left(\boldsymbol{L}\right)}{\delta_{m}\left(\boldsymbol{K}\right)-\delta_{m}\left(\boldsymbol{L}\right)}\beta^{\boldsymbol{K}+\sum^{N-1}_{j=m}\boldsymbol{e}_{j}}\bar{\beta}^{\boldsymbol{L}+\sum^{N-1}_{j=m}\boldsymbol{e}_{j}}\nonumber 
\end{align}
to cancel $\beta^{\boldsymbol{K}}\bar{\beta}^{\boldsymbol{L}}$ in
$p_{k-1}$. When $\left(\boldsymbol{K},\boldsymbol{L}\right)\in\mathscr{R}_{N-1}\setminus\mathscr{R}_{N}$
we apply 
\begin{equation}
\left\{ -\frac{iA_{N}}{\omega\left(\delta_{N}\left(\boldsymbol{K}\right)-\delta_{N}\left(\boldsymbol{L}\right)\right)}\beta^{\boldsymbol{K}}\bar{\beta}^{\boldsymbol{L}},h_{0}\right\} _{S}=-\beta^{\boldsymbol{K}}\bar{\beta}^{\boldsymbol{L}}.\label{eq:partial_sol_homological_2}
\end{equation}
Thus we obtain the following proposition to solve the modified homological
equation \eqref{eq:homological_eqn}.
\begin{prop}
\label{prop:full_sol_homological_eqn}Given a  $k$-th order homogenous polynomial
\[
p_{k-1}=\sum_{\left|\boldsymbol{K}\right|+\left|\boldsymbol{L}\right|=k}c_{\boldsymbol{K},\boldsymbol{L}}\boldsymbol{\beta}^{\boldsymbol{K}}\bar{\boldsymbol{\beta}}^{\boldsymbol{L}}
\]
with all exponents pair $\left(\boldsymbol{K},\boldsymbol{L}\right)\in\mathscr{R}_{0}$,
then one solution to the modified homological equation \eqref{eq:homological_eqn}
is 
\begin{align*}
g_{k} & =\sum^{N}_{m=1}c_{\boldsymbol{K},\boldsymbol{L}}g_{k,m},\\
r_{k} & =\sum^{N}_{m=1}c_{\boldsymbol{K},\boldsymbol{L}}r_{k,m},\\
z_{k} & =\sum_{\left(\boldsymbol{K},\boldsymbol{L}\right)\in\mathscr{R}_{N}}c_{\boldsymbol{K},\boldsymbol{L}}\boldsymbol{\beta}^{\boldsymbol{K}}\bar{\boldsymbol{\beta}}^{\boldsymbol{L}},
\end{align*}
where {\small
\begin{align*}
g_{k,m} & =\sum_{\left(\boldsymbol{K},\boldsymbol{L}\right)\in\mathscr{R}_{m-1}\setminus\mathscr{R}_{m}}-\frac{4\pi i\boldsymbol{\beta}^{\boldsymbol{K}+\boldsymbol{E}_{m}}\bar{\boldsymbol{\beta}}^{\boldsymbol{L}+\boldsymbol{E}_{m}}}{A_{m+1}\left(\delta_{m}\left(\boldsymbol{K}\right)-\delta_{m}\left(\boldsymbol{L}\right)\right)},\quad1\le m\le N-1,\\
g_{k,N} & =\sum_{\left(\boldsymbol{K},\boldsymbol{L}\right)\in\mathscr{R}_{N-1}\setminus\mathscr{R}_{N}}-\frac{iA_{N}\boldsymbol{\beta}^{\boldsymbol{K}}\bar{\boldsymbol{\beta}}^{\boldsymbol{L}}}{\omega\left(\delta_{N}\left(\boldsymbol{K}\right)-\delta_{N}\left(\boldsymbol{L}\right)\right)},\\
r_{k,m} & =\sum_{\left(\boldsymbol{K},\boldsymbol{L}\right)\in\mathscr{R}_{m-1}\setminus\mathscr{R}_{m}}\sum^{N-1}_{l=m+1}-\frac{A_{l+1}}{A_{m+1}}\frac{\delta_{l}\left(\boldsymbol{K}\right)-\delta_{l}\left(\boldsymbol{L}\right)}{\delta_{m}\left(\boldsymbol{K}\right)-\delta_{m}\left(\boldsymbol{L}\right)}\boldsymbol{\beta}^{\boldsymbol{K}+\boldsymbol{E}_{m}-\boldsymbol{E}_{l}}\bar{\boldsymbol{\beta}}^{\boldsymbol{L}+\boldsymbol{E}_{m}-\boldsymbol{E}_{l}}\\
 & \qquad-\frac{4\pi\omega}{A_{m+1}A_{N}}\frac{\delta_{N}\left(\boldsymbol{K}\right)-\delta_{N}\left(\boldsymbol{L}\right)}{\delta_{m}\left(\boldsymbol{K}\right)-\delta_{m}\left(\boldsymbol{L}\right)}\boldsymbol{\beta}^{\boldsymbol{K}+\boldsymbol{E}_{m}}\bar{\boldsymbol{\beta}}^{\boldsymbol{L}+\boldsymbol{E}_{m}},\quad1\le m\le N-1,\\
r_{k,N} & =0.
\end{align*}
}{\small\par}
\end{prop}
One expects that at the $k$-th step
\begin{align*}
H_{k} & =H_{k-1}\circ\phi_{k}\\
 & =h_{0}+Z_{k-1}+R_{k-1}+\left\{ g_{k},h_{0}\right\} _{\mathcal{S}}\\
 & \quad+\sum_{l\ge1}\frac{1}{\left(l+1\right)!}\left\{ g_{k},\left\{ g_{k},\cdots\left\{ g_{k},h_{0}\right\} _{\mathcal{S}}\right\} _{\mathcal{S}}\right\} _{\mathcal{S}}\\
 & \quad+\sum_{l\ge1}\frac{1}{l!}\left\{ g_{k},\left\{ g_{k},\cdots\left\{ g_{k},Z_{k-1}\right\} _{\mathcal{S}}\right\} _{\mathcal{S}}\right\} _{\mathcal{S}}\\
 & \quad+\sum_{l\ge1}\frac{1}{l!}\left\{ g_{k},\left\{ g_{k},\cdots\left\{ g_{k},R_{k-1}\right\} _{\mathcal{S}}\right\} _{\mathcal{S}}\right\} _{\mathcal{S}}.
\end{align*}
Let $Z_k = Z_{k-1}+z_k$. Since $Z_{k-1}+R_{k-1}+\left\{ g_{k},h_{0}\right\} _{\mathcal{S}}=Z_{k}+R_{k-1}-p_{k-1}+r_{k}$,
we have 
\begin{align}
R_{k} & =H_{k}-h_{0}-Z_{k-1}-z_{k}\nonumber \\
 & =R_{k-1}-p_{k-1}+r_{k}\label{eq:Rk_1}\\
 & \quad+\sum_{l\ge1}\frac{1}{l!}\left\{ g_{k},\left\{ g_{k},\cdots\left\{ g_{k},Z_{k-1}\right\} _{\mathcal{S}}\right\} _{\mathcal{S}}\right\} _{\mathcal{S}}\label{eq:Rk_2}\\
 & \quad+\sum_{l\ge1}\frac{1}{\left(l+1\right)!}\left\{ g_{k},\left\{ g_{k},\cdots\left\{ g_{k},z_{k}-p_{k-1}+r_{k}\right\} _{\mathcal{S}}\right\} _{\mathcal{S}}\right\} _{\mathcal{S}}\label{eq:Rk_3}\\
 & \quad+\sum_{l\ge1}\frac{1}{l!}\left\{ g_{k},\left\{ g_{k},\cdots\left\{ g_{k},R_{k-1}\right\} _{\mathcal{S}}\right\} _{\mathcal{S}}\right\} _{\mathcal{S}}.\label{eq:Rk_4}
\end{align}
One has $\mathrm{ord}\left(R_{k-1}-p_{k-1}\right) = k+1, \mathrm{ord}\left(r_{k}\right) \ge k+2$, so that
\[
\mathrm{ord}\left(R_{k-1}-p_{k-1}+r_{k}\right) = k+1.
\]
Noting that by induction assumption $\mathscr{E}\left(Z_{k-1}\right)\subset\mathscr{C}$,
so that according to Proposition \ref{prop:full_sol_homological_eqn},
all the exponents of $z_{k},g_{k},r_{k}$ belong to $\mathscr{C}$.
Hence it follows that
\[
\mathscr{E}\left(Z_{k}\right) = \mathscr{E}\left(Z_{k-1}+z_{k}\right)\subset\mathscr{C}
\]
and for \eqref{eq:Rk_1}
\[
\mathscr{E}\left(R_{k-1}-p_{k-1}+r_{k}\right)\subset\mathscr{C}.
\]
By the previous calculation \eqref{eq:poly_action} and noting that
$\delta_{l}\left(\boldsymbol{K}+\boldsymbol{E}_{m}\right)-\delta_{l}\left(\boldsymbol{L}+\boldsymbol{E}_{m}\right)=0$
for $l\le m-1,\left(\boldsymbol{K},\boldsymbol{L}\right)\in\mathscr{R}_{m-1}\setminus\mathscr{R}_{m}$,
we have for $\left(\boldsymbol{K},\boldsymbol{L}\right)\in\mathscr{R}_{m-1}\setminus\mathscr{R}_{m}$
\begin{align*}
 & \left\{ \beta^{\boldsymbol{K}+\boldsymbol{E}_{m}}\bar{\beta}^{\boldsymbol{L}+\boldsymbol{E}_{m}},\boldsymbol{\beta}^{\boldsymbol{M}}\bar{\boldsymbol{\beta}}^{\boldsymbol{M}}\right\} _{\mathcal{S}}\\
= & \sum^{N-1}_{l=m}c_{l}\left(\delta_{l}\left(\boldsymbol{K}\right)-\delta_{l}\left(\boldsymbol{L}\right)\right)\delta_{l}\left(\boldsymbol{M}\right)\boldsymbol{\beta}^{\boldsymbol{K}+\boldsymbol{M}+\boldsymbol{E}_{m}-\boldsymbol{E}_{l}}\bar{\boldsymbol{\beta}}^{\boldsymbol{L}+\boldsymbol{M}+\boldsymbol{E}_{m}-\boldsymbol{E}_{l}}\\
 & +c_{N}\left(\delta_{N}\left(\boldsymbol{K}\right)-\delta_{N}\left(\boldsymbol{L}\right)\right)\delta_{N}\left(\boldsymbol{M}\right)\boldsymbol{\beta}^{\boldsymbol{K}+\boldsymbol{M}+\boldsymbol{E}_{m}-\boldsymbol{e}_{N}}\bar{\boldsymbol{\beta}}^{\boldsymbol{L}+\boldsymbol{M}+\boldsymbol{E}_{m}-\boldsymbol{e}_{N}},
\end{align*}
where we apply the fact 
\[
\delta_{l}\left(\boldsymbol{K}+\boldsymbol{E}_{m}\right)-\delta_{l}\left(\boldsymbol{L}+\boldsymbol{E}_{m}\right)=\delta_{l}\left(\boldsymbol{K}\right)-\delta_{l}\left(\boldsymbol{L}\right).
\]
Then the possible minimum degree terms of $\left\{ g_{l,m},\boldsymbol{\beta}^{\boldsymbol{M}}\bar{\boldsymbol{\beta}}^{\boldsymbol{M}}\right\} _{\mathcal{S}}$
are 
\begin{align*}
c_{m}\left(\delta_{m}\left(\boldsymbol{K}\right)-\delta_{m}\left(\boldsymbol{L}\right)\right)\delta_{m}\left(\boldsymbol{M}\right)\boldsymbol{\beta}^{\boldsymbol{K}+\boldsymbol{M}}\bar{\boldsymbol{\beta}}^{\boldsymbol{L}+\boldsymbol{M}},\quad & \text{when }m\le N-1,\\
c_{N}\left(\delta_{N}\left(\boldsymbol{K}\right)-\delta_{N}\left(\boldsymbol{L}\right)\right)\delta_{N}\left(\boldsymbol{M}\right)\boldsymbol{\beta}^{\boldsymbol{K}+\boldsymbol{M}-\boldsymbol{e}_{N}}\bar{\boldsymbol{\beta}}^{\boldsymbol{L}+\boldsymbol{M}-\boldsymbol{e}_{N}},\quad & \text{when }m=N.
\end{align*}
Since $\boldsymbol{M}\neq\boldsymbol{e}_{N}$ if $\left|\boldsymbol{M}\right|=1$,
one must has $\delta_{N}\left(\boldsymbol{M}\right)=M_{N}=0$. Therefore
the case $m=N$ survives only when $\left|\boldsymbol{M}\right|\ge2$.
So that in both cases the degree is greater or equal to $k+2$, i.e.
\[
\mathrm{ord}\left(\left\{ g_{k},Z_{k-1}\right\} _{\mathcal{S}}\right)\ge k+2.
\]
Then $\mathrm{ord}\left(\eqref{eq:Rk_2}\right) \ge k + 2$ is true by applying the following Lemma (the proof is in Appendix \ref{sec:proof_oder_increase}) to \eqref{eq:Rk_2}
\begin{lem}
\label{claim:oder_increase}If $\mathscr{E}\left(R_{k-1}\right)\subset\mathscr{C}$
and $g_{k}$ is obtained via Proposition \ref{prop:full_sol_homological_eqn},
then for any polynomial $f$, it holds that 
\[
\mathrm{ord}\left(\left\{ g_{k},f\right\} \right)\ge\mathrm{ord}\left(f\right)+1.
\]
\end{lem}
To prove that $\mathscr{E}\left(\eqref{eq:Rk_2}\right)\subset\mathscr{C}$, we just need the Lemma below (see its proof in Appendix \ref{sec:proof_remainder_structure})
\begin{lem}
\label{claim:remainder_structure} If $\left(\boldsymbol{K},\boldsymbol{L}\right)\in\mathscr{C}\cap\left(\mathscr{R}_{m-1}\setminus\mathscr{R}_{m}\right)$
for some $1\le m\le N$, then for any $\left(\boldsymbol{K}',\boldsymbol{L}'\right)\in\mathscr{C}$
it holds that 
\[
\mathscr{E}\left(\left\{ \boldsymbol{\beta}^{\boldsymbol{K}+\boldsymbol{E}_{m}}\bar{\boldsymbol{\beta}}^{\boldsymbol{L}+\boldsymbol{E}_{m}},\boldsymbol{\beta}^{\boldsymbol{K}'}\bar{\boldsymbol{\beta}}^{\boldsymbol{L}'}\right\} _{\mathcal{S}}\right)\subset\mathscr{C}.
\]
\end{lem}
For \eqref{eq:Rk_3} and \eqref{eq:Rk_4}, by induction assumption and Proposition \ref{prop:full_sol_homological_eqn}, we already know that 
\[
\mathscr{E}\left(z_k-p_{k-1}+r_k\right)\subset\mathscr{C},\quad \mathscr{E}\left(R_{k-1}\right)\subset\mathscr{C}.
\]
Thus by applying Lemma \ref{claim:oder_increase} and Lemma \ref{claim:remainder_structure}, it holds that 
\[
\mathrm{ord}\left(\eqref{eq:Rk_3}, \eqref{eq:Rk_4}\right) \ge k+1, \quad \mathscr{E}\left(\eqref{eq:Rk_3}, \eqref{eq:Rk_4}\right)\subset\mathscr{C}.
\]
Then applying Lemma \ref{claim:oder_increase} to \eqref{eq:Rk_3} and \eqref{eq:Rk_4} yields
\[
\mathrm{ord}\left(R_{k}\right)\ge k+1.
\]
Summarizing the above discussion, the formal normal form at $k$-th
step is valid and 
\[
\mathscr{E}\left(R_{k}\right)\subset\mathscr{C},\mathscr{E}\left(Z_{k}\right)\subset\mathscr{C}.
\]
So that the normal form construction can proceed by applying Lemma
\ref{claim:oder_increase} and \ref{claim:remainder_structure} repeatedly.
Then Lemma \ref{lem:normal_form_formal} is proved by defining recursively
\[
T_{n}=T_{n-1}\circ\phi_{n},\cdots,T_{1}=\phi_{1}.
\]
\end{proof}

\section{A rigorous Birkhoff theorem}

In this section, we show the domain where the Lie transform in Lemma
\ref{lem:normal_form_formal} is well-defined as well as the corresponding
deformation estimates. Most importantly, we will state the rigorous
Birkhoff type theorem. Given $\delta_{1},\delta_{2}>0$, define the
space 
\[
U_{\delta_{1},\delta_{2}}:=\left(\bigcap^{N-1}_{k=1}\left\{ \delta^{N-k}_{2}\le\left|\alpha_{k}\right|,\left|\bar{\alpha}_{k}\right|\le\delta^{N-k}_{1}\right\} \right)\cap\left\{ \left|\alpha_{N}\right|,\left|\bar{\alpha}_{N}\right|\le\delta_{1}\right\} .
\]
First, let us define the domain where the Lie transform is well-defined.
\begin{lem}
\label{lem:g_domain} Assuming that the $k-1$-th normal form step
\[
H_{k-1}=h_{0}+Z_{k-1}+R_{k-1}
\]
holds, then for $g_{k}$ obtained by solving the modified homological
equation \eqref{eq:homological_eqn}, there exist a small enough $\varepsilon_{k}$
and positive constant $C_{k}$ such that for all $0<\varepsilon\le\varepsilon_{k}$
and $0\le\lambda\le1$, $g_{k}$ as a function of $\boldsymbol{\alpha},\bar{\boldsymbol{\alpha}}$
defines a Lie transform $\phi_{k}:U_{\varepsilon,\frac{\varepsilon}{2+\lambda}}\to U_{2\varepsilon,\frac{\varepsilon}{4+\lambda}}$
and the following deformation estimates hold
\[
\begin{cases}
\left|\left(\phi_{k}\left(\boldsymbol{\alpha},\bar{\boldsymbol{\alpha}}\right)\right)_{l}-\alpha_{l}\right|\le C_{k}\varepsilon^{k+N+l-2M\left(k,l\right)}, & 1\le l\le N-1,\\
\left|\left(\phi_{k}\left(\boldsymbol{\alpha},\bar{\boldsymbol{\alpha}}\right)\right)_{N}-\alpha_{N}\right|\le C_{k}\varepsilon^{k-1}, & l=N.
\end{cases}
\]
and 
\[
\begin{cases}
\left|\left(\overline{\phi_{k}\left(\boldsymbol{\alpha},\bar{\boldsymbol{\alpha}}\right)}\right)_{l}-\bar{\alpha}_{l}\right|\le C_{k}\varepsilon^{k+N+l-2M\left(k,l\right)}, & 1\le l\le N-1,\\
\left|\left(\overline{\phi_{k}\left(\boldsymbol{\alpha},\bar{\boldsymbol{\alpha}}\right)}\right)_{N}-\bar{\alpha}_{N}\right|\le C_{k}\varepsilon^{k-1}, & l=N.
\end{cases}
\]
where $M\left(k,l\right)=\min\left(N,l+\left\lfloor \frac{k-1}{2}\right\rfloor \right)$. 
\end{lem}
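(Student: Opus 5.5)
The plan is the standard Lie-transform flow argument. I would estimate the Hamiltonian vector field $\boldsymbol{X}_{g_k}$ on a slightly enlarged domain and then integrate it for time $s\in[0,1]$. The integration uses a bootstrap (continuity) argument: the trajectory starting in $U_{\varepsilon,\frac{\varepsilon}{2+\lambda}}$ stays in $U_{2\varepsilon,\frac{\varepsilon}{4+\lambda}}$ for all $s\le1$. The displacement bounds then follow by integrating the velocity bound.

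\textbf{Step 1 (vector field bound).} In $(\boldsymbol{\alpha},\bar{\boldsymbol{\alpha}})$ coordinates,
\[
\dot\alpha_l=\frac{i}{A_l\kappa_{l+1}}\partial_{\bar\alpha_l}g_k\quad(\text{up to sign}).
\]
By Proposition \ref{prop:full_sol_homological_eqn}, $g_k$ is a finite sum of monomials $\boldsymbol{\beta}^{\boldsymbol{K}+\boldsymbol{E}_m}\bar{\boldsymbol{\beta}}^{\boldsymbol{L}+\boldsymbol{E}_m}$ with $|\boldsymbol{K}|+|\boldsymbol{L}|=k$ and $(\boldsymbol{K},\boldsymbol{L})\in\mathscr{C}\cap(\mathscr{R}_{m-1}\setminus\mathscr{R}_m)$.

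\begin{itemize}
\item Rewrite each monomial in $\alpha$ using $\beta_i=\alpha_i/\alpha_{i+1}$. The $\boldsymbol{E}_m$ shift multiplies it by $\alpha_m\bar\alpha_m$, since $\prod_{j\ge m}\beta_j=\alpha_m$.
\item Compute $\partial_{\bar\alpha_l}$. A derivative of a Laurent monomial is the monomial divided by $\bar\alpha_l$, times an integer exponent factor.
\item On $U_{2\varepsilon,\frac{\varepsilon}{4+\lambda}}$ one has $|\alpha_j|\sim\varepsilon^{N-j}$. Hence $|\beta_i|\sim\varepsilon$ for $i\le N-1$ and $|\beta_N|\lesssim\varepsilon$, with constants depending on $\lambda\in[0,1]$ only through fixed factors.
\item Each monomial is therefore bounded by $C\varepsilon^{k}\,\varepsilon^{2(N-m)}$ (with $|\alpha_N|$-powers $\le\varepsilon$), divided by $|\alpha_l|\sim\varepsilon^{N-l}$.
\end{itemize}

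This gives $|\dot\alpha_l|\lesssim\varepsilon^{k+2(N-m)-(N-l)}=\varepsilon^{k+N+l-2m}$. The exponent is worst when $m$ is largest.

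\textbf{Step 2 (the exponent $M(k,l)$).} This is the main obstacle. I must show that only levels $m\le M(k,l)=\min(N,l+\lfloor\frac{k-1}{2}\rfloor)$ contribute nontrivially to $\partial_{\bar\alpha_l}g_k$. For $m\le l$ this is automatic.

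For $m>l$, the monomial has $K_j=L_j$ for $j\le m-1$, and by the definition of $\mathscr{C}$ these values are $\neq1$. Since $\mathrm{supp}(\boldsymbol{K}+\boldsymbol{L})$ is connected and reaches index $m$, the indices between the first nonzero index and $m-1$ each carry $K_j=L_j\ge2$. Such a block therefore costs at least $2$ in total degree per index.

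There are two cases.
\begin{itemize}
\item If the support starts before or at $l$, the block from $l$ to $m-1$ forces $2(m-l)+1\le k$, i.e. $m\le l+\lfloor\frac{k-1}{2}\rfloor$.
\item If the support starts after $l$, the monomial's $\alpha$-dependence does not involve $\alpha_l$ at all. I would check this via the telescoping form $\boldsymbol{\beta}^{\boldsymbol{K}}=\prod\alpha_j^{K_j-K_{j-1}}$. The contribution from $\alpha_m\bar\alpha_m$ also needs care: it is $\bar\alpha_l$-independent for $m\ne l$. So the derivative vanishes.
\end{itemize}

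I expect the bookkeeping of which $\alpha_l$ actually appear after telescoping to be the delicate point. The $N$-th component is treated separately: there $\partial_{\bar\alpha_N}$ loses at most one power, giving $\varepsilon^{k-1}$.

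\textbf{Step 3 (bootstrap).} Integrating over $s\in[0,1]$ gives $|\alpha_l(s)-\alpha_l(0)|\le C_k\varepsilon^{k+N+l-2M(k,l)}$. It remains to check that this displacement is $o(\varepsilon^{N-l})$ relative to the gap between the radii $\frac{\varepsilon}{2+\lambda}$ and $\frac{\varepsilon}{4+\lambda}$ (respectively $\varepsilon$ and $2\varepsilon$) at level $N-l$.

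\begin{itemize}
\item For $l\le N-1$ this needs $k+N+l-2M>N-l$, i.e. $k+2l>2M$. This holds since $2M\le 2l+k-1$.
\item For $l=N$, the displacement $\varepsilon^{k-1}$ must be $\ll\varepsilon$. This is fine for $k\ge3$. For small $k$ I would use the refined count that the $\alpha_N$ derivative really carries an extra $\varepsilon$, shrinking $\varepsilon_k$ as needed.
\end{itemize}

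A standard continuity argument then keeps the flow in the enlarged domain, so $\phi_k$ is well defined. The conjugate components follow identically by symmetry.
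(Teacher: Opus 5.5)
Your Steps 1--2 and the bootstrap for the components $1\le l\le N-1$ follow the paper's proof. The case split in Step 2 is exactly Proposition \ref{prop:index_inequality} combined with the cutoff $\theta(l-i_{\boldsymbol{L},m})$, and your continuity argument is the paper's $s^*$ argument. Incidentally, the condition $K_j=L_j\neq 1$ is not needed in Step 2, since $K_j=L_j\ge 1$ already costs $2$ per index.

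The genuine gap is the $N$-th component for $k=1,2$. Your ``refined count'' giving an extra $\varepsilon$ cannot come from the $\mathscr{C}$-bookkeeping. Pure $\alpha_N$-monomials such as $\bar\beta_N$ or $\bar\beta_N^{2}$, i.e. $(\boldsymbol{K},\boldsymbol{L})=(\boldsymbol{0},\boldsymbol{e}_N)$ or $(\boldsymbol{0},2\boldsymbol{e}_N)$, lie in $\mathscr{C}\cap(\mathscr{R}_{N-1}\setminus\mathscr{R}_N)$. They carry no factor $\alpha_m\bar\alpha_m$, since $\boldsymbol{E}_N=\boldsymbol{0}$. They would put a term $c\,\bar\alpha_N$ into $g_1$, so $(X_{g_1})_N=O(1)$. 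Or they would put $c\,\bar\alpha_N^{2}$ into $g_2$, so $\dot\alpha_N\propto\bar\alpha_N$, which can change $|\alpha_N|$ by a factor $e^{O(1)}$ over $s\in[0,1]$. Even granting one extra power of $\varepsilon$, at $k=1$ you only get a displacement $C_1\varepsilon$ against the available room $\varepsilon$ (from $|\alpha_N|\le\varepsilon$ to $|\alpha_N|\le 2\varepsilon$). Both scale linearly in $\varepsilon$, so shrinking $\varepsilon_1$ does not help, and $\phi_1\bigl(U_{\varepsilon,\frac{\varepsilon}{2+\lambda}}\bigr)\subset U_{2\varepsilon,\frac{\varepsilon}{4+\lambda}}$ is not established.

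What is missing is a structural input from $H_\Omega$ itself, not encoded in $\mathscr{C}$, which the paper uses. Every monomial of $R_0$ involving $\beta_N$ or $\bar\beta_N$ has degree at least $3$, for three reasons:
\begin{itemize}
\item $\gamma$ has no linear terms because $0$ is the stationary point.
\item The quadratic part in $\zeta_N$ has been normalized into $h_0$ as $-\omega\alpha_N\bar\alpha_N$ via $L'$.
\item The quadratic cross terms between $w_N$ and $w_{N-1}$ cancel because $\sum_k a_k(z_k-w_N)=0$.
\end{itemize}
Hence $p_0$ has $K_N=L_N=0$, and so does $g_1$. Then the $N$-th term in \eqref{eq:poly_action} vanishes for $\{g_1,\cdot\}$, and together with Proposition \ref{prop:full_sol_homological_eqn} one checks that the degree-$2$ part $p_1$ of $R_1$ also has $K_N=L_N=0$. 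Consequently $g_1,g_2$ do not depend on $(\alpha_N,\bar\alpha_N)$, and $\phi_1,\phi_2$ leave $\alpha_N$ exactly fixed. This is how the paper closes the case $l=N$, $k\le 2$; for $k\ge 3$ your bound $\varepsilon^{k-1}\le\varepsilon^{2}$ suffices.
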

\begin{proof}
For $\left(\boldsymbol{K},\boldsymbol{L}\right)\in\mathscr{R}_{m-1}\setminus\mathscr{R}_{m}$,
rewriting $f_{\boldsymbol{K},\boldsymbol{L}}=\boldsymbol{\beta}^{\boldsymbol{K}+\boldsymbol{E}_m}\bar{\boldsymbol{\beta}}^{\boldsymbol{L}+\boldsymbol{E}_m}$
in terms of $\boldsymbol{\alpha},\bar{\boldsymbol{\alpha}}$ gives
\[
f_{\boldsymbol{K},\boldsymbol{L}}=\begin{cases}
\boldsymbol{\alpha}^{\boldsymbol{K}-\widehat{\boldsymbol{K}}+\boldsymbol{e}_{m}}\bar{\boldsymbol{\alpha}}^{\boldsymbol{L}-\widehat{\boldsymbol{L}}+\boldsymbol{e}_{m}}, & 1\le m\le N-1,\\
\boldsymbol{\alpha}^{\boldsymbol{K}-\widehat{\boldsymbol{K}}}\bar{\boldsymbol{\alpha}}^{\boldsymbol{L}-\widehat{\boldsymbol{L}}}, & m=N,
\end{cases}
\]
where
\[
\widehat{\boldsymbol{K}}=\left(0,K_{1},K_{2},\cdots,K_{N-2},0\right).
\]
Taking derivative w.r.t $\bar{\alpha}_{l}$ and rewriting it in $\left(\boldsymbol{\beta},\bar{\boldsymbol{\beta}}\right)$
yields for some $1\le m\le N$, 
\[
\partial_{\bar{\alpha}_{l}}f_{\boldsymbol{K},\boldsymbol{L}}=\begin{cases}
c_{l,\boldsymbol{L}}\boldsymbol{\beta}^{\boldsymbol{K}+\boldsymbol{E}_{m}}\bar{\boldsymbol{\beta}}^{\boldsymbol{L}+\boldsymbol{E}_{m}-\boldsymbol{E}_{l}}, & i_{\boldsymbol{L},m}\le l\le N-1,\\
c_{N,\boldsymbol{L}}\boldsymbol{\beta}^{\boldsymbol{K}+\boldsymbol{E}_{m}}\bar{\boldsymbol{\beta}}^{\boldsymbol{L}+\boldsymbol{E}_{m}-\boldsymbol{e}_{N}}, & l=N.
\end{cases}
\]
Indeed $\partial_{\bar{\alpha}_{l}}f_{\boldsymbol{K},\boldsymbol{L}}$
is the $l$-th component of the Hamiltonian vector field generated
by $f_{\boldsymbol{K},\boldsymbol{L}}$. Then the $l$-th component
of $X_{g_{k}}$ is for $1\le l\le N-1,$
\begin{equation}
\left(X_{g_{k}}\right)_{l}=\sum^{N}_{m=1}\sum_{\left(\boldsymbol{K},\boldsymbol{L}\right)\in\mathscr{R}_{m-1}\setminus\mathscr{R}_{m}}c_{\boldsymbol{K},\boldsymbol{L}}c_{l,\boldsymbol{L}}\boldsymbol{\beta}^{\boldsymbol{K}+\boldsymbol{E}_{m}}\bar{\boldsymbol{\beta}}^{\boldsymbol{L}+\boldsymbol{E}_{m}-\boldsymbol{E}_{l}}\theta\left(l-i_{\boldsymbol{L},m}\right),\label{eq:Ham_vector_field_1}
\end{equation}
and for $l=N$,
\begin{equation}
\left(X_{g_{k}}\right)_{N}=\sum^{N}_{m=1}\sum_{\left(\boldsymbol{K},\boldsymbol{L}\right)\in\mathscr{R}_{m-1}\setminus\mathscr{R}_{m}}c_{\boldsymbol{K},\boldsymbol{L}}c_{l,\boldsymbol{L}}\boldsymbol{\beta}^{\boldsymbol{K}+\boldsymbol{E}_{m}}\bar{\boldsymbol{\beta}}^{\boldsymbol{L}+\boldsymbol{E}_{m}-\boldsymbol{e}_{N}},\label{eq:Ham_vector_field_2}
\end{equation}
where 
\[
\theta\left(x\right)=\begin{cases}
1, & x\ge0,\\
0, & x<0,
\end{cases}
\]
and $i_{\boldsymbol{L},m}$ is the index of the first non-zero entry
of 
\[
\begin{cases}
\boldsymbol{L}-\widehat{\boldsymbol{L}}+\boldsymbol{e}_{m}, & 1\le m\le N-1,\\
\boldsymbol{L}-\widehat{\boldsymbol{L}}, & m=N.
\end{cases}
\]
Let 
\[
s^{*}=\inf_{\left(\boldsymbol{\alpha},\bar{\boldsymbol{\alpha}}\right)\in U_{\varepsilon,\frac{\varepsilon}{2+\lambda}}}\sup\left\{ s>0:\phi^{\tau}_{k}\left(\boldsymbol{\alpha},\bar{\boldsymbol{\alpha}}\right)\in U_{2\varepsilon,\frac{\varepsilon}{4+2\lambda}},\forall\tau\le s\right\} .
\]
Then with \eqref{eq:Ham_vector_field_1} and \eqref{eq:Ham_vector_field_2},
we can measure the size of $X_{g_{k}}\left(\phi^{s}_{k}\left(\boldsymbol{\alpha},\bar{\boldsymbol{\alpha}}\right)\right)$
for any $s\le s^{*},\left(\boldsymbol{\alpha},\bar{\boldsymbol{\alpha}}\right)\in U_{\varepsilon,\frac{\varepsilon}{2+\lambda}}$.
For $1\le l\le N-1$, we have $\left(\boldsymbol{K},\boldsymbol{L}\right)\in\left(\mathscr{R}_{m-1}\setminus\mathscr{R}_{m}\right)\cap\mathscr{C}$
by Lemma \ref{lem:normal_form_formal}. Applying Proposition \ref{prop:index_inequality}
gives 
\[
k-2m+2i_{\boldsymbol{L},m}\ge1.
\]
Therefore, $\theta\left(l-i_{\boldsymbol{L},m}\right)=1$ implies
$k-2m+2l\ge1$ and plugging it into \eqref{eq:Ham_vector_field_1}
yields that for $1\le l\le N-1$,
\begin{align*}
 & \left|\left(X_{g_{k}}\left(\phi^{s}_{k}\left(\boldsymbol{\alpha},\bar{\boldsymbol{\alpha}}\right)\right)\right)_{l}\right|\\
\le & C_{k}\sum^{N}_{m=1}\sum_{\left(\boldsymbol{K},\boldsymbol{L}\right)\in\mathscr{R}_{m-1}\setminus\mathscr{R}_{m}}\theta\left(l-i_{\boldsymbol{L},m}\right)\varepsilon^{k+2\left(N-m\right)-\left(N-l\right)}\\
\le & C_{k}\sum^{N}_{m=1}\sum_{k-2m+2l\ge1}\varepsilon^{k+N-2m+l}\\
= & C_{k}\frac{1-\varepsilon^{2M}}{1-\varepsilon^{2}}\varepsilon^{k+N+l-2M\left(k,l\right)},
\end{align*}
where $M\left(k,l\right)=\min\left(N,l+\left\lfloor \frac{k-1}{2}\right\rfloor \right)$.
For small $\varepsilon$, it holds 
\begin{equation}
\left|\left(X_{g_{k}}\left(\phi^{s}_{k}\left(\boldsymbol{\alpha},\bar{\boldsymbol{\alpha}}\right)\right)\right)_{l}\right|\le C_{k}\varepsilon^{k+N+l},\quad1\le l\le N-1.\label{eq:estimate_ham_vector_1}
\end{equation}
Similarly, for the $N$-th component of $X_{g_{k}}\left(\phi^{s}_{k}\left(\boldsymbol{\alpha},\bar{\boldsymbol{\alpha}}\right)\right)$,
we have 
\begin{equation}
\left|\left(X_{g_{k}}\left(\phi^{s}_{k}\left(\boldsymbol{\alpha},\bar{\boldsymbol{\alpha}}\right)\right)\right)_{N}\right|\le C_{k}\frac{1-\varepsilon^{2N}}{1-\varepsilon^{2}}\varepsilon^{k-1}\le C_{k}\varepsilon^{k-1}.\label{eq:estimate_ham_vector_2}
\end{equation}
Again by definition of $s^{*}$, there exist $\left(\boldsymbol{\alpha}^{out},\bar{\boldsymbol{\alpha}}^{out}\right),\left(\boldsymbol{\alpha}^{in},\bar{\boldsymbol{\alpha}}^{in}\right)\in U_{\varepsilon,\frac{\varepsilon}{2+\gamma}}$
such that for $1\le l\le N-1$
\begin{align*}
\left(\phi^{s^{*}}_{k}\left(\boldsymbol{\alpha}^{in},\bar{\boldsymbol{\alpha}}^{in}\right)\right)_{l} & =\left(\frac{\varepsilon}{4+2\lambda}\right)^{N-l},\\
\left(\phi^{s^{*}}_{k}\left(\boldsymbol{\alpha}^{out},\bar{\boldsymbol{\alpha}}^{out}\right)\right)_{l} & =\left(2\varepsilon\right)^{N-l},
\end{align*}
and 
\[
\left(\phi^{s^{*}}_{k}\left(\boldsymbol{\alpha}^{out},\bar{\boldsymbol{\alpha}}^{out}\right)\right)_{N}=2\varepsilon.
\]
With \eqref{eq:Ham_vector_field_1}, it follows that for $1\le l\le N-1$
\begin{align*}
\left(\frac{\varepsilon}{4+2\lambda}\right)^{N-l} & =\left|\left(\phi^{s^{*}}_{k}\left(\boldsymbol{\alpha}^{in},\bar{\boldsymbol{\alpha}}^{in}\right)\right)_{l}\right|\\
 & =\left|\alpha^{in}_{l}+\int^{s^{*}}_{0}\frac{d}{ds}\left(\phi^{s}_{k}\left(\boldsymbol{\alpha}^{in},\bar{\boldsymbol{\alpha}}^{in}\right)\right)_{l}ds\right|\\
 & =\left|\alpha^{in}_{l}+\int^{s^{*}}_{0}\left(X_{g_{k}}\left(\phi^{s}_{k}\left(\boldsymbol{\alpha}^{in},\bar{\boldsymbol{\alpha}}^{in}\right)\right)\right)_{l}ds\right|\\
 & \ge\left(\frac{\varepsilon}{2+\lambda}\right)^{N-l}-C_{k}\varepsilon^{k+N+l-2M\left(k,l\right)}s^{*}
\end{align*}
which implies 
\begin{equation}
s^{*}\ge\frac{1-1/2^{N-l}}{\left(2+\lambda\right)^{N-l}C_{k}\varepsilon^{k+2l-2M\left(k,l\right)}}\ge\frac{1}{2\cdot3^{N}C_{k}\varepsilon^{k+2l-2M\left(k,l\right)}}.\label{eq:s_lower_bd_1}
\end{equation}
Similarly for $\left(\boldsymbol{\alpha}^{out},\bar{\boldsymbol{\alpha}}^{out}\right)$,
one has for $1\le l\le N-1$
\begin{align*}
\left(2\varepsilon\right)^{N-l} & =\left|\left(\phi^{s^{*}}_{k}\left(\boldsymbol{\alpha}^{out},\bar{\boldsymbol{\alpha}}^{out}\right)\right)_{l}\right|\\
 & \le\left|\alpha^{out}_{l}\right|+\left|\int^{s^{*}}_{0}\left(X_{g_{k}}\left(\phi^{s}_{k}\left(\boldsymbol{\alpha}^{out},\bar{\boldsymbol{\alpha}}^{out}\right)\right)\right)_{l}ds\right|\\
 & \le\varepsilon^{N-l}+C_{k}\varepsilon^{k+N+l-2M\left(k,l\right)}s^{*},
\end{align*}
and for $l=N$, it holds that
\begin{align*}
2\varepsilon & =\left|\left(\phi^{s^{*}}_{k}\left(\boldsymbol{\alpha}^{out},\bar{\boldsymbol{\alpha}}^{out}\right)\right)_{N}\right|\\
 & \le\left|\alpha^{out}_{N}\right|+\left|\int^{s^{*}}_{0}\left(X_{g_{k}}\left(\phi^{s}_{k}\left(\boldsymbol{\alpha}^{out},\bar{\boldsymbol{\alpha}}^{out}\right)\right)\right)_{N}ds\right|\\
 & \le\varepsilon+C_{k}\varepsilon^{k-1}s^{*}.
\end{align*}
Then we obtain another two inequalities 
\begin{align}
s^{*} & \ge\frac{2^{N-l}-1}{C_{k}\varepsilon^{k+2l-2M\left(k,l\right)}}\ge\frac{1}{C_{k}\varepsilon^{k+2l-2M\left(k,l\right)}},\quad1\le l\le N-1,\label{eq:s_lower_bd_2}\\
s^{*} & \ge\frac{1}{C_{k}\varepsilon^{k-2}}.\label{eq:s_lower_bd_3}
\end{align}
For all $k\ge1$, we have 
\begin{equation}
k+2l-2M\left(k,l\right)\ge k-2\left\lfloor \frac{k-1}{2}\right\rfloor \ge1.\label{eq:index_estimate}
\end{equation}
Meanwhile according to the structure of $R_{0}$ for $k=1,2$, 
\[
\boldsymbol{K}_{N}=\boldsymbol{L}_{N}=0,
\]
so $\phi_{g}$ does not deform $\alpha_{N},\bar{\alpha}_{N}$. Then
with \eqref{eq:s_lower_bd_1}, \eqref{eq:s_lower_bd_2}, and \eqref{eq:s_lower_bd_3},
there exists $\varepsilon_{k}$ small such that for all $0<\varepsilon\le\varepsilon_{k}$,
it holds that $s^{*}>1$, which means 
\[
\phi_{k}:U_{\varepsilon,\frac{\varepsilon}{2+\lambda}}\to U_{2\varepsilon,\frac{\varepsilon}{4+2\lambda}}
\]
is well-defined. 

For the deformation estimates, it is easy to see that for $1\le l\le N-1$,
we have 
\[
\left|\left(\phi_{k}\left(\boldsymbol{\alpha},\bar{\boldsymbol{\alpha}}\right)\right)_{l}-\alpha_{l}\right|\le\int^{1}_{0}\left|\left(X_{g_{k}}\left(\phi^{s}_{k}\left(\boldsymbol{\alpha},\bar{\boldsymbol{\alpha}}\right)\right)\right)_{l}\right|ds\le C_{k}\varepsilon^{k+N+l-2M\left(k,l\right)},
\]
and 
\[
\left|\left(\phi_{k}\left(\boldsymbol{\alpha},\bar{\boldsymbol{\alpha}}\right)\right)_{N}-\alpha_{N}\right|\le\int^{1}_{0}\left|\left(X_{g_{k}}\left(\phi^{s}_{k}\left(\boldsymbol{\alpha},\bar{\boldsymbol{\alpha}}\right)\right)\right)_{N}\right|ds\le C_{k}\varepsilon^{k-1}.
\]
The proof for the conjugate version is similar. 
\end{proof}

The rigorous Birkhoff type theorem is the following.
\begin{thm}
\label{thm:birkhoff}Given $n\in\mathbb{N}^{+}$, there exist a small
enough constant $\varepsilon_{0}$ and a constant $C_{n}$ such that
for all $0<\varepsilon\le\varepsilon_{0}$, there is a nearly identity
canonical transform $T_{n}:U_{\varepsilon,\frac{\varepsilon}{2}}\to U_{2\varepsilon,\frac{\varepsilon}{4}}$
which puts the Hamiltonian in the Birkhoff normal form up to order
$n$, namely 
\[
H_{n}:=H_\Omega\circ T_{n}=h_{0}+Z_{n}+R_{n},
\]
where $\left\{ Z_{n},H_{0}\right\} _{\mathcal{S}}=0$ and
\[
\left|R_{n}\right|\le C_{n}\varepsilon^{n+1}.
\]
Moreover one has the following deformation estimates: 
\begin{align*}
\left|\left(T_{n}\left(\boldsymbol{\alpha},\bar{\boldsymbol{\alpha}}\right)\right)_{l}-\alpha_{l}\right| & \le C_{n}\varepsilon^{N-l+1},\quad1\le l\le N-1,\\
\left|\left(T_{n}\left(\boldsymbol{\alpha},\bar{\boldsymbol{\alpha}}\right)\right)_{N}-\alpha_{N}\right| & \le C_{n}\varepsilon^{2}.
\end{align*}
\end{thm}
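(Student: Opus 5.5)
The plan is to compose the $n$ Lie transforms from Lemma \ref{lem:normal_form_formal}, $T_{n}=\phi_{1}\circ\phi_{2}\circ\cdots\circ\phi_{n}$, and make each one rigorous using Lemma \ref{lem:g_domain} on a nested family of domains. Set $\lambda_{j}=j/n$ for $j=0,\dots,n$ and use the intermediate domains $U_{\varepsilon_{j}',\varepsilon/(2+\lambda_{j})}$. The outer radii $\varepsilon_{j}'$ grow from $\varepsilon$ to at most $2\varepsilon$, the inner radii shrink from $\varepsilon/2$ to $\varepsilon/3$, and the last step still lands inside $U_{2\varepsilon,\varepsilon/4}$.

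Lemma \ref{lem:g_domain} only states $\phi_{k}:U_{\varepsilon,\varepsilon/(2+\lambda)}\to U_{2\varepsilon,\varepsilon/(4+\lambda)}$, which loses too much to iterate. So I would first sharpen its proof, which in fact gives a much smaller displacement. By \eqref{eq:index_estimate}, component $l$ moves by $O(\varepsilon^{N-l+1})$. Component $N$ moves by $O(\varepsilon^{k-1})$, and for $k=1,2$ it does not move at all because $K_{N}=L_{N}=0$. Relative to the scales $\varepsilon^{N-l}$ and $\varepsilon$, these displacements are $O(\varepsilon)$. Hence for $\varepsilon$ small each $\phi_{k}$ maps $U_{\varepsilon_{k}',\varepsilon/(2+\lambda_{k})}$ into $U_{\varepsilon_{k-1}',\varepsilon/(2+\lambda_{k-1})}$ with $\varepsilon_{k-1}'=\varepsilon_{k}'(1+O(\varepsilon))$. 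The composition is therefore well defined on $U_{\varepsilon,\varepsilon/2}$ with values in $U_{2\varepsilon,\varepsilon/4}$, with $\varepsilon_{0}=\min_{k\le n}\varepsilon_{k}$.

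The deformation estimates follow by telescoping. The map $T_{n}-\mathrm{id}$ is a sum of the $n$ individual displacements, each evaluated at a point of one of the intermediate domains. For $1\le l\le N-1$ each displacement is at most $C_{k}\varepsilon^{k+N+l-2M(k,l)}\le C_{k}\varepsilon^{N-l+1}$, since $k+2l-2M(k,l)\ge1$. For $l=N$ the terms vanish for $k=1,2$ and are $O(\varepsilon^{k-1})\le O(\varepsilon^{2})$ for $k\ge3$. Summing gives the stated bounds with $C_{n}=\sum_{k}C_{k}$. Canonicity holds because each $\phi_{k}$ is a Hamiltonian time-one map, and $\{Z_{n},h_{0}\}_{\mathcal{S}}=0$ is inherited directly from the formal lemma.

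The main obstacle is the remainder bound $|R_{n}|\le C_{n}\varepsilon^{n+1}$. Formally, $R_{n}$ is a series in $(\boldsymbol{\beta},\bar{\boldsymbol{\beta}})$ with $\mathrm{ord}(R_{n})\ge n+1$, and on $U_{2\varepsilon,\varepsilon/4}$ every $|\beta_{i}|\le 8\varepsilon$, since each ratio is bounded by $(2\varepsilon)^{N-i}/(\varepsilon/4)^{N-i-1}$. A monomial of degree $d$ is therefore $O(\varepsilon^{d})$. Two things must be controlled.

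First, the original $R_{0}$ is analytic: the logarithms are analytic in the $\beta_{i}$ near $0$, and $\gamma$ is analytic near the origin. So $R_{0}$ converges absolutely on a polydisc $|\beta_{i}|\le\rho$, and its tail of degree $\ge n+1$ is bounded by $C\varepsilon^{n+1}$ by Cauchy estimates.

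Second, each $R_{k}$ in the recursion is given by the Lie series \eqref{eq:Rk_1}--\eqref{eq:Rk_4}, and these are not polynomial. To avoid convergence issues, I would not expand them. Instead I would write the exact remainder in integral (Taylor) form, $H\circ\phi_{k}=\sum_{j\le n}\frac{1}{j!}\mathcal{L}^{j}_{g_{k}}H+\frac{1}{n!}\int_{0}^{1}(1-s)^{n}\mathcal{L}^{n+1}_{g_{k}}H\circ\phi^{s}_{k}\,ds$. Lemma \ref{claim:oder_increase} gives $\mathrm{ord}(\mathcal{L}^{j}_{g_{k}}f)\ge\mathrm{ord}(f)+j$, so after $n+1$ brackets every term has order $\ge n+1$. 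The pieces are evaluated on the intermediate domains and estimated via the vector-field bounds \eqref{eq:estimate_ham_vector_1}--\eqref{eq:estimate_ham_vector_2}, or equivalently by Cauchy estimates for $\beta$-series on slightly shrunken polydiscs.

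Finally, all degree-$\le n$ terms of the truncated expansion cancel into $h_{0}+Z_{n}$ by the formal lemma. The only leftovers are terms of $\beta$-degree $\ge n+1$ together with the integral remainders, and each of these is $O(\varepsilon^{n+1})$ uniformly on $U_{\varepsilon,\varepsilon/2}$.
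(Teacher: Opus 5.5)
Your construction of the domains and your deformation estimates follow the paper's proof. The paper also composes $T_n=\phi_1\circ\cdots\circ\phi_n$. It uses the deformation estimates already contained in the statement of Lemma~\ref{lem:g_domain} (so no sharpening of that lemma is needed) to show that each $\phi_k$ maps one domain into a slightly enlarged one. Its parameters are defined recursively by $\sigma_{k-1}=\sigma_k(1+C_k\sigma_k)$ and $\lambda_{k-1}$, starting from $\lambda_n=0$, instead of your preset schedule. It then telescopes exactly as you do, using that $\phi_1,\phi_2$ do not move the $N$-th coordinate.

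You go beyond the paper on the bound $|R_n|\le C_n\varepsilon^{n+1}$. The paper's proof never addresses it. It relies implicitly on the formal statement $\mathrm{ord}(R_n)\ge n+1$ of Lemma~\ref{lem:normal_form_formal}, with the Lie series treated as formal objects, together with $|\beta_i|\lesssim\varepsilon$; this is how the bound is used in Section 5. Truncating each Lie series at order $n$ with an integral Taylor remainder is a sound way to make this rigorous. It needs only finitely many brackets of analytic $\boldsymbol{\beta}$-series, each controlled by Lemma~\ref{claim:oder_increase}, and it avoids any convergence question for the full Lie series.

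You should make two points explicit. First, brackets with $g_k$ must map $\boldsymbol{\beta}$-power series to $\boldsymbol{\beta}$-power series, despite the factors $|\alpha_l|^{-2}=\boldsymbol{\beta}^{-\boldsymbol{E}_l}\bar{\boldsymbol{\beta}}^{-\boldsymbol{E}_l}$ ($l\le N-1$) that the bracket produces. This is what the case analysis in the proof of Lemma~\ref{claim:oder_increase} gives for exponents in $\mathscr{C}$. Second, a remainder produced at step $k$ should only be composed with $\phi_{k+1},\dots,\phi_n$, not bracketed again. This is harmless because these maps stay inside $U_{2\varepsilon,\varepsilon/4}$.

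There are also three minor slips:
\begin{enumerate}
\item Since $\phi_n$ acts first on $U_{\varepsilon,\varepsilon/2}$, your schedule should be $\lambda_j=(n-j)/n$, not $j/n$.
\item On $U_{2\varepsilon,\varepsilon/4}$ one only gets $|\beta_i|\le 2^{N-i}4^{N-i-1}\varepsilon$, an $N$-dependent multiple of $\varepsilon$ rather than $8\varepsilon$. This is still $O(\varepsilon)$, so the argument is unaffected.
\item $\varepsilon_0$ should be taken somewhat below $\min_k\varepsilon_k$, because the intermediate outer radii exceed $\varepsilon$.
\end{enumerate}
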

\begin{proof}
By Lemma \ref{lem:normal_form_formal}, we already know all the generating
function $g_{k},k=1,\cdots,n$. Now we construct the domain for each
$g_{k}$ in reverse order. Starting from $g_{n}$, by Lemma \ref{lem:g_domain},
there exists $\varepsilon_{n}$ small enough and a constant $C_{n}$
such that for all $0\le\sigma_{n}\le\varepsilon_{n}$, $\phi_{n}:U_{\sigma_{n},\frac{\sigma_{n}}{2+\lambda_{n}}}\to U_{2\sigma_{n},\frac{\sigma_{n}}{4+2\lambda_{n}}}$
with $\lambda_{n}=0$ is well-defined and 
\[
\begin{cases}
\left|\left(\phi_{n}\left(\boldsymbol{\alpha},\bar{\boldsymbol{\alpha}}\right)\right)_{l}-\alpha_{l}\right|\le C_{n}\sigma^{n+N+l-2M\left(n,l\right)}_{n}, & 1\le l\le N-1,\\
\left|\left(\phi_{n}\left(\boldsymbol{\alpha},\bar{\boldsymbol{\alpha}}\right)\right)_{N}-\alpha_{N}\right|\le C_{n}\sigma^{n-1}_{n}, & l=N.
\end{cases},
\]
where $M\left(n,l\right)=\min\left(N,l+\left\lfloor \frac{n-1}{2}\right\rfloor \right)$.
By the above deformation estimates, the image of $\phi_{n}$ is a
subset of 
\[
U_{\sigma_{n}\left(1+C_{n}\sigma_{n}\right),\frac{\sigma_{n}}{2+\lambda_{n}}\left(1-3^{N}C_{n}\sigma_{n}\right)}.
\]
Indeed, it holds that for $l=N$,
\[
\left|\left(\phi_{n}\left(\boldsymbol{\alpha},\bar{\boldsymbol{\alpha}}\right)\right)_{N}\right|\le\sigma_{n}+C_{n}\sigma^{n-1}_{n}\le\sigma_{n}\left(1+C_{n}\sigma_{n}\right),
\]
and for $1\le l\le N-1$, 
\begin{align*}
 & \left|\left(\phi_{n}\left(\boldsymbol{\alpha},\bar{\boldsymbol{\alpha}}\right)\right)_{l}\right|\\
\le & \sigma^{N-l}_{n}+C_{n}\sigma^{n+N+l-2M\left(n,l\right)}_{n}\\
= & \sigma^{N-l}_{n}\left(1+C_{n}\sigma^{n+2l-2M\left(n,l\right)}_{n}\right)\\
\le & \sigma^{N-l}_{n}\left(1+C_{n}\sigma_{n}\right)^{N-l},
\end{align*}
where we use the fact that $n+2l-2M\left(n,l\right)\ge1$ at the last
step. The following lower bound also holds for $1\le l\le N-1$
\begin{align*}
 & \left|\left(\phi_{n}\left(\boldsymbol{\alpha},\bar{\boldsymbol{\alpha}}\right)\right)_{l}\right|\\
\ge & \frac{\sigma^{N-l}_{n}}{\left(2+\lambda_{n}\right)^{N-l}}-C_{n}\sigma^{n+N+l-2M\left(n,l\right)}_{n}\\
\ge & \frac{\sigma^{N-l}_{n}}{\left(2+\lambda_{n}\right)^{N-l}}\left(1-\left(2+\lambda_{n}\right)^{N-l}C_{n}\sigma_{n}\right)\\
\ge & \frac{\sigma^{N-l}_{n}}{\left(2+\lambda_{n}\right)^{N-l}}\left(1-3^{N}C_{n}\sigma_{n}\right)^{N-l}.
\end{align*}
For small $\sigma_{n}$, letting $\sigma_{n-1}=\sigma_{n}\left(1+C_{n}\sigma_{n}\right)$
and 
\[
\lambda_{n-1}=\frac{\lambda_{n}\left(1+C_{n}\sigma_{n}\right)+2\left(1+3^{N}\right)C_{n}\sigma_{n}}{1-3^{N}C_{n}\sigma_{n}}
\]
gives 
\[
\phi_{n}\left(U_{\sigma_{n},\frac{\sigma_{n}}{2+\lambda_{n}}}\right)\subset U_{\sigma_{n}\left(1+C_{n}\sigma_{n}\right),\frac{\sigma_{n}}{2+\lambda_{n}}\left(1-3^{N}C_{n}\sigma_{n}\right)}=U_{\sigma_{n-1},\frac{\sigma_{n-1}}{2+\lambda_{n-1}}}.
\]
For $\phi_{n-1}$, applying Lemma \ref{lem:g_domain} and assuming
$\varepsilon_{n-1}$ the threshold, for small enough $\sigma_{n}$
we have $\sigma_{n-1}\le\varepsilon_{n-1}$. Thus $\phi_{n-1}$ is
well-defined on $U_{\sigma_{n-1},\frac{\sigma_{n-1}}{2+\lambda_{n-1}}}$
and consequently on $\phi_{n}\left(U_{\sigma_{n},\frac{\sigma_{n}}{2+\lambda_{n}}}\right)$.
We continue this procedure until it ends. Since $n$ is finite, one
obtains a small enough $\varepsilon_{0}$ such that for all $0<\varepsilon\le\varepsilon_{0}$,
the following mapping chain is well-defined, 
\[
U_{\sigma_{n},\frac{\sigma_{n}}{2+\lambda_{n}}}\overset{\phi_{n}}{\longrightarrow}U_{\sigma_{n-1},\frac{\sigma_{n-1}}{2+\lambda_{n-1}}}\overset{\phi_{n-1}}{\longrightarrow}\cdots\overset{\phi_{2}}{\longrightarrow}U_{\sigma_{1},\frac{\sigma_{1}}{2+\lambda_{1}}}\overset{\phi_{1}}{\longrightarrow}U_{\sigma_{0},\frac{\sigma_{0}}{2+\lambda_{0}}}
\]
where $\lambda_{n}=0,\sigma_{n}=\varepsilon$ and $\lambda_{k},\sigma_{k}$
are defined in the above iteration procedure. Overall, $T_{n}=\phi_{1}\circ\phi_{2}\circ\cdots\circ\phi_{n}$
is well-defined on $U_{\varepsilon,\frac{\varepsilon}{2}}$. We choose
$\sigma_{n}=\varepsilon$ so small that all $\sigma_{j},j=1,\cdots,n$
are of order $\varepsilon$ and $\lambda_{j}$ are close to $0$.
Therefore it holds that 
\[
U_{\sigma_{0},\frac{\sigma_{0}}{2+\lambda_{0}}}\subset U_{2\varepsilon,\frac{\varepsilon}{4}}
\]
and $T_{n}:U_{\varepsilon,\frac{\varepsilon}{2}}\to U_{2\varepsilon,\frac{\varepsilon}{4}}$. 

It remains to show the deformation estimates. Let 
\[
\left(\boldsymbol{\alpha}^{\left(k\right)},\bar{\boldsymbol{\alpha}}^{\left(k\right)}\right)=\phi_{k}\circ\phi_{k+1}\circ\cdots\circ\phi_{n}\left(\boldsymbol{\alpha},\bar{\boldsymbol{\alpha}}\right).
\]
Then $\alpha^{\left(k\right)}_{l}=\left(\phi_{k}\circ\phi_{k+1}\circ\cdots\circ\phi_{n}\left(\boldsymbol{\alpha},\bar{\boldsymbol{\alpha}}\right)\right)_{l}$
and for $1\le l\le N-1$, it holds that
\begin{align*}
\left|\left(T_{n}\left(\boldsymbol{\alpha},\bar{\boldsymbol{\alpha}}\right)\right)_{l}-\alpha_{l}\right| & =\left|\left(\phi_{1}\circ\phi_{2}\cdots\circ\phi_{n}\left(\boldsymbol{\alpha},\bar{\boldsymbol{\alpha}}\right)\right)_{l}-\alpha_{l}\right|\\
 & \le\left|\alpha_{l}-\left(\phi_{n}\left(\boldsymbol{\alpha},\bar{\boldsymbol{\alpha}}\right)\right)_{l}\right|+\left|\alpha^{\left(n\right)}_{l}-\left(\phi_{n-1}\left(\boldsymbol{\alpha}^{\left(n\right)},\bar{\boldsymbol{\alpha}}^{\left(n\right)}\right)\right)_{l}\right|\\
 & \quad+\left|\alpha^{\left(n-1\right)}_{l}-\left(\phi_{n-2}\left(\boldsymbol{\alpha}^{\left(n-1\right)},\bar{\boldsymbol{\alpha}}^{\left(n-1\right)}\right)\right)_{l}\right|+\cdots\\
 & \quad+\left|\alpha^{\left(2\right)}_{l}-\left(\phi_{1}\left(\boldsymbol{\alpha}^{\left(2\right)},\bar{\boldsymbol{\alpha}}^{\left(2\right)}\right)\right)_{l}\right|.
\end{align*}
Applying the deformation estimates in Lemma \ref{lem:g_domain} yields
for $1\le l\le N-1$,
\[
\left|\left(T_{n}\left(\boldsymbol{\alpha},\bar{\boldsymbol{\alpha}}\right)\right)_{l}-\alpha_{l}\right|\le\sum^{n}_{k=1}C_{k}\varepsilon^{k+N+l-2M\left(k,l\right)}\le C_{n}\varepsilon^{N-l+1}.
\]
For $l=N$, $\phi_{1},\phi_{2}$ are actually do not move the $N$-th
direction. So in a similarly way one has 
\[
\left|\left(T_{n}\left(\boldsymbol{\alpha},\bar{\boldsymbol{\alpha}}\right)\right)_{N}-\alpha_{N}\right|\le\sum^{n}_{k=3}C_{k}\varepsilon^{k-1}\le C_{n}\varepsilon^{2}.
\]
\end{proof}

\section{Proof of the main Theorem \ref{thm:main_result}}

The proof is standard. We do it for the sake of completeness. Define
the exist time $\tau\left(\boldsymbol{\alpha}_{0},r\right)$ by 
\[
\tau\left(\boldsymbol{\alpha}_{0},r\right):=\inf\left\{ t:\left(\boldsymbol{\alpha}\left(t\right),\bar{\boldsymbol{\alpha}}\left(t\right)\right)\notin U_{r,\frac{r}{2}},\left(\boldsymbol{\alpha}_{0},\bar{\boldsymbol{\alpha}}_{0}\right)\in U_{r,\frac{r}{2}}\right\} ,
\]
where $\boldsymbol{\alpha}\left(t\right)$ is the solution to the
original normalized Hamiltonian system with the Hamiltonian \eqref{eq:Ham}
and $\boldsymbol{\alpha}_{0}$ is its initial condition. We choose
$\varepsilon$ small enough so that for $t<\tau\left(\boldsymbol{\alpha}_{0},2\varepsilon\right)$
it holds that 
\[
\left(\boldsymbol{\alpha}\left(t\right),\bar{\boldsymbol{\alpha}}\left(t\right)\right)\in U_{2\varepsilon,\varepsilon}.
\]
According to the Birkhoff Theorem \ref{thm:birkhoff}, there is a
canonical transform $T_{n}$ such that $T_{n}\left(\boldsymbol{\alpha}',\bar{\boldsymbol{\alpha}}'\right)=\left(\alpha,\bar{\alpha}\right)$
and 
\[
H_{n}\left(\boldsymbol{\alpha}',\bar{\boldsymbol{\alpha}}'\right)=H_\Omega\circ T_{n}\left(\boldsymbol{\alpha}',\bar{\boldsymbol{\alpha}}'\right)=h_{0}\left(\boldsymbol{\alpha}',\bar{\boldsymbol{\alpha}}'\right)+Z_{n}\left(\boldsymbol{\alpha}',\bar{\boldsymbol{\alpha}}'\right)+R_{n}\left(\boldsymbol{\alpha}',\bar{\boldsymbol{\alpha}}'\right).
\]
Meanwhile applying the Birkhoff Theorem \ref{thm:birkhoff} to the
inverse canonical transform $T^{-1}_{n}$, we know that for $t<\tau\left(\boldsymbol{\alpha}_{0},\varepsilon\right)$,
$\left(\boldsymbol{\alpha}'\left(t\right),\bar{\boldsymbol{\alpha}}'\left(t\right)\right)\in U_{4\varepsilon,2\varepsilon}$
and 
\begin{align}
\left|\alpha'_{l}\left(t\right)-\alpha_{l}\left(t\right)\right| & \le C_{n}\varepsilon^{N-l+1},\quad1\le l\le N-1,\nonumber \\
\left|\alpha'_{N}\left(t\right)-\alpha_{N}\left(t\right)\right| & \le C_{n}\varepsilon^{2},\label{eq:alpha_deformation_estimate}
\end{align}
where $\boldsymbol{\alpha}'\left(t\right)$ is the solution to the
system with the Hamiltonian $H_{n}$. Recall that the $l$-th action
variable is simply $I_{l}\left(\boldsymbol{\alpha},\bar{\boldsymbol{\alpha}}\right)=\alpha_{l}\bar{\alpha}_{l}$.
We consider that for $t<\tau\left(\alpha_{0},2\varepsilon\right)$,
\begin{align}
\left|I_{l}\left(\boldsymbol{\alpha}\left(t\right)\right)-I_{l}\left(\boldsymbol{\alpha}\left(0\right)\right)\right| & \le\left|I_{l}\left(\boldsymbol{\alpha}\left(t\right)\right)-I_{l}\left(\boldsymbol{\alpha}'\left(t\right)\right)\right|+\left|I_{l}\left(\boldsymbol{\alpha}\left(0\right)\right)-I_{i}\left(\boldsymbol{\alpha}'\left(0\right)\right)\right|\nonumber \\
 & \quad+\left|I_{l}\left(\boldsymbol{\alpha}'\left(t\right)\right)-I_{l}\left(\boldsymbol{\alpha}'\left(0\right)\right)\right|,\qquad1\le l\le N.\label{eq:diff_I}
\end{align}
For $1\le l\le N-1$, with the help of \eqref{eq:alpha_deformation_estimate},
the first term in \eqref{eq:diff_I} is estimated as follows
\begin{align*}
 & \left|I_{l}\left(\boldsymbol{\alpha}\left(t\right)\right)-I_{l}\left(\boldsymbol{\alpha}'\left(t\right)\right)\right|\\
\le & \frac{1}{2}\left|\alpha_{l}\left(t\right)+\alpha'_{l}\left(t\right)\right|\left|\bar{\alpha}_{l}\left(t\right)-\bar{\alpha}'_{l}\left(t\right)\right|+\frac{1}{2}\left|\alpha_{l}\left(t\right)-\alpha'_{l}\left(t\right)\right|\left|\bar{\alpha}_{l}\left(t\right)+\bar{\alpha}'_{l}\left(t\right)\right|\\
\le & C_{n}\varepsilon^{N-l}\varepsilon^{N-l+1}=C_{n}\varepsilon^{2N-2l+1}.
\end{align*}
The second term in the right hand side of \eqref{eq:diff_I} is estimated
in the same way. When $l=N$, the first and the second term of \eqref{eq:diff_I}
are controlled in a similar way and one has 
\begin{align*}
\left|I_{N}\left(\boldsymbol{\alpha}\left(t\right)\right)-I_{N}\left(\boldsymbol{\alpha}'\left(t\right)\right)\right| & \le C_{n}\varepsilon^{3},\\
\left|I_{N}\left(\boldsymbol{\alpha}\left(0\right)\right)-I_{N}\left(\boldsymbol{\alpha}'\left(0\right)\right)\right| & \le C_{n}\varepsilon^{3}.
\end{align*}
 For the third term, since $\boldsymbol{\alpha}'\left(t\right),\boldsymbol{\alpha}'\left(0\right)$
are all in $U_{2\varepsilon,\varepsilon}$, one has
\begin{align}
\left|I_{l}\left(\boldsymbol{\alpha}'\left(t\right)\right)-I_{l}\left(\boldsymbol{\alpha}'\left(0\right)\right)\right| & =\left|\int^{t}_{0}\left\{ H_{n},I_{l}\right\} _{\mathcal{S}}\left(\boldsymbol{\alpha}'\left(\tau\right)\right)d\tau\right|\nonumber \\
 & =\left|\int^{t}_{0}\left\{ R_{n},I_{I}\right\} _{\mathcal{S}}\left(\boldsymbol{\alpha}'\left(\tau\right)\right)d\tau\right|\nonumber \\
 & \le Ct\left|\left\{ R_{n},I_{l}\right\} _{\mathcal{S}}\right|.\label{eq:deformation_I}
\end{align}
Since $\left(\boldsymbol{\alpha}'\left(t\right),\bar{\boldsymbol{\alpha}}'\left(t\right)\right)\in U_{4\varepsilon,2\varepsilon}$,
so that $\left|\beta'_{l}\left(t\right)\right|\le C\varepsilon$ for
all $1\le l\le N$ and we can estimate \eqref{eq:deformation_I} for
each $l=1,2,\cdots,N$:
\[
\left|\left\{ R_{n},I_{l}\right\} _{\mathcal{S}}\left(\boldsymbol{\alpha}'\left(t\right)\right)\right|=\left|\sum_{\left|\boldsymbol{K}\right|+\left|\boldsymbol{L}\right|=n+1}C_{\boldsymbol{K},\boldsymbol{L}}\beta'^{\boldsymbol{K}}\bar{\beta}'^{\boldsymbol{L}}+h.o.t\right|\le C_{n}\varepsilon^{n+1}.
\]
Therefore, for $t\le\frac{1}{\varepsilon^{n-2\left(N-1\right)}}$,
\eqref{eq:diff_I} is controlled by 
\begin{align*}
\left|I_{l}\left(\boldsymbol{\alpha}\left(t\right)\right)-I_{l}\left(\boldsymbol{\alpha}\left(0\right)\right)\right| & \le C_{n}\varepsilon^{2N-2l+1}+C_{n}t\varepsilon^{n+1}\le C_{n}\varepsilon^{2N-2l+1},1\le l\le N-1.\\
\left|I_{N}\left(\boldsymbol{\alpha}\left(t\right)\right)-I_{N}\left(\boldsymbol{\alpha}\left(0\right)\right)\right| & \le C_{n}\varepsilon^{3}+C_{n}t\varepsilon^{n+1}\le C_{n}\varepsilon^{3}.
\end{align*}
We have proved that for $t\le\min\left\{ \frac{1}{\varepsilon^{n-4}},\tau\left(\boldsymbol{\alpha}_{0},2\varepsilon\right)\right\} $,
\begin{equation}
\begin{cases}
\left|I_{l}\left(\boldsymbol{\alpha}\left(t\right)\right)-I_{l}\left(\boldsymbol{\alpha}\left(0\right)\right)\right|\le C_{n}\varepsilon^{2N-2l+1}, & 1\le l\le N-1,\\
\left|I_{N}\left(\boldsymbol{\alpha}\left(t\right)\right)-I_{N}\left(\boldsymbol{\alpha}\left(0\right)\right)\right|\le C_{n}\varepsilon^{3}.
\end{cases}\label{eq:l_estimate}
\end{equation}
Next we prove that $\tau\left(\boldsymbol{\alpha}_{0},2\varepsilon\right)>\frac{1}{\varepsilon^{n-2\left(N-1\right)}}$.
Assume by contradiction that $\tau\left(\boldsymbol{\alpha}_{0},2\varepsilon\right)\le\frac{1}{\varepsilon^{n-2\left(N-1\right)}}$.
Then for $t=\tau\left(\boldsymbol{\alpha}_{0},2\varepsilon\right)$,
by definition of $U_{2\varepsilon,\varepsilon}$ we have
\begin{align}
\left|\boldsymbol{\alpha}\left(t\right)\right|^{2} & =\left(2\varepsilon\right)^{2\left(N-1\right)}+\left(2\varepsilon\right)^{2\left(N-2\right)}+\cdots+\left(2\varepsilon\right)^{2\left(N-\left(N-1\right)\right)}+\left(2\varepsilon\right)^{2}\nonumber \\
 & =8\varepsilon^{2}+16\varepsilon^{4}+64\varepsilon^{6}+\cdots4^{N-1}\varepsilon^{2\left(N-1\right)}.\label{eq:contridiction_1}
\end{align}
On the other hand, one has 
\begin{align}
\left|\boldsymbol{\alpha}\left(t\right)\right|^{2} & =\sum^{N}_{l=1}I_{l}\left(\boldsymbol{\alpha}\left(t\right)\right)\nonumber \\
 & \le\sum^{N}_{l=1}I_{l}\left(\boldsymbol{\alpha}\left(0\right)\right)+\left|I_{l}\left(\boldsymbol{\alpha}\left(t\right)\right)-I_{l}\left(\boldsymbol{\alpha}\left(0\right)\right)\right|\nonumber \\
 & \le2\varepsilon^{2}+\varepsilon^{4}+\varepsilon^{6}+\cdots+\varepsilon^{2\left(N-1\right)}+NC_{n}\varepsilon^{3},\label{eq:contridiction_2}
\end{align}
where the last inequality holds because of the assumption $\tau>\frac{1}{\varepsilon^{n-2\left(N-1\right)}}$
and \eqref{eq:l_estimate}. Combining \eqref{eq:contridiction_1} and \eqref{eq:contridiction_2}
yields that for small $\varepsilon$, 
\[
6\varepsilon^{2}+15\varepsilon^{4}+63\varepsilon^{6}+\cdots+\left(4^{N-1}-1\right)\varepsilon^{2\left(N-1\right)}\le NC_{n}\varepsilon^{3},
\]
which is absurd. 

\section{Applications}\label{sec:app}

\subsection{Hierarchical clustering configurations}

We can say that the configuration introduced by the main Theorem \ref{thm:main_result}
is one cluster with hierarchical structure. Actually our method can
apply to the two-based hierarchical clustering configuration (see
Fig \ref{fig:HierNested}). 
\begin{figure}
\begin{centering}
\includegraphics[scale=0.4]{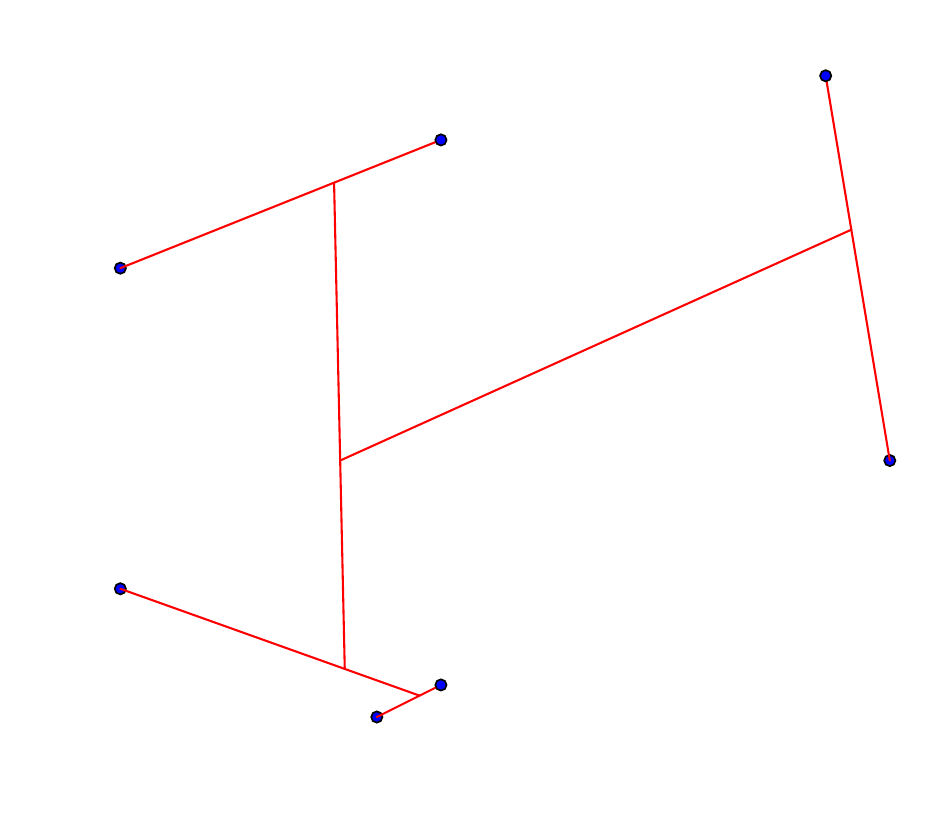}\caption{The hierarchical clustering configuration.\label{fig:HierNested}}
\par\end{centering}
\end{figure}
The choice of coordinates must capture the relative distances,
resolving both the macroscopic inter-cluster distances and the microscopic
intra-cluster scales, while ensuring the transformation remains canonical.
To illustrate this, let us consider the most simple '2-2' case on
the whole plane with its center of vorticity at the origin. One can
observe from Fig \ref{fig:Hier2-2} that $z_{1},z_{2}$ and $z_{3},z_{4}$ form two
clusters. \begin{figure}
\centering{}\includegraphics[scale=0.4]{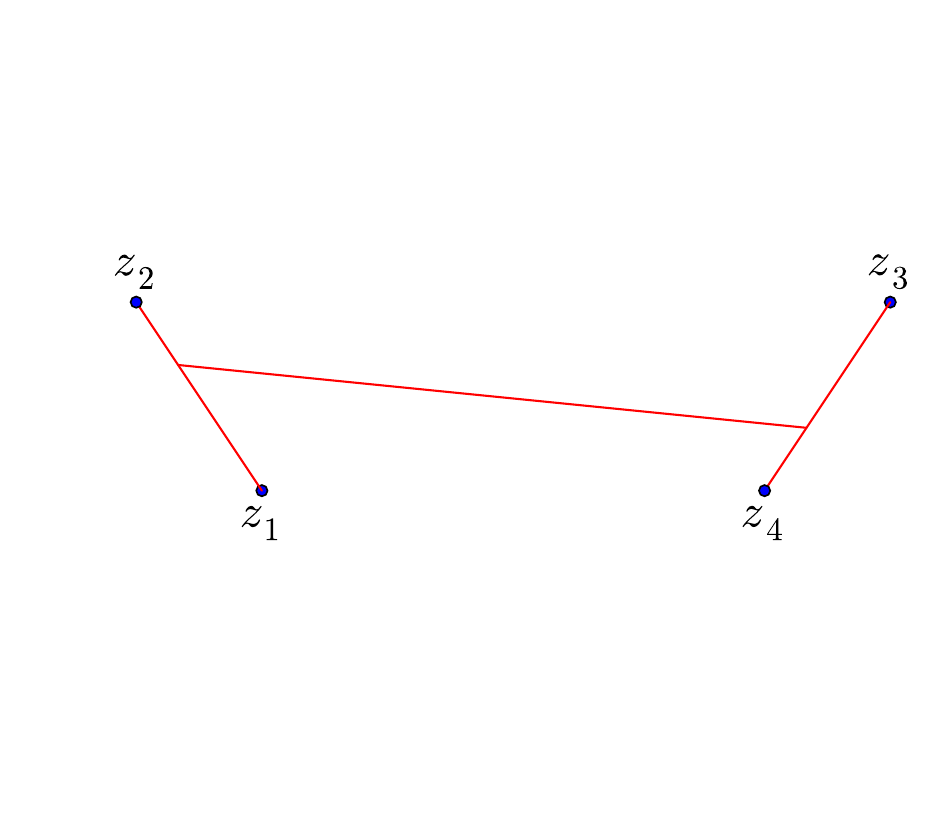}\caption{The '2-2' hierarchical clustering configuration. \label{fig:Hier2-2}}
\end{figure}
Let 
\begin{align*}
w_{1} & =z_{1}-z_{2},\\
w_{2} & =\frac{a_{1}z_{1}+a_{2}z_{2}}{a_{1}+a_{2}}-\frac{a_{3}z_{3}+a_{4}z_{4}}{a_{3}+a_{4}},\\
w_{3} & =z_{3}-z_{4},\\
w_{4} & =\frac{a_{1}z_{1}+a_{2}z_{2}+a_{3}z_{3}+a_{4}z_{4}}{a_{1}+a_{2}+a_{3}+a_{4}}\equiv0,
\end{align*}
where we assume that all the vortex strengths $a_{i}$ are chosen
properly so that all $w_{i}$ make sense. One can easily verify that
\[
\sum^{4}_{k=1}a_{k}dx_{k}\wedge dy_{k}=\frac{i}{2}\sum^{4}_{k=1}a_{k}dz_{k}\wedge d\bar{z}_{k}=\frac{i}{2}\sum^{4}_{k=1}\mu_{k}dw_{k}\wedge d\bar{w}_{k},
\]
where 
\begin{align*}
\mu_{1} & =\frac{a_{1}a_{2}}{a_{1}+a_{2}},\\
\mu_{2} & =\frac{\left(a_{1}+a_{2}\right)\left(a_{3}+a_{4}\right)}{a_{1}+a_{2}+a_{3}+a_{4}},\\
\mu_{3} & =\frac{a_{3}a_{4}}{a_{3}+a_{4}},\\
\mu_{4} & =a_{1}+a_{2}+a_{3}+a_{4}
\end{align*}
are called the reduced vorticity strengths. Hence the transform from
$\boldsymbol{z}$ to \textbf{$\boldsymbol{w}$ }is canonical and the
system's Hamiltonian in terms of coordinates $\boldsymbol{w}$ is
\begin{align*}
H\left(\boldsymbol{w}\right) & =-\sum_{1\le k<l\le4}\frac{a_{k}a_{l}}{4\pi}\ln\left(\left|z_{k}-z_{l}\right|^{2}\right)\\
 & =-\frac{a_{1}a_{2}}{4\pi}\ln\left(\left|w_{1}\right|^{2}\right)-\frac{a_{3}a_{4}}{4\pi}\ln\left(\left|w_{3}\right|^{2}\right)\\
 & \quad-\frac{a_{1}a_{3}}{4\pi}\ln\left(\left|w_{2}+\frac{a_{2}}{a_{1}+a_{2}}w_{1}-\frac{a_{4}}{a_{3}+a_{4}}w_{3}\right|^{2}\right)\\
 & \quad-\frac{a_{1}a_{4}}{4\pi}\ln\left(\left|w_{2}+\frac{a_{2}}{a_{1}+a_{2}}w_{1}+\frac{a_{3}}{a_{3}+a_{4}}w_{3}\right|^{2}\right)\\
 & \quad-\frac{a_{2}a_{3}}{4\pi}\ln\left(\left|w_{2}-\frac{a_{1}}{a_{1}+a_{2}}w_{1}-\frac{a_{4}}{a_{3}+a_{4}}w_{3}\right|^{2}\right)\\
 & \quad-\frac{a_{2}a_{4}}{4\pi}\ln\left(\left|w_{2}-\frac{a_{1}}{a_{1}+a_{2}}w_{1}+\frac{a_{3}}{a_{3}+a_{4}}w_{3}\right|^{2}\right).
\end{align*}
Since we assume that $\left|w_{2}\right|\gg\left|w_{1}\right|,\left|w_{3}\right|$,
one can expand the last four log terms and obtain the following expansion
\begin{align*}
H\left(\boldsymbol{w}\right) & =\underbrace{-\frac{a_{1}a_{2}}{4\pi}\ln\left(\left|w_{1}\right|^{2}\right)-\frac{a_{3}a_{4}}{4\pi}\ln\left(\left|w_{3}\right|^{2}\right)-\frac{\sum_{1\le k<l\le4}a_{k}a_{l}}{4\pi}\ln\left(\left|w_{2}\right|^{2}\right)}_{h_{0}}\\
 & \quad+\sum_{k+l\ge1}c_{k,l}\left(\frac{w_{1}}{w_{2}}\right)^{k}\left(\frac{w_{3}}{w_{2}}\right)^{l}+c_{k,l}\left(\frac{\bar{w}_{1}}{\bar{w}_{2}}\right)^{k}\left(\frac{\bar{w}_{3}}{\bar{w}_{2}}\right)^{l}.
\end{align*}
With the above form, we can follow a similar normal form procedure
in the main Theorem and prove a similar long time stability result
if initially $\left|\frac{w_{1}}{w_{2}}\right|=O\left(\varepsilon\right),\left|\frac{w_{3}}{w_{2}}\right|=O\left(\varepsilon\right)$.
We emphasize that there is no mandatory comparison between $\left|w_{1}\right|$
and $\left|w_{2}\right|$. 

\subsection{Multiple clusters configurations}

    We consider a configuration of three point vortices in which one vortex is replaced by a nearby pair of vortices. The pair acts as an effective single vortex, yielding a four-vortex system whose initial configuration is close to an equilateral triangle. (see Fig \ref{fig:tri1-1-2}).
\begin{figure}
\centering{}\includegraphics[scale=0.4]{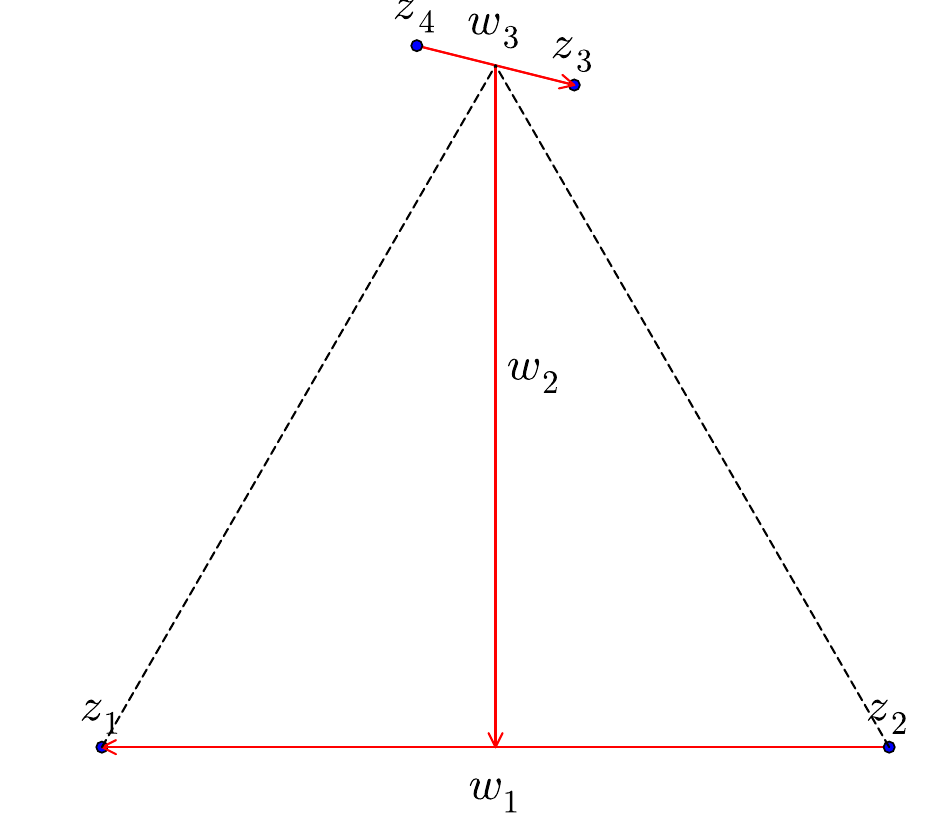}\caption{The '1+1-2' configuration near a stable relative equilibrium. \label{fig:tri1-1-2}}
\end{figure}
We call this configuration
as the '1+1-2' case. For simplicity we set all the vortex strengths
to be $1$. Let 
\begin{align*}
w_{1} & =z_{1}-z_{2},\\
w_{2} & =\frac{a_{1}z_{1}+a_{2}z_{2}}{a_{1}+a_{2}}-\frac{a_{3}z_{3}+a_{4}z_{4}}{a_{3}+a_{4}},\\
w_{3} & =z_{3}-z_{4},\\
w_{4} & =\frac{a_{1}z_{1}+a_{2}z_{2}+a_{3}z_{3}+a_{4}z_{4}}{a_{1}+a_{2}+a_{3}+a_{4}}\equiv0,
\end{align*}
where we set the center of vorticity at the origin. It can be verified
that the transform from $\boldsymbol{z}$ to $\boldsymbol{w}$ is
canonical. The Hamiltonian of this system is 
\begin{align*}
H & =-\frac{1}{4\pi}\left(\ln\left|w_{1}\right|^{2}+\ln\left|w_{3}\right|^{2}+\ln\left|w_{2}+\frac{w_{1}}{2}-\frac{w_{3}}{2}\right|^{2}+\ln\left|w_{2}-\frac{w_{1}}{2}+\frac{w_{3}}{2}\right|^{2}\right)\\
 & \quad-\frac{1}{4\pi}\left(\ln\left|w_{2}+\frac{w_{1}}{2}+\frac{w_{3}}{2}\right|^{2}+\ln\left|w_{2}-\frac{w_{1}}{2}-\frac{w_{3}}{2}\right|^{2}\right)\\
 & =-\frac{1}{4\pi}\left(\ln\left|w_{1}\right|^{2}+\ln\left|w_{3}\right|^{2}+2\ln\left|w_{2}+\frac{w_{1}}{2}\right|^{2}+2\ln\left|w_{2}-\frac{w_{1}}{2}\right|^{2}\right)\\
 & \quad-\frac{1}{4\pi}\left(\ln\left|1-\frac{w_{3}/2}{w_{2}+w_{1}/2}\right|^{2}+\ln\left|1+\frac{w_{3}/2}{w_{2}-w_{1}/2}\right|^{2}\right)\\
 & \quad-\frac{1}{4\pi}\left(\ln\left|1+\frac{w_{3}/2}{w_{2}+w_{1}/2}\right|^{2}+\ln\left|1-\frac{w_{3}/2}{w_{2}-w_{1}/2}\right|^{2}\right)\\
 & :=h_{w_{1},w_{2}}-\frac{1}{4\pi}\left(\ln\left|w_{3}\right|^{2}+\ln\left|1-\frac{w_{3}/2}{w_{2}+w_{1}/2}\right|^{2}+\ln\left|1+\frac{w_{3}/2}{w_{2}-w_{1}/2}\right|^{2}\right)\\
 & \qquad-\frac{1}{4\pi}\left(\ln\left|1+\frac{w_{3}/2}{w_{2}+w_{1}/2}\right|^{2}+\ln\left|1-\frac{w_{3}/2}{w_{2}-w_{1}/2}\right|^{2}\right).
\end{align*}
We remark there $h_{w_{1},w_{2}}$ is exactly the Hamiltonian of the
three point vortex system with vortex strengths $a_{1}=1,a_{2}=1,a_{3}=2$,
which precisely represents the case where the cluster is replaced
by an effective point vortex. Assuming that $z_{1},z_{2}$ and the
cluster $\left(z_{3},z_{4}\right)$ are located near a equilateral
triangle with the side length $L=1$. Let $\boldsymbol{u}$ be the
variable that filters out the rotation 
\[
\boldsymbol{w}=\boldsymbol{u}e^{i\Omega t},
\]
where $\Omega=\frac{\sum^{4}_{k=1}a_{k}}{2\pi L^{2}}=\frac{2}{\pi}$
is the frequency at which that equilateral triangle rotates. The Hamiltonian becomes 
\[
H_{rot}=\frac{\Omega}{2}\sum^{3}_{k=1}\mu_{k}\left|u_{k}\right|^{2}+H,
\]
where $\mu_{i}$ are the reduced vorticity strengths defined as 
\begin{align*}
\mu_{1} & =\frac{a_{1}a_{2}}{a_{1}+a_{2}}=\frac{1}{2}\\
\mu_{2} & =\frac{\left(a_{1}+a_{2}\right)\left(a_{3}+a_{4}\right)}{a_{1}+a_{2}+a_{3}+a_{4}}=1,\\
\mu_{3} & =\frac{a_{3}a_{4}}{a_{3}+a_{4}}=\frac{1}{2}.
\end{align*}
We expect the cluster and other two points to stay close to the equilibrium
, so one has $u_{1}\approx-L,u_{2}\approx-\lambda L$ with $\lambda=\frac{a_{1}-a_{2}}{2\left(a_{1}+a_{2}\right)}+i\frac{\sqrt{3}}{2}=i\frac{\sqrt{3}}{2}$.
More precisely, let 
\begin{align*}
u_{1} & =-L+\delta_{1},\\
u_{2} & =-\lambda L+\delta_{2},\\
u_{3} & =\delta_{3},
\end{align*}
and consider the system in terms of the perturbation $\left(\delta_{1},\delta_{2},\delta_{3}\right)$.
Assume that after the transformation $w_{1},w_{2}\to\delta_{1},\delta_{2}$,
$h_{w_{1},w_{2}}$ becomes $h_{\delta_{1},\delta_{2}}$. The new Hamiltonian
is 
{\small\begin{align*}
H_{\delta_{1},\delta_{2},\delta_{3}} & =\frac{\Omega}{2}\left(\mu_{1}\left|\delta_{1}-L\right|^{2}+\mu_{2}\left|\delta_{2}-\lambda L\right|^{2}\right)+h_{\delta_{1},\delta_{2}}-\frac{1}{4\pi}\ln\left|\delta_{3}\right|^{2}\\
 & \quad-\frac{1}{4\pi}\left(\ln\left|1-\frac{\delta_{3}/2}{-\lambda L+\delta_{2}+\frac{-L+\delta_{1}}{2}}\right|^{2}+\ln\left|1+\frac{\delta_{3}/2}{-\lambda L+\delta_{2}-\frac{-L+\delta_{1}}{2}}\right|^{2}\right)\\
 & \quad-\frac{1}{4\pi}\left(\ln\left|1+\frac{\delta_{3}/2}{-\lambda L+\delta_{2}+\frac{-L+\delta_{1}}{2}}\right|^{2}+\ln\left|1-\frac{\delta_{3}/2}{-\lambda L+\delta_{2}-\frac{-L+\delta_{1}}{2}}\right|^{2}\right).
\end{align*}}
Due to the fact that $\delta_{1},\delta_{2},\delta_{3}\ll L$, we expand
the last four $\log$ terms and obtain that 
\begin{align}
H_{\delta_{1},\delta_{2},\delta_{3}} & =\frac{\Omega}{2}\left(\mu_{1}\left|\delta_{1}-L\right|^{2}+\mu_{2}\left|\delta_{2}-\lambda L\right|^{2}\right)+h_{\delta_{1},\delta_{2}}\nonumber \\
 & \quad-\frac{1}{4\pi}\ln\left|\delta_{3}\right|^{2}-\frac{\delta^{2}_{3}}{16\pi}-\frac{\bar{\delta}^{2}_{3}}{16\pi}+O\left(\delta^{3}_{1}+\delta^{3}_{2}+\delta^{3}_{3}\right)\nonumber \\
 & :=H_{\delta_{1},\delta_{2}}-\frac{1}{4\pi}\ln\left|\delta_{3}\right|^{2}-\frac{\delta^{2}_{3}}{16\pi}-\frac{\bar{\delta}^{2}_{3}}{16\pi}+O\left(\delta^{3}_{1}+\delta^{3}_{2}+\delta^{3}_{3}\right)\label{eq:H_delta_4}
\end{align}
Now observe that $H_{\delta_{1},\delta_{2}}$ can be thought of as the Hamiltonian of the three point vortex system with strengths $a_{1}=1,a_{2}=1,a_{3}=2$ in a new Jacobi coordinate system.
Because the three point vortex system is completely integrable, there
exists a symplectic mapping $\left(\delta_{1},\delta_{2}\right)=\varphi\left(\sigma_{1},\sigma_{2}\right)$
such that the Hamiltonian in $\left(\sigma_{1},\sigma_{2}\right)$
coordinate becomes the following: 
\[
H_{\sigma_{1},\sigma_{2}}:=H_{\delta_{1},\delta_{2}}\circ\varphi=C+c_{1}\left|\sigma_{1}\right|^{2}+c_{2}\left|\sigma_{2}\right|^{2}+r\left(\left|\sigma_{1}\right|,\left|\sigma_{2}\right|\right)
\]
which is a function of $\left|\sigma_{1}\right|,\left|\sigma_{2}\right|$
only and the remainder $r$ is the higher order part. Applying this
symplectic mapping $\varphi$ to the clusters case, let 
\[
\left(\delta_{1},\delta_{2}\right)=\varphi\left(\sigma_{1},\sigma_{2}\right),\delta_{3}=\sigma_{3}
\]
 and the Hamiltonian \eqref{eq:H_delta_4} now becomes 
\begin{align*}
H_{\sigma_{1},\sigma_{2},\sigma_{3}} & =\underbrace{C+c_{1}\left|\sigma_{1}\right|^{2}+c_{2}\left|\sigma_{2}\right|^{2}-\frac{1}{4\pi}\ln\left|\sigma_{3}\right|^{2}}_{h_{0}}\\
 & \quad-\frac{\sigma^{2}_{3}}{16\pi}-\frac{\bar{\sigma}^{2}_{3}}{16\pi}+O\left(\sigma^{3}_{1}+\sigma^{3}_{2}+\sigma^{3}_{3}\right).
\end{align*}
With the above form we can perform a normal form and prove the long
time stability. 

In the general case, as long as all the clusters as effective point
vortices are near a stable configuration, there exists a local symplectic
mapping such that, in the new coordinates, the Hamiltonian takes a
form suitable for normal form reduction.
\section*{Acknowledgments}
S.I. acknowledges financial support from the Natural Sciences and Engineering Research Council of Canada (NSERC) under Grant No. 371637-2025. S.S. was supported by the National Natural Science Foundation of China (Grant No. 12671275) and the Ningbo Natural Science Foundation (Project No. 2025J005).

Part of this work was presented and discussed at the Fields Institute for Research in Mathematical Sciences during the Thematic Program on Shocks and Singularities: Nonlinear Evolution Equations in Physical and Life Sciences. We would like to thank the organizers and the staff of the Fields Institute for their hospitality and for providing a stimulating environment for collaboration.

\appendix
\section*{Appendix}
\section{Algebraic structure of the series expansion}
The positions of $N$ point vortices can be recovered explicitly through the Jacobi coordinates
\[
z_{k}=\left(\kappa_{k}-1\right)w_{k-1}+\sum^{N}_{j=k}\kappa_{j+1}w_{j},
\]
where the sum index $j$ is understood as an element in $\mathbb{Z}/N\mathbb{Z}$ and $\kappa_k = \frac{a_k}{A_k}$ with $A_k = \sum_{i=1}^k a_i$. This linear relation leads to the following special algebraic structure for the series expansion.
\begin{prop}
\label{prop:base_structure}Given an integer $p\ge1$, expand $Q_{p}=\sum^{N}_{k=1}a_{k}z^{p}_{k}$
as a power series of $\boldsymbol{w}$
\[
Q_{p}=\sum c_{\boldsymbol{K}}\boldsymbol{w}^{\boldsymbol{K}}.
\]
If $c_{\boldsymbol{K}}\neq0$ and $K_{l}$ with $1\le l\le N-1$ is
the first non-zero component of $\boldsymbol{K}$, then $K_{l}\neq1$. 
\end{prop}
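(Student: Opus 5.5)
The plan is to isolate exactly the monomials that the statement forbids and show that their total contribution vanishes identically, by a restriction-and-differentiation argument. Since $Q_p$ is a homogeneous polynomial of degree $p$ in $\boldsymbol{w}$ (each $z_k$ is linear in $\boldsymbol{w}$), there are no convergence issues. Fix $1\le l\le N-1$. The monomials $c_{\boldsymbol{K}}\boldsymbol{w}^{\boldsymbol{K}}$ with $K_1=\cdots=K_{l-1}=0$ and $K_l=1$ are precisely the terms of the polynomial
\[
P_l(w_{l+1},\dots,w_N):=\partial_{w_l}Q_p\big|_{w_1=\cdots=w_{l-1}=0,\;w_l=0}.
\]
Hence it suffices to prove $P_l\equiv 0$ as a polynomial in $(w_{l+1},\dots,w_N)$.

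To compute $P_l$, use the reconstruction formula $z_k=(\kappa_k-1)w_{k-1}+\sum_{j=k}^{N}\kappa_{j+1}w_j$, with the convention that the coefficient of $w_N$ is $1$. Set $w_1=\cdots=w_{l-1}=0$ and write $S:=\sum_{j=l+1}^{N}\kappa_{j+1}w_j$. The formula then gives three cases.
\begin{itemize}
\item For every $k\le l$, the term $w_{k-1}$ vanishes, since either $k=1$ (where $\kappa_1-1=0$) or $k-1<l$. Thus $z_k=\kappa_{l+1}w_l+S$, so all of $z_1,\dots,z_l$ coincide.
\item For $k=l+1$, we get $z_{l+1}=(\kappa_{l+1}-1)w_l+S$.
\item For $k\ge l+2$, $z_k$ does not depend on $w_l$.
\end{itemize}
Differentiating $Q_p=\sum_k a_kz_k^p$ in $w_l$ and setting $w_l=0$ therefore gives
\[
P_l=p\,S^{p-1}\Bigl(A_l\kappa_{l+1}+a_{l+1}(\kappa_{l+1}-1)\Bigr).
\]

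The final step is the algebraic identity. Substituting $\kappa_{l+1}=a_{l+1}/A_{l+1}$ gives
\[
A_l\kappa_{l+1}+a_{l+1}(\kappa_{l+1}-1)=\frac{a_{l+1}}{A_{l+1}}\bigl(A_l+a_{l+1}-A_{l+1}\bigr)=0.
\]
This is just the statement that $w_l$ enters the positions of the subcluster $\{1,\dots,l+1\}$ without shifting its center of vorticity. Consequently $P_l\equiv0$, so every monomial whose first nonzero exponent is $K_l=1$ with $l\le N-1$ has vanishing coefficient, which is the claim.

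The only delicate point I expect is the bookkeeping of the cyclic index convention $j\in\mathbb{Z}/N\mathbb{Z}$ and the boundary case $k=1$ (where $\kappa_1=1$ kills the $w_0=w_N$ term) in the reconstruction formula. This ensures that $z_1,\dots,z_l$ really share the same $w_l$-coefficient $\kappa_{l+1}$. Once that is checked, the cancellation is immediate.
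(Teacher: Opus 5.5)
Your proof is correct and follows essentially the paper's argument. The paper also uses that $w_l$ enters only $z_1,\dots,z_{l+1}$, with coefficients $\kappa_{l+1}$ and $\kappa_{l+1}-1$, and kills the $K_l=1$ coefficients via the identity $A_l\kappa_{l+1}+a_{l+1}(\kappa_{l+1}-1)=0$. The paper writes out the full multinomial coefficient $c_{\boldsymbol{K}}$, with factor $A_l\kappa_{l+1}^{K_l}+a_{l+1}(\kappa_{l+1}-1)^{K_l}$, and then specializes to $K_l=1$; you extract the $K_l=1$ part directly by differentiating in $w_l$ and restricting, which is a slightly cleaner packaging of the same computation.
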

\begin{proof}
According to the linear transform between $\boldsymbol{z}$ and $\boldsymbol{w}$,
\textbf{$w_{l}$} only appears in $z_{1},z_{2},\cdots,z_{l+1}$. So
such $\boldsymbol{w}^{\boldsymbol{K}}$ can only be found in 
\[
a_{1}z^{p}_{1}+a_{2}z^{p}_{2}+\cdots+a_{l+1}z^{p}_{l+1}.
\]
A straightforward calculation shows the coefficients of $\boldsymbol{w}^{\boldsymbol{K}}$in
$a_{j}z^{p}_{j},1\le j\le l+1$ are
\begin{align*}
\frac{a_{j}p!}{K_{l}!K_{l+1}!\cdots K_{N}!}\kappa^{K_{l}}_{l+1}\kappa^{K_{l+1}}_{l+2}\cdots\kappa^{K_{N-1}}_{N-1}, & 1\le j\le l\\
\frac{a_{l+1}p!}{K_{l}!K_{l+1}!\cdots K_{N}!}\left(\kappa_{l+1}-1\right)^{K_{l}}\kappa^{K_{l+1}}_{l+2}\cdots\kappa^{K_{N-1}}_{N-1}, & j=l+1.
\end{align*}
Thus one obtains 
\[
c_{\boldsymbol{K}}=\frac{p!}{K_{l}!K_{l+1}!\cdots K_{N}!}\kappa^{K_{l+1}}_{l+2}\cdots\kappa^{K_{N-1}}_{N-1}\left(A_{l}\kappa^{K_{l}}_{l+1}+a_{l+1}\left(\kappa_{l+1}-1\right)^{K_{l}}\right).
\]
If $K_{l}=1$, it leads to 
\[
A_{l}\kappa^{K_{l}}_{l+1}+a_{l+1}\left(\kappa_{l+1}-1\right)^{K_{l}}=A_{l+1}\kappa_{l+1}-a_{l+1}=0,
\]
which is a contradiction. 
\end{proof}
A similar result holds for the function composition.
\begin{prop}\label{prop:mix_structure}
Given $\varphi\left(z\right)=\sum_{k\ge1}c_{k}z^{k}$ and an integer
$p\ge1$, expand $F=\sum^{N}_{k=1}a_{k}\varphi\left(z_{k}\right)^{p}$
as a power series of $\boldsymbol{w}$
\[
F=\sum c_{\boldsymbol{K}}\boldsymbol{w}^{\boldsymbol{K}}
\]
If $c_{\boldsymbol{K}}\neq0$ and $K_{l}$ with $1\le l\le N-1$ is
the first non-zero component of $\boldsymbol{K}$, then $K_{l}\neq1$. 
\end{prop}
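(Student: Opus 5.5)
Since $\varphi(z)^p=\sum_{q\ge p}d_q z^q$ with $d_q$ the coefficients of the power series $\varphi^p$ (well defined because $\varphi(0)=0$), we can write
\[
F=\sum_{k=1}^N a_k\varphi(z_k)^p=\sum_{q\ge p} d_q\sum_{k=1}^N a_k z_k^q=\sum_{q\ge p}d_q Q_q ,
\]
where $Q_q$ is exactly the quantity treated in Proposition \ref{prop:base_structure}. The plan is therefore to reduce the claim to Proposition \ref{prop:base_structure}, applied term by term in $q$. First justify the rearrangement: for $\boldsymbol{w}$ small, every $z_k$ is a linear combination of the $w_j$ and hence small, so all the series converge absolutely. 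The coefficient of a fixed monomial $\boldsymbol{w}^{\boldsymbol{K}}$ in $F$ is then $\sum_q d_q c^{(q)}_{\boldsymbol{K}}$, where $c^{(q)}_{\boldsymbol{K}}$ is its coefficient in $Q_q$. Each $Q_q$ is a homogeneous polynomial of degree $q$ in $\boldsymbol{w}$, so only the single term $q=|\boldsymbol{K}|$ contributes. This gives
\[
c_{\boldsymbol{K}}=d_{|\boldsymbol{K}|}\,c^{(|\boldsymbol{K}|)}_{\boldsymbol{K}}.
\]

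Now suppose $c_{\boldsymbol{K}}\neq0$ and the first nonzero component of $\boldsymbol{K}$ is $K_l$ with $1\le l\le N-1$. Then $c^{(|\boldsymbol{K}|)}_{\boldsymbol{K}}\neq0$, and Proposition \ref{prop:base_structure} with exponent $|\boldsymbol{K}|$ gives $K_l\neq1$. The key observation that makes this work is the homogeneity of $Q_q$: no cancellation between different $q$ can occur, so the nonvanishing of $c_{\boldsymbol{K}}$ transfers directly to a single $Q_q$.

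I do not expect a serious obstacle. The only point needing care is the convergence and domain issue: the positions $z_k$ must lie in the disc of convergence of $\varphi$, which holds once $|\boldsymbol{w}|$ is small enough. One could also allow a constant term $c_0$, but here $\varphi(0)=0$, so none arises. As a sanity check I would also re-verify the coefficient formula in Proposition \ref{prop:base_structure}. It relies on $w_l$ entering $z_1,\dots,z_l$ with coefficient $\kappa_{l+1}$ and $z_{l+1}$ with coefficient $\kappa_{l+1}-1$, and on the identity $A_l\kappa_{l+1}+a_{l+1}(\kappa_{l+1}-1)=A_{l+1}\kappa_{l+1}-a_{l+1}=0$, which is the conservation of the weighted center. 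The proof would use that proposition exactly as stated.
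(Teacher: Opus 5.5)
Your proof is correct and follows the same argument as the paper: expand $\varphi(z)^p=\sum_{q\ge p}d_q z^q$, write $F=\sum_{q\ge p}d_q Q_q$, and apply Proposition~\ref{prop:base_structure} to each $Q_q$. Your formulation is cleaner than the paper's displayed intermediate line, which writes $\sum_{l}a_k(c_l z_k^l)^p$ where $a_k\bigl(\sum_l c_l z_k^l\bigr)^p$ is meant; and the homogeneity of $Q_q$ is convenient but not needed, because linearity alone gives the result (if $K_l=1$, every $Q_q$ has zero coefficient on $\boldsymbol{w}^{\boldsymbol{K}}$, hence so does $F$).
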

\begin{proof}
    Plugging the expansion of $\varphi$ gives 
\begin{align*}
F & =\sum^{N}_{k=1}\sum_{l\ge1}a_{k}\left(c_{l}z^{l}_{k}\right)^{p}\\
 & =\sum_{l\ge p}c_{l,p}\left(\sum^{N}_{k=1}a_{k}z^{l}_{k}\right).
\end{align*}
Applying Proposition \ref{prop:base_structure} to $\sum^{N}_{k=1}a_{k}z^{l}_{k}$ immediately
proves the result.
\end{proof}

\section{The multi-index inequality}
For an $N$-dimensional multi-index $\boldsymbol{K}$, define 
\[
\widehat{\boldsymbol{K}}=\left(0,K_{1},K_{2},\cdots,K_{N-2},0\right),
\]
and let $i_{\boldsymbol{K},m}$ be the index of the first non-zero
entry of 
\[
\Delta_{m}\boldsymbol{K}:=\begin{cases}
\boldsymbol{K}-\widehat{\boldsymbol{K}}+\boldsymbol{e}_{m}, & 1\le m\le N-1,\\
\boldsymbol{K}-\widehat{\boldsymbol{K}}, & m=N.
\end{cases}
\]
If all entries are $0$, set $i_{\boldsymbol{K},m}=\infty$. We have
the following Proposition which will be used in the deformation estimates
after applying a canonical transformation.
\begin{prop}
\label{prop:index_inequality}Let $1\le m\le N$ and assume that $\left(\boldsymbol{K},\boldsymbol{L}\right)\in\mathscr{R}_{m-1}\setminus\mathscr{R}_{m}$
such that both the supports of $\boldsymbol{K}\left[1:m\right],\boldsymbol{L}\left[1:m\right]$ are
connected. Then it holds that
\[
\left|\boldsymbol{K}\right|+\left|\boldsymbol{L}\right|-2m+2i_{\boldsymbol{K},m}\ge1.
\]
\end{prop}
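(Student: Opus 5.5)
The plan is to unwind the definition of $i_{\boldsymbol{K},m}$ in terms of the functions $\delta_j$, and then split into cases according to whether $i_{\boldsymbol{K},m}\ge m$ or $i_{\boldsymbol{K},m}<m$.

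\emph{Reformulating $\Delta_m\boldsymbol{K}$.} Write out $\boldsymbol{K}-\widehat{\boldsymbol{K}}$ componentwise. Its entries are $(K_1,K_2-K_1,\dots,K_{N-1}-K_{N-2},K_N)$, that is, exactly $(\delta_1(\boldsymbol{K}),\dots,\delta_N(\boldsymbol{K}))$. Hence $(\Delta_m\boldsymbol{K})_j=\delta_j(\boldsymbol{K})+[j=m]$ when $m\le N-1$, and $(\Delta_N\boldsymbol{K})_j=\delta_j(\boldsymbol{K})$. Set $i=i_{\boldsymbol{K},m}$.

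\emph{Easy case $i\ge m$.} This includes $i=\infty$. The right-hand side then satisfies $2m-2i+1\le 1$. Since $(\boldsymbol{K},\boldsymbol{L})\notin\mathscr{R}_m$, we have $K_m\neq L_m$, so $|\boldsymbol{K}|+|\boldsymbol{L}|\ge 1$. This settles the case.

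\emph{Main case $i<m$.} For $j<i\le m-1$ there is no $\boldsymbol{e}_m$ correction, so $\delta_1(\boldsymbol{K})=\dots=\delta_{i-1}(\boldsymbol{K})=0$ and $\delta_i(\boldsymbol{K})\neq 0$. Telescoping gives $K_1=\dots=K_{i-1}=0$ and $K_i=\delta_i(\boldsymbol{K})\neq0$, hence $K_i\ge1$.

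Because $(\boldsymbol{K},\boldsymbol{L})\in\mathscr{R}_{m-1}$, we have $L_j=K_j$ for $j\le m-1$; in particular $L_i=K_i\ge 1$. Since $K_m\neq L_m$, at least one of $K_m$, $L_m$ is nonzero; say $L_m\ge1$, the other case being symmetric. The support of $\boldsymbol{L}[1:m]$ then contains both $i$ and $m$. By the connectedness hypothesis it contains the whole range $i,\dots,m$, so $L_j\ge1$ for $i\le j\le m$. Consequently $K_j=L_j\ge1$ for $i\le j\le m-1$.

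Counting gives
\[
|\boldsymbol{K}|+|\boldsymbol{L}|\ \ge\ \sum_{j=i}^{m-1}(K_j+L_j)+L_m\ \ge\ 2(m-i)+1,
\]
which is the claimed inequality.

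The only delicate point is the middle step. One must check that the first nonzero entry of $\Delta_m\boldsymbol{K}$ at an index $i<m$ really forces $K_i\neq0$ with zeros before it; the $\boldsymbol{e}_m$ shift could only interfere at index $m$, and this case is excluded by $i<m$. One must also note that connectedness is needed for whichever of $\boldsymbol{K}[1:m]$ or $\boldsymbol{L}[1:m]$ carries the nonzero $m$-th entry. Both hypotheses on the supports are therefore used, depending on which of $K_m$, $L_m$ vanishes.
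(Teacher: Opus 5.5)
Your proof is correct and follows essentially the same route as the paper, which first shows $i_{\boldsymbol{K},m}=\min(i_{\boldsymbol{K}},m)$ and then treats two cases. When $i_{\boldsymbol{K},m}=m$ it uses the trivial bound $|\boldsymbol{K}|+|\boldsymbol{L}|\ge 1$ coming from $K_m\neq L_m$; when $i_{\boldsymbol{K},m}<m$ it uses connectedness of whichever of $\boldsymbol{K}[1:m]$, $\boldsymbol{L}[1:m]$ has a nonzero $m$-th entry, together with $K_j=L_j$ for $j\le m-1$, to get $2(m-i)+1$. Your split on $i\ge m$ versus $i<m$ skips the explicit identification of $i_{\boldsymbol{K},m}$, and so it also handles the degenerate case $i_{\boldsymbol{K},m}=\infty$ (e.g.\ $m=N$, $\boldsymbol{K}=\boldsymbol{0}$) slightly more cleanly than the paper does.
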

\begin{proof}
For $1\le m\le N$, the $j$-th component of $\Delta_{m}\boldsymbol{K}$
is 
\[
\left(\Delta_{m}\boldsymbol{K}\right)_{j}=\begin{cases}
K_{j}-K_{j-1}+\delta_{jm}, & 1\le j\le N-1,\\
K_{N}, & j=N,
\end{cases}
\]
with the convention that $K_{0}=L_{0}=0$. Assuming that the index
of the first non-zero entry of $\boldsymbol{K}$ is $i_{\boldsymbol{K}}$.
For those $j<\min\left(i_{\boldsymbol{K}},m\right)$, it is clear
that $j\le N-1,K_{j}=K_{j-1}=0$ so that 
\[
\left(\Delta_{m}\boldsymbol{K}\right)_{j}=K_{j}-K_{j-1}+\delta_{jm}=0.
\]
If $j=\min\left(i_{\boldsymbol{K}},m\right)$ one has either $K_{j}=K_{i_{\boldsymbol{K}}}\ge1$
or $\delta_{jm}=\delta_{mm}=1$, and in both cases $K_{j-1}=0$. So
that if $j\le N-1$, it holds that
\[
\left(\Delta_{m}\boldsymbol{K}\right)_{j}=K_{j}+\delta_{jm}\ge1,
\]
while if $j=N$, one has 
\[
\left(\Delta_{m}\boldsymbol{K}\right)_{N}=K_{N}=K_{j}\ge1.
\]
Thus $i_{\boldsymbol{K},m}=\min\left(i_{\boldsymbol{K}},m\right)$
and the proof goes into the following two cases.

Case 1, $i_{\boldsymbol{K},m}=m\le i_{\boldsymbol{K}}$. In this case
$K_{j}=0,j=1,2,\cdots,m-1$. Since $\left(\boldsymbol{K},\boldsymbol{L}\right)\in\mathscr{R}_{m-1}\setminus\mathscr{R}_{m}$,
by definition one has $L_{j}=K_{j}=0,j=1,2,\cdots,m-1$ and $K_{m}\neq L_{m}$.
Then at least one of $K_{m},L_{m}$ is strictly positive so that $\left|\boldsymbol{K}\right|+\left|\boldsymbol{L}\right|\ge1$.
Plugging it into the target inequality yields
\[
\left|\boldsymbol{K}\right|+\left|\boldsymbol{L}\right|-2m+2i_{\boldsymbol{K},m}\ge1+2\left(i_{\boldsymbol{K}}-m\right)=1.
\]

Case 2, $i_{\boldsymbol{K},m}=i_{\boldsymbol{K}}<m$. By definition
one has $K_{i_{\boldsymbol{K}}}=L_{i_{\boldsymbol{K}}}\ge1$ and $i_{\boldsymbol{K}}$
is also the index of the first non-zero entry of $\boldsymbol{L}$.
Because $K_{m}\neq L_{m}$, at least one of $K_{m},L_{m}$ is strictly
positive. If $K_{m}\ge1$, since the support $\boldsymbol{K}\left[1:m\right]$ is
connect, one obtains $K_{j}\ge1$ for $j=i_{\boldsymbol{K}},i_{\boldsymbol{K}}+1,\cdots,m$
so that $\left|\boldsymbol{K}\right|\ge m-i_{\boldsymbol{K}}+1$.
Meanwhile $L_{j}=K_{j}\ge1$ for $j=i_{\boldsymbol{K}},i_{\boldsymbol{K}}+1,\cdots,m-1$,
one also has $\left|\boldsymbol{L}\right|\ge m-i_{\boldsymbol{K}}$.
Plugging them into the target inequality we end up with 
\[
\left|\boldsymbol{K}\right|+\left|\boldsymbol{L}\right|-2m+2i_{\boldsymbol{K},m}\ge2m-2i_{\boldsymbol{K}}+1-2m+2i_{\boldsymbol{K}}=1.
\]
If $L_{m}\ge1$, similar argument applies to $\boldsymbol{L}$ leads
to the same result. 
\end{proof}
\section{Proof of Lemma \ref{claim:oder_increase}}\label{sec:proof_oder_increase}
Assuming that $\left(\boldsymbol{K},\boldsymbol{L}\right)\in\mathscr{C},f=\boldsymbol{\beta}^{\boldsymbol{K}'}\bar{\boldsymbol{\beta}}^{\boldsymbol{L'}}$
and $\left|\boldsymbol{K}\right|+\left|\boldsymbol{L}\right|=k\ge1,\left|\boldsymbol{K}'\right|+\left|\boldsymbol{L}'\right|=k'$,
we consider the lowest order of $\left\{ \boldsymbol{\beta}^{\boldsymbol{K}+\boldsymbol{E}_{m}}\bar{\boldsymbol{\beta}}^{\boldsymbol{L }+\boldsymbol{E}_{m}},f\right\} $
for each $\left(\boldsymbol{K},\boldsymbol{L}\right)\in\mathscr{R}_{m-1}\setminus\mathscr{R}_{m},1\le m\le N$
. By \eqref{eq:poly_action}, for $1\le m\le N-1$, it holds that
{\small
\begin{align*}
 & \left\{ \boldsymbol{\beta}^{\boldsymbol{K}+\boldsymbol{E}_{m}}\bar{\boldsymbol{\beta}}^{\boldsymbol{L}+\boldsymbol{E}_{m}},\boldsymbol{\beta}^{\boldsymbol{K}'}\bar{\boldsymbol{\beta}}^{\boldsymbol{L'}}\right\} _{\mathcal{S}}\\
= & \sum^{N-1}_{l=1}c_{l}\left(\delta_{l}\left(\boldsymbol{K}+\boldsymbol{E}_{m}\right)\delta_{l}\left(\boldsymbol{L}'\right)-\delta_{l}\left(\boldsymbol{L}+\boldsymbol{E}_{m}\right)\delta_{l}\left(\boldsymbol{K}'\right)\right)\boldsymbol{\beta}^{\boldsymbol{K}+\boldsymbol{K}'+\boldsymbol{E}_{m}-\boldsymbol{E}_{l}}\bar{\boldsymbol{\beta}}^{\boldsymbol{L}+\boldsymbol{L}'+\boldsymbol{E}_{m}-\boldsymbol{E}_{l}}\\
 & +c_{N}\left(\delta_{N}\left(\boldsymbol{K}\right)\delta_{N}\left(\boldsymbol{L}'\right)-\delta_{N}\left(\boldsymbol{L}\right)\delta_{N}\left(\boldsymbol{K}'\right)\right)\boldsymbol{\beta}^{\boldsymbol{K}+\boldsymbol{K}'+\boldsymbol{E}_{m}-\boldsymbol{e}_{N}}\bar{\boldsymbol{\beta}}^{\boldsymbol{L}+\boldsymbol{L}'+\boldsymbol{E}_{m}-\boldsymbol{e}_{N}},
\end{align*}
}and for $m=N$, 
\begin{align*}
 & \left\{ \boldsymbol{\beta}^{\boldsymbol{K}}\bar{\boldsymbol{\beta}}^{\boldsymbol{L}},\boldsymbol{\beta}^{\boldsymbol{K}'}\bar{\boldsymbol{\beta}}^{\boldsymbol{L'}}\right\} _{\mathcal{S}}\\
= & \sum^{N-1}_{l=1}c_{l}\left(\delta_{l}\left(\boldsymbol{K}\right)\delta_{l}\left(\boldsymbol{L}'\right)-\delta_{l}\left(\boldsymbol{L}\right)\delta_{l}\left(\boldsymbol{K}'\right)\right)\boldsymbol{\beta}^{\boldsymbol{K}+\boldsymbol{K}'-\boldsymbol{E}_{l}}\bar{\boldsymbol{\beta}}^{\boldsymbol{L}+\boldsymbol{L}'-\boldsymbol{E}_{l}}\\
 & +c_{N}\left(\delta_{N}\left(\boldsymbol{K}\right)\delta_{N}\left(\boldsymbol{L}'\right)-\delta_{N}\left(\boldsymbol{L}\right)\delta_{N}\left(\boldsymbol{K}'\right)\right)\boldsymbol{\beta}^{\boldsymbol{K}+\boldsymbol{K}'-\boldsymbol{e}_{N}}\bar{\boldsymbol{\beta}}^{\boldsymbol{L}+\boldsymbol{L}'-\boldsymbol{e}_{N}}.
\end{align*}
We count the degree for every term in the summation above.
\begin{itemize}
\item $1\le m\le N-1,m\le l\le N-1$. Then $\left|\boldsymbol{E}_{m}-\boldsymbol{E}_{l}\right|=l-m\ge0$
and immediately we have 
\[
\left|\boldsymbol{K}+\boldsymbol{K}'+\boldsymbol{E}_{m}-\boldsymbol{E}_{l}\right|+\text{\ensuremath{\left|\boldsymbol{L}+\boldsymbol{L}'+\boldsymbol{E}_{m}-\boldsymbol{E}_{l}\right|}}\ge k'+1.
\]
\item $1\le m\le N-1,l=N$. Then $\left|\boldsymbol{E}_{m}-\boldsymbol{e}_{N}\right|=N-m-1\ge0$
and it follows 
\[
\left|\boldsymbol{K}+\boldsymbol{K}'+\boldsymbol{E}_{m}-\boldsymbol{E}_{l}\right|+\text{\ensuremath{\left|\boldsymbol{L}+\boldsymbol{L}'+\boldsymbol{E}_{m}-\boldsymbol{E}_{l}\right|}}\ge k'+1.
\]
\item $1\le m\le N-1,l<m$. Then $\left|\boldsymbol{E}_{m}-\boldsymbol{E}_{l}\right|=l-m<0$.
Let $i_{\boldsymbol{K}}$ be the index such that $K_{i_{\boldsymbol{K}}-1}=0,K_{i_{\boldsymbol{K}}}\neq0$.
It is clear that $i_{\boldsymbol{K}}$ exists and is unique if $\boldsymbol{K}\neq\boldsymbol{0}$. Meanwhile, since $\left(\boldsymbol{K},\boldsymbol{L}\right)\in\mathscr{R}_{m-1}\setminus\mathscr{R}_{m}$,
we have 
\[
K_{l}=L_{l},K_{l+1}=L_{l+1},\cdots,K_{m-1}=L_{m-1},K_{m}\neq L_{m}.
\]
Assuming that $K_{m}\neq0$, then $i_{\boldsymbol{K}}\le m$. Comparing
the value of $i_{\boldsymbol{K}}$ and $l$ yields
\begin{itemize}
\item $i_{\boldsymbol{K}}\le l$. Because $\left(\boldsymbol{K},\boldsymbol{L}\right)\in\mathscr{C}$,
the support of $\boldsymbol{K}\left[1:m\right]$ is connected. So that 
\[
K_{l}=L_{l}\ge1,K_{l+1}=L_{l+1}\ge1,\cdots,K_{m-1}=L_{m-1}\ge1,
\]
which implies $\left|\boldsymbol{K}\right|\ge m-l+1$ and $\left|\boldsymbol{L}\right|\ge m-l$.
Therefore, 
\begin{align*}
 & \left|\boldsymbol{K}+\boldsymbol{K}'+\boldsymbol{E}_{m}-\boldsymbol{E}_{l}\right|+\text{\ensuremath{\left|\boldsymbol{L}+\boldsymbol{L}'+\boldsymbol{E}_{m}-\boldsymbol{E}_{l}\right|}}\\
\ge & 2\left(m-l\right)+1+k'+2\left(l-m\right)\\
= & k'+1.
\end{align*}
\item $i_{\boldsymbol{K}}>l$. In this case one has $K_{l}=L_{l}=0$. Again
because $\left(\boldsymbol{K},\boldsymbol{L}\right)\in\mathscr{C}$,
the support of $\boldsymbol{K}\left[1:m\right]$ is connected, we have $K_{l-1}=L_{l-1}=0$.
Since $l<m$, it follows that 
\[
\delta_{l}\left(\boldsymbol{K}+\boldsymbol{E}_{m}\right)=\delta_{l}\left(\boldsymbol{L}+\boldsymbol{E}_{m}\right)=\delta_{l}\left(\boldsymbol{K}\right)=\delta_{l}\left(\boldsymbol{L}\right)=0.
\]
Then the coefficient of $\boldsymbol{\beta}^{\boldsymbol{K}+\boldsymbol{K}'+\boldsymbol{E}_{m}-\boldsymbol{E}_{l}}\bar{\boldsymbol{\beta}}^{\boldsymbol{L}+\boldsymbol{L}'+\boldsymbol{E}_{m}-\boldsymbol{E}_{l}}$
is 
\[
\delta_{l}\left(\boldsymbol{K}+\boldsymbol{E}_{m}\right)\delta_{l}\left(\boldsymbol{L}'\right)-\delta_{l}\left(\boldsymbol{L}+\boldsymbol{E}_{m}\right)\delta_{l}\left(\boldsymbol{K}'\right)=0.
\]
The term vanishes!
\end{itemize}
\item $m=N,l<N$. By noting that $\boldsymbol{E}_{N}=\boldsymbol{0}$, we
follow exactly the same discussion with the case $1\le m\le N-1,l<m$
and obtain that either the degree is greater or equal to $k'+1$ or
the term vanishes. 
\item $m=N,l=N$. Since $\left(\boldsymbol{K},\boldsymbol{L}\right)\in\mathscr{R}_{N-1}\setminus\mathscr{R}_{N}$,
we have $K_{N}\neq L_{N}$. Let us assume $K_{N}\neq0$. Again we
compare $i_{\boldsymbol{K}}$ and $N$.
\begin{itemize}
\item $i_{\boldsymbol{K}}<N$. Since the support of $\boldsymbol{K}$ is
connect. Then one has 
\[
K_{i_{\boldsymbol{K}}}=L_{i_{\boldsymbol{K}}}\ge1,\cdots,K_{N-1}=L_{N-1}\ge1.
\]
So that it holds $\left|\boldsymbol{K}\right|\ge N-i_{\boldsymbol{K}}+1$
and $\left|\boldsymbol{L}\right|\ge N-i_{\boldsymbol{K}}$. It follows
\begin{align*}
 & \left|\boldsymbol{K}+\boldsymbol{K}'-\boldsymbol{e}_{N}\right|+\text{\ensuremath{\left|\boldsymbol{L}+\boldsymbol{L}'-\boldsymbol{e}_{N}\right|}}\\
\ge & 2\left(N-i_{\boldsymbol{K}}\right)+1+k'-2\\
\ge & k'+1.
\end{align*}
\item $i_{\boldsymbol{K}}=N$. In this case only $K_{N},L_{N}$ survive.
According to the perturbation's structure \eqref{eq:H0}, such terms
only appear when the degree is at least $3$, that is, $\left|\boldsymbol{K}\right|+\left|\boldsymbol{L}\right|\ge3$.
So that we have 
\[
\left|\boldsymbol{K}+\boldsymbol{K}'-\boldsymbol{e}_{N}\right|+\text{\ensuremath{\left|\boldsymbol{L}+\boldsymbol{L}'-\boldsymbol{e}_{N}\right|}}\ge3+k'-2=k'+1.
\]
\end{itemize}
\end{itemize}
Summarizing the above discussion, we conclude that for $\left(\boldsymbol{K},\boldsymbol{L}\right)\in\mathscr{C}$ and $K_N = L_N = 0$ if $\left|\boldsymbol{K}\right|+\left|\boldsymbol{L}\right|\le2$,
\begin{equation}
\mathrm{ord}\left(\left\{ \boldsymbol{\beta}^{\boldsymbol{K}+\boldsymbol{E}_{m}}\bar{\boldsymbol{\beta}}^{\boldsymbol{L}+\boldsymbol{E}_{m}},f\right\} \right)\ge\mathrm{ord}\left(f\right)+1,\quad1\le m\le N.\label{eq:element_order_increase}
\end{equation}
Noting that by Proposition \ref{prop:full_sol_homological_eqn}, $g_{k}=\sum^{N}_{m=1}g_{k,m}$
and $g_{k,m}$ is the linear combination of 
\[
\boldsymbol{\beta}^{\boldsymbol{K}+\boldsymbol{E}_{m}}\bar{\boldsymbol{\beta}}^{\boldsymbol{L}+\boldsymbol{E}_{m}}
\]
with $\left(\boldsymbol{K},\boldsymbol{L}\right)\in\mathscr{R}_{m-1}\setminus\mathscr{R}_{m}$.
Moreover, such $\left(\boldsymbol{K},\boldsymbol{L}\right)$ belongs
to $\mathscr{E}\left(p_{k-1}\right)\subset\mathscr{E}\left(R_{k-1}\right)\subset\mathscr{C}$.
Thus the Lemma is proved by applying \eqref{eq:element_order_increase}. 

\section{Proof of Lemma \ref{claim:remainder_structure}}\label{sec:proof_remainder_structure}

Considering the outcome of 
\[
\left\{ \boldsymbol{\beta}^{\boldsymbol{K}+\boldsymbol{E}_{m}}\bar{\boldsymbol{\beta}}^{\boldsymbol{L}+\boldsymbol{E}_{m}},\boldsymbol{\beta}^{\boldsymbol{K}'}\bar{\boldsymbol{\beta}}^{\boldsymbol{L'}}\right\} _{\mathcal{S}},
\]
we apply the formula \eqref{eq:poly_action} and check every term
in the summation.
These terms are 
\begin{itemize}
\item For $1\le m\le N,1\le l\le N-1$,
\begin{equation}
\left(\delta_{l}\left(\boldsymbol{K}+\boldsymbol{E}_{m}\right)\delta_{l}\left(\boldsymbol{L}'\right)-\delta_{l}\left(\boldsymbol{L}+\boldsymbol{E}_{m}\right)\delta_{l}\left(\boldsymbol{K}'\right)\right)\boldsymbol{\beta}^{\boldsymbol{K}+\boldsymbol{K}'+\boldsymbol{E}_{m}-\boldsymbol{E}_{l}}\bar{\boldsymbol{\beta}}^{\boldsymbol{L}+\boldsymbol{L}'+\boldsymbol{E}_{m}-\boldsymbol{E}_{l}}.\label{eq:action_l<N-1}
\end{equation}
\item For $1\le m\le N,l=N$,
\begin{equation}
\left(\delta_{N}\left(\boldsymbol{K}\right)\delta_{N}\left(\boldsymbol{L}'\right)-\delta_{N}\left(\boldsymbol{L}\right)\delta_{N}\left(\boldsymbol{K}'\right)\right)\boldsymbol{\beta}^{\boldsymbol{K}+\boldsymbol{K}'+\boldsymbol{E}_{m}-\boldsymbol{e}_{N}}\bar{\boldsymbol{\beta}}^{\boldsymbol{L}+\boldsymbol{L}'+\boldsymbol{E}_{m}-\boldsymbol{e}_{N}}.\label{eq:action_l=00003DN}
\end{equation}
\end{itemize}

By comparing $m,l$ all terms in \eqref{eq:action_l<N-1} and \eqref{eq:action_l=00003DN} are discussed in the following cases.
\begin{itemize}
    \item $1\le m \le l \le N-1$. Then 
\[
\begin{array}{cccccccccc}
\text{Index} & 1 & \cdots & m-1 & m & \cdots & l-1 & l & \cdots & N\\
\boldsymbol{E}_{m}-\boldsymbol{E}_{l}: & 0 & \cdots & 0 & 1 & \cdots & 1 & 0 & \cdots & 0
\end{array},
\]
thus it is obvious that 
\[
\left(\boldsymbol{K}+\boldsymbol{K}'+\boldsymbol{E}_{m}-\boldsymbol{E}_{l},\boldsymbol{L}+\boldsymbol{L}'+\boldsymbol{E}_{m}-\boldsymbol{E}_{l}\right)\in\mathscr{C}.
\]
\item $m\le l,l=N$. Then \eqref{eq:action_l=00003DN} survive if at least
one of $\boldsymbol{K}_{N},\boldsymbol{L}_{N}$ is non-zero. If both
of them are non-zero, it is again easy to see that 
\[
\left(\boldsymbol{K}+\boldsymbol{K}'+\boldsymbol{E}_{m}-\boldsymbol{e}_{N},\boldsymbol{L}+\boldsymbol{L}'+\boldsymbol{E}_{m}-\boldsymbol{e}_{N}\right)\in\mathscr{C}.
\]
Now let us assume $\boldsymbol{K}_{N}\neq0,\boldsymbol{L}_{N}=0$.
Then \eqref{eq:action_l=00003DN} becomes
\[
c_{N}\delta_{N}\left(\boldsymbol{K}\right)\delta_{N}\left(\boldsymbol{L}'\right)\boldsymbol{\beta}^{\boldsymbol{K}+\boldsymbol{K}'+\boldsymbol{E}_{m}-\boldsymbol{e}_{N}}\bar{\boldsymbol{\beta}}^{\boldsymbol{L}+\boldsymbol{L}'+\boldsymbol{E}_{m}-\boldsymbol{e}_{N}},
\]
and one needs $\boldsymbol{L}'_{N}\neq0$ to make it alive. So that
\[
\left(\boldsymbol{K}+\boldsymbol{K}'+\boldsymbol{E}_{m}-\boldsymbol{e}_{N},\boldsymbol{L}+\boldsymbol{L}'+\boldsymbol{E}_{m}-\boldsymbol{e}_{N}\right)\in\mathscr{C}
\]
holds true. The case $\boldsymbol{K}_{N}=0,\boldsymbol{L}_{N}\neq0$
leads to the same result.
\item $l<m$. This case only appears in \eqref{eq:action_l<N-1}. Let $i_{\boldsymbol{K}}$ be the index such that $K_{j}=0,1\le j < i_{\boldsymbol{K}},K_{i_{\boldsymbol{K}}}\neq0$. Comparing $i_{\boldsymbol{K}}$ and $l$ yields the following sub-cases.
\begin{itemize}
\item $l<i_{\boldsymbol{K}}$. Then $K_{l}=L_{l}=0$. Similar to the case
in the proof of Claim \ref{claim:oder_increase}, one has $K_{l-1}=L_{l-1}=0$
and 
\[
\delta_{l}\left(\boldsymbol{K}+\boldsymbol{E}_{m}\right)=\delta_{l}\left(\boldsymbol{L}+\boldsymbol{E}_{m}\right)=\delta_{l}\left(\boldsymbol{K}\right)=\delta_{l}\left(\boldsymbol{L}\right)=0.
\]
The term \eqref{eq:action_l<N-1} vanishes!
\item $i_{\boldsymbol{K}}\le l$. In this case, the value of $\boldsymbol{K}', \boldsymbol{L}', \boldsymbol{K}+\boldsymbol{E}_{m}-\boldsymbol{E}_{l}$
and $\boldsymbol{L}+\boldsymbol{E}_{m}-\boldsymbol{E}_{l}$ are 
\[
\begin{array}{cccccccc}
\text{Index} & \cdots & l-1 & l & \cdots & m-1 & m & \cdots\\
\boldsymbol{K}+\boldsymbol{E}_{m}-\boldsymbol{E}_{l}: & \cdots & K_{l-1} & K_{l}-1 & \cdots & K_{m-1}-1 & K_{m} & \cdots\\
\boldsymbol{L}+\boldsymbol{E}_{m}-\boldsymbol{E}_{l}: & \cdots & K_{l-1} & K_{l}-1 & \cdots & K_{m-1}-1 & L_{m} & \cdots \\
\boldsymbol{K}': & \cdots & K'_{l-1} & K'_l & \cdots \\
\boldsymbol{L}': & \cdots & L'_{l-1} & L'_l & \cdots
\end{array}.
\]
Assume the term \eqref{eq:action_l<N-1} exists. By the fact that $\left(\boldsymbol{K}, \boldsymbol{L}\right) \in \mathscr{C}$, one has clearly that 
\[
K_j-1 \ge 1, \quad l\le j \le m-1
\]
so that 
\[
supp\left(\boldsymbol{K}+\boldsymbol{K}'+\boldsymbol{E}_{m}-\boldsymbol{E}_{l}  + \boldsymbol{L}+\boldsymbol{L}'+\boldsymbol{E}_{m}-\boldsymbol{E}_{l} \right)
\]
is connected.
If 
\[
\left(\boldsymbol{K}+\boldsymbol{K}'+\boldsymbol{E}_{m}-\boldsymbol{E}_{l},\boldsymbol{L}+\boldsymbol{L}'+\boldsymbol{E}_{m}-\boldsymbol{E}_{l}\right)\in\mathscr{R}_{N},
\]
 it is clear that $\left(\boldsymbol{K}+\boldsymbol{K}'+\boldsymbol{E}_{m}-\boldsymbol{E}_{l},\boldsymbol{L}+\boldsymbol{L}'+\boldsymbol{E}_{m}-\boldsymbol{E}_{l}\right)\in\mathscr{C}$ by definition. If
\[
\left(\boldsymbol{K}+\boldsymbol{K}'+\boldsymbol{E}_{m}-\boldsymbol{E}_{l},\boldsymbol{L}+\boldsymbol{L}'+\boldsymbol{E}_{m}-\boldsymbol{E}_{l}\right)\in\mathscr{R}_{\tilde{m}-1}\setminus\mathscr{R}_{\tilde{m}}
\]
where $1\le \tilde{m} \le N$, an observation is that $K'_l = L'_l, K'_{l-1} = L'_{l-1}$ can not happen simultaneously otherwise the term \eqref{eq:action_l<N-1} vanishes. Then one must have $\left(\boldsymbol{K}', \boldsymbol{L}'\right) \in \mathscr{R}_{m'-1}\setminus\mathscr{R}_{m'}$ for some $1\le m'\le l$. Since $l<m$, actually it holds that $\tilde{m}=m'\le l < m$. Because $\left(\boldsymbol{K}, \boldsymbol{L}\right), \left(\boldsymbol{K}', \boldsymbol{L}'\right) \in \mathscr{C}$, we have 
\[
K_j \neq 1, K'_j\neq 1, \quad 1\le j \le \tilde{m}-1.
\]
Therefore we conclude that 
\[
\left(\boldsymbol{K}+\boldsymbol{K}'+\boldsymbol{E}_{m}-\boldsymbol{E}_{l},\boldsymbol{L}+\boldsymbol{L}'+\boldsymbol{E}_{m}-\boldsymbol{E}_{l}\right)\in\mathscr{C}.
\]
\end{itemize}
\end{itemize}
Thus the Lemma is proved by summarizing all the discussion above.
\bibliographystyle{plain}
\bibliography{mybib}
\end{document}